\newcommand{\de}{\delta}
\newcommand{\ca}{\xi}
\newcommand{\De}{\Delta}
\newcommand{\spr}{{\text{\tt spr}}}
\newcommand{\damage}{{\text{\tt damage}}}
\newcommand{\bR}{{\mathbb R}}
\newcommand{\bZ}{{\mathbb Z}}
\newcommand{\cC}{{\mathcal C}}
\newcommand{\cF}{{\mathcal F}}
\newcommand{\cN}{{\mathcal N}}
\newcommand{\cM}{{\mathcal M}}
\newcommand{\cP}{\mathcal P}
\newcommand{\cS}{\mathcal S}
\newcommand{\cE}{\mathcal E}
\renewcommand{\P}{{\mathbb P\/}}
\newcommand{\E}{{\mathbb E\/}}
\newcommand{\lsquare}{\text{\scalebox{2}[1]{$\square$}}}
\newcommand{\ind}{\mathbbm{1}}
\DeclareMathOperator{\mmod}{mod}
\newcounter{mycount}
\newtheorem{theorem}{Theorem}[section]
\newtheorem{prop}[theorem]{Proposition}
\newcounter{example}
\newcounter{open}
\newcounter{figno}
\newcounter{tableno}
\numberwithin{equation}{section}
\numberwithin{figure}{section}
\numberwithin{table}{section}
\begin{document}
\pagestyle{empty}

\null

\begin{center}\Large
{\bf  Stability 
of cellular automata 
trajectories
revisited: \\branching walks and Lyapunov profiles}
\end{center}

\begin{center}

{\sc Jan M.~Baetens}\\
{\rm KERMIT}\\
{\rm Department of Applied Mathematics, Biometrics and Process Control}\\ 
{\rm Ghent University}\\
{\rm Coupure links 653, Gent, Belgium}\\
{\tt jan.baetens{@}ugent.be}\\
\end{center}

\begin{center}
{\sc Janko Gravner}\\
{\rm Mathematics Department}\\
{\rm University of California}\\
{\rm Davis, CA 95616, USA}\\
{\rm \tt gravner{@}math.ucdavis.edu}
\end{center}

\begin{center}
{
April 11, 2016}
\end{center}

\vspace{0.3cm}

\paragraph{Abstract.}
We study non-equilibrium defect accumulation 
dynamics on a cellular automaton trajectory: a branching walk
process in which a defect creates a successor on any neighborhood site 
whose update it affects. On an infinite lattice, defects 
accumulate at different exponential rates in different 
directions, giving rise to the Lyapunov profile. 
This profile quantifies instability
of a cellular automaton evolution and is connected to the theory of
large deviations. We rigorously and empirically study Lyapunov profiles 
generated from random initial states. We also introduce explicit and computationally 
feasible variational methods to compute the Lyapunov profiles
for periodic configurations, thus developing an analogue of
Floquet theory for cellular automata.

\vskip0.3cm

\noindent 2010 {\it Mathematics Subject Classification\/}: 60K35, 37B15.

\vskip0.3cm

\noindent {\it Key words and phrases\/}: asymptotic shape,  branching walk,
cellular automaton, doubly periodic configuration, large deviations,
Lyapunov exponent, percolation, stability.

\newpage

\addtocounter{page}{-1}

\pagestyle{headings}
\thispagestyle{empty}


\section{Introduction}\label{intro}

{
Quantifying instability in physical systems and in mathematical models 
is a long-standing problem in nonlinear science, beginning with Lyapunov's pioneering work at the 
end of 19th century. Lyapunov discovered that the basic quantities are exponential 
rates which, when positive, measure divergence from an unstable trajectory. In this paper, we
elaborate on the well-known fact that instabilities often do not 
affect all components of a system to the same extent; more precisely, we  
study how fast defects may spread among these components, which we assume are spatially 
distributed. In the process, we establish connections 
with large deviation theory, a branch of probability theory that studies exponentially small
probabilities of ``rare'' events that do not conform to the ``typical'' scenario. 
In our models, the defects accumulate in space as a system of random walks, whose 
large deviation rates then determine Lyapunov instability. 
This point of view is not only useful 
when the dynamics starts at a random initial state, but also in periodic states with no 
randomness at all. 

While our approach could work for other many-component systems, we chose 
{\it cellular automata\/} ({\it CA\/}) as our platform. These deterministic dynamical systems are 
spatially and temporally discrete, with a fixed local update rule that
mandates that a new
state at the next tick of a clock depends only on a finite number of neighboring states. In addition, 
each spatial location (playing the role of a component or a degree of freedom) 
can be occupied with one of only finitely many states --- for simplicity, 
we will only consider {\it binary\/} CA, in which a site either takes the state $0$ or the state $1$. 
This setting minimizes technical considerations, which however remain a considerable 
challenge. It also facilitates the development of a computational approach, which has become
an indispensable element of stability research in many fields, but is particularly well-suited 
for CA. Consequently, our conclusions are based both on large-scale calculations and on rigorous mathematical arguments, 
the latter largely probabilistic. 

Let us consider a binary CA that is evolved from an initial state $\ca_0$, 
generating a trajectory $\ca_t$, $t=0,1,\ldots$. For instance, the CA known as 
{\it Rule 22\/} or {\it Exactly 1\/} \cite{GG4}, whose sites are integers in $\bZ$, 
is governed by the update rule dictating that
the state at $x\in \bZ$ is $1$ at time $t\ge 1$ if and only if exactly one of states at its three 
neighboring sites $x-1$, $x$ and $x+1$ was $1$ at the previous time step. 
How stable is a CA trajectory?} By analogy with continuous dynamical systems, 
the idea is to measure the effect of a small perturbation of $\ca_0$ on the evolution 
at later times. In his classic work, Wolfram \cite{Wol1} considered damage spreading, 
that is, growth of the set of affected sites in $\ca_t$ when a few sites in $\ca_0$ are flipped.  
In one dimension, there are two directions of propagation; when the maximum 
extent of damage progresses linearly the two slopes are called {\it Lyapunov exponents\/} 
as they measure the exponential divergence in distance between the original and 
perturbed states in the appropriate metric. This concept was developed further 
from computational and theoretical perspectives in
\cite{Gra1, Gra2, She, CK, FMM, Tis1, Tis2}.  

Damage spreading is possibly the simplest approach but it gives no indication on the 
rate of divergence within a bounded region; in particular it has nothing to say 
on the CA evolution on finite sets. Thus a different tool was introduced 
by Bagnoli et al.~\cite{BRR}, based on the fact that Lyapunov exponents in continuous 
dynamical systems can also be given
locally through the eigenvalues of the governing Jacobian. The Boolean derivative introduced in 
\cite{Vic1} is used in \cite{BRR} as the analogue for the Jacobian, which leads to the {\it branching
walk\/} dynamics of defects that we now informally describe. Recall that a trajectory $\ca_t$ of 
a CA is fixed. Assume a defect is present 
at a site $y$ at time $t$. That defect looks into each of its neighborhood sites $x$ to check whether
flipping the state $\ca_t$ at $y$ would produce a different state at $x$ than assigned by $\ca_{t+1}$; 
if so, the defect produces a successor at $x$. Each defect may produce more than one successor
(hence the term ``branching'') and acts independently of other defects. The exponential rate 
of accumulation of such defects is called the {\it maximal Lyapunov exponent\/} ({\it MLE\/}). 
The authors of \cite{BRR} envision this as an {\it equilibrium\/} theory: they 
measure the accumulation on a finite circle of sites after a long time (much larger than 
the length of the circle) has elapsed. Due to the resulting spatial translation invariance, 
there is only one rate of accumulation, and the meaning of the word {\it maximal\/} is unclear, 
except to distinguish the notion from the one arising from damage spreading; however, the
present setting provides an  {\it ex post facto\/} justification of this term. 

In this paper we continue the study initiated in 
\cite{BG} of the {\it non-equilibrium\/} version of defect branching walk 
dynamics. As the defects spread on an {\it infinite\/} lattice, there is substantial 
spatial variation in their accumulation; the exponential rates of spread in all space-time 
directions are collected into a function we call the {\it Lyapunov profile\/}.
{
For example, a one-dimensional Lyapunov profile 
$L=L(\alpha)$ roughly gives, for a real number $\alpha$, the exponential 
rate of accumulation on the line $x=\alpha t$ (see Fig.~\ref{intro-figure} 
for a few examples, including {\it Rule 22\/}). There is some conceptual similarity between this 
object and} the Lyapunov spectrum in multidimensional smooth dynamical systems, 
whereby the spectrum of the Jacobian accounts for 
perturbations in all directions in both the input and the output. In the discrete 
CA configuration space there is essentially one way to make an infinitesimal perturbation 
in the input (assuming irreducibility), but the
effect can be quite different in different directions of the output. Moreover, we 
empirically observe that typically 
the direction with the maximal effect has the profile height 
that is close to the MLE of \cite{BRR}.  

We emphasize that the dynamics of branching defects does not alter the 
trajectory $\ca_t$ but instead uses it as an environment for its evolution. It 
is thus a kind of second-class dynamics akin to the ones that percolate 
and create periodic structures in \cite{GH}, and to the ``slave'' 
synchronization rules of \cite{BER}. In fact, the set of sites that contain
at least one defect 
evolves as a four-state CA, which we refer to as the {\it defect percolation\/} CA, and which 
is conceptually very similar to the rules studied in \cite{GH}.
One property that substantially facilitates 
the analysis is that our dynamics are monotone --- adding 
defects only results in more of them later on --- a property that fails to 
hold for Wolfram's damage spreading. We call the asymptotic rate of defect spread,  
typically equal to the set on which the Lyapunov profile 
differs from $-\infty$, the {\it defect shape\/}. The Lyapunov profiles thus 
simultaneously provide information on the spatial reach
and local accumulation resulting from a defect perturbation. The defect shape does 
not have an a priori relation to the (appropriately scaled) damaged set; 
as we will see in Section~\ref{prelim-defdam}, it can be larger or smaller.  

The most important initial state $\ca_0$ for the CA analysis is the uniform 
product measure, that is, one in which the probability of a $0$ or a $1$ at 
any site is independently $1/2$. Indeed, this random configuration is, in a way, 
one in which all configurations are equally likely. The 
trajectory $\ca_t$ then determines a space-time 
random field for defect dynamics, resulting in a branching {\it random\/} walk process. 
The study of such processes in an {\it independent\/} space-time random 
environment (e.g., \cite{Big, BNT}) is a well-established subfield of the large deviations theory
\cite{DZ, RS}. The main idea is that the resulting profiles are given 
by a variational method: the process seeks the most advantageous option for accumulation 
at a spatial location; in general, the search space can have a
very high dimension. Our defect accumulation dynamics evolves in 
a highly correlated random field, even when 
the uniform product measure is invariant \cite{GH},
and extending large deviation techniques is an extraordinary challenge. 
We thus rely mostly on empirical methods to analyze nontrivial cases
with random initialization. Notably, we observe that detectable dependence 
of MLE on the initial CA density is connected to the dramatic advantage 
of the defect percolation as compared to the damage spreading.

The other extreme are spatially periodic initial states, which after 
a transient ``burn-in'' time interval must become also temporally periodic. 
Study of the stability 
of periodic solutions of dynamical systems also has a long history, and 
is known as {\it Floquet theory\/}; see e.g.~\cite{Moo} for a recent perspective.
We are able to develop a fairly complete analogue for CA dynamics, based 
on large deviations for finite Markov chains \cite{DZ, RS}. These methods 
work particularly well under the irreducibility assumption, in which 
case the Lyapunov profile is given by a one-dimensional variational 
problem. We give several examples in Section~\ref{periodic-profiles}, 
including the profile for the {\it Rule 110\/} ether \cite{Coo}.  
We also introduce direct methods to determine the defect 
shape, related to convex transforms 
that originate from crystallography. 

Lyapunov profiles encapsulate a lot of information on the stability 
of CA trajectory, but not all of it. For example, many rules, such as {\it Rule 22\/}, 
develop holes in the set of defect sites (see Fig.~\ref{intro-figure})
due to stable updates, that is, configurations whose updates are 
insensitive to perturbations at a single site. Thus the {\it defect density profile\/}, 
a function that gives 
the density of defect sites in a given space-time direction, is of interest. Although there is no
known a priori reason, density 
profiles of CA trajectories are typically constant on their support \cite{GG4, GG5}; 
we observe the same here (see Fig.~\ref{damage-defect-figure}), 
although non-constant density profiles do
occur, for example, due to reducibility (e.g., {\it Tot 7\/} example in 
Fig.~\ref{ex1shapes-examples}).

\begin{figure}[!ht]
\begin{center}
\includegraphics[trim=0cm 0cm 0cm 0cm, clip, width=9cm]{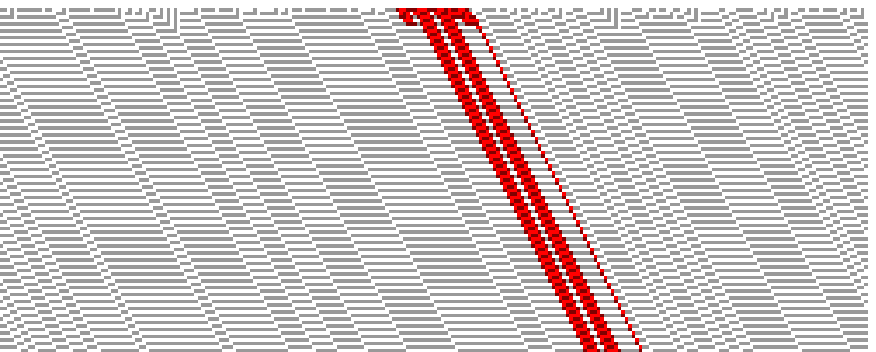}
\hspace{0.2cm}
\includegraphics[trim=0cm 0.2cm 0cm 0cm, clip, width=6cm]{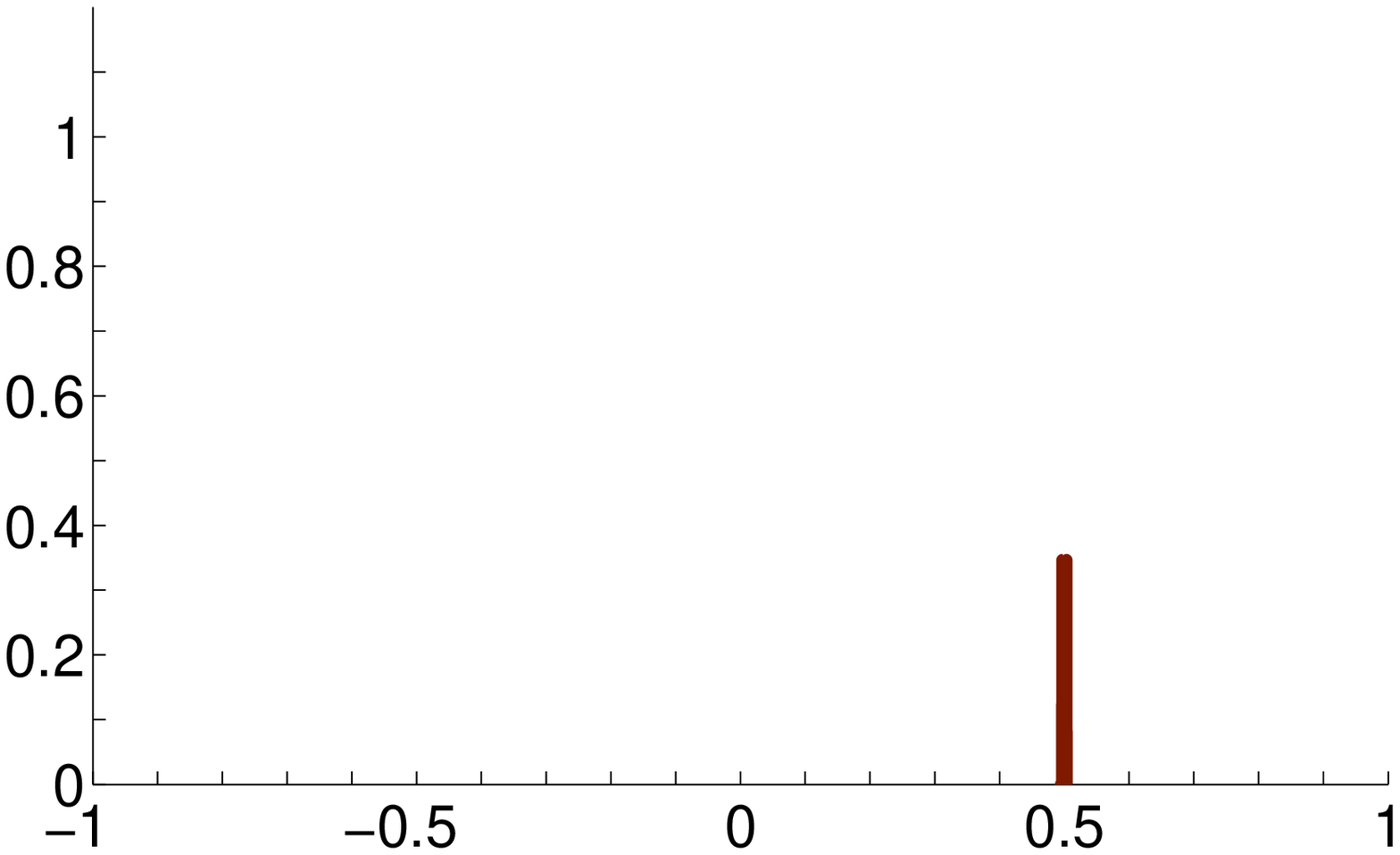}\\
\vspace{0.5cm}
\includegraphics[trim=0cm 0cm 0cm 0cm, clip, width=9cm]{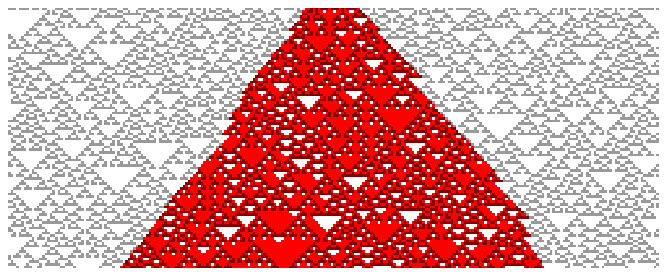}
\hspace{0.2cm}
\includegraphics[trim=0cm 0.2cm 0cm 0cm, clip, width=6cm]{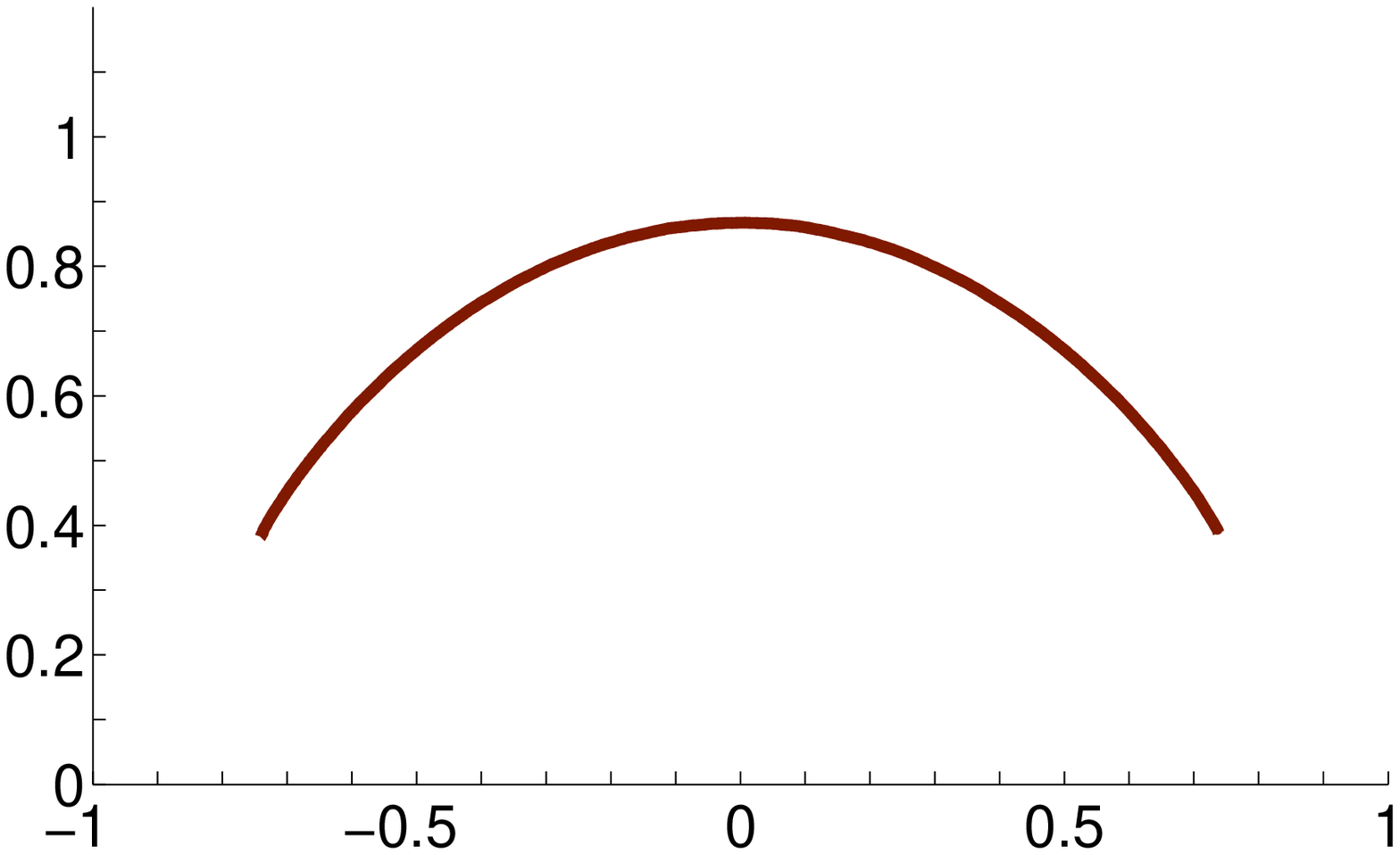}\\
\vspace{0.5cm}
\includegraphics[trim=0cm 0cm 0cm 0cm, clip, width=9cm]{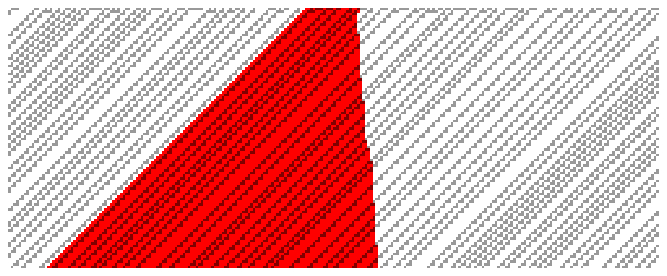}
\hspace{0.2cm}
\includegraphics[trim=0cm 0.2cm 0cm 0cm, clip, width=6cm]{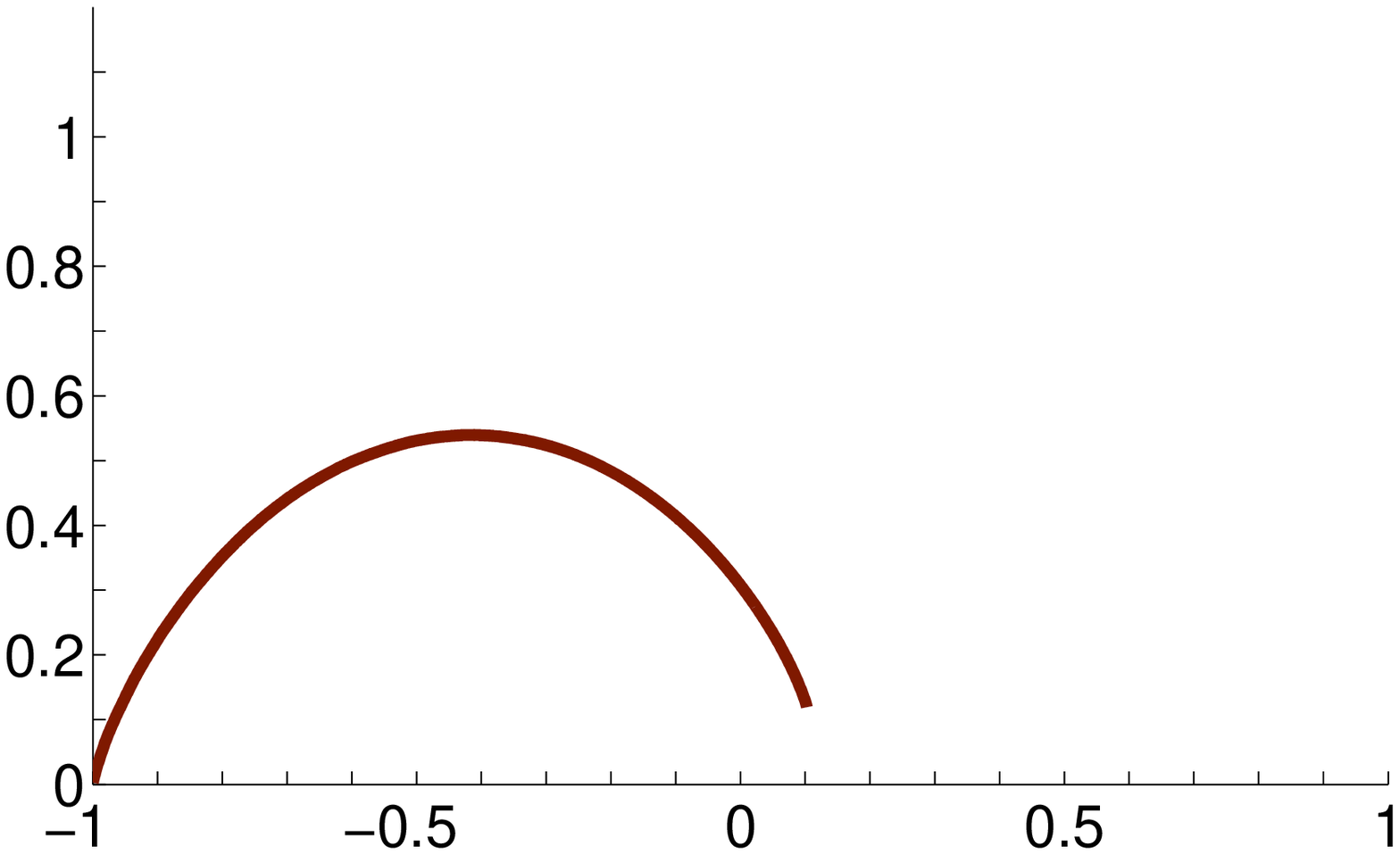}\\
\vspace{0.5cm}
\includegraphics[trim=0cm 0cm 0cm 0cm, clip, width=9cm]{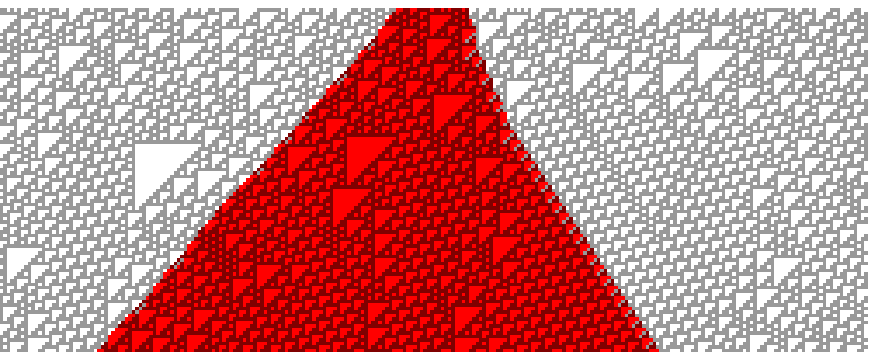}
\hspace{0.2cm}
\includegraphics[trim=0cm 0.2cm 0cm 0cm, clip, width=6cm]{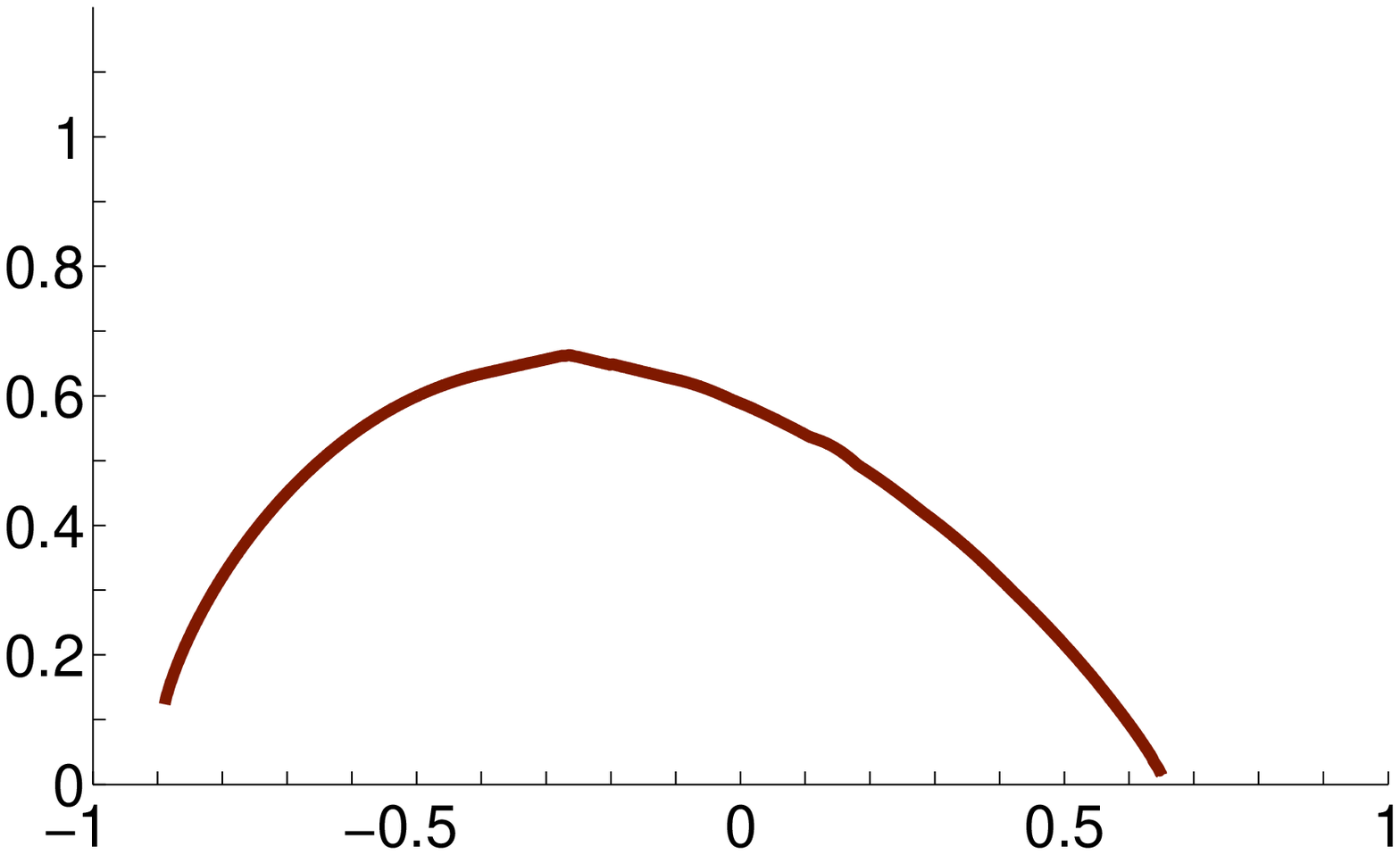}
\end{center}
\caption{ Evolution of defect percolation CA up to time $100$ {
(red sites are those 
that contain at least one defect)} and empirical 
Lyapunov profiles
at time $10^5$ 
for rules {\it 7\/},
{\it 22\/}, {\it 38\/}, and {\it 110\/}.} {
These profiles encapsulate the exponential accumulation rate vs.~space-time direction.}\label{intro-figure}
\end{figure}

We give formal definitions and some general results in Section~\ref{prelim}, after which
we focus on {\it elementary\/} CA \cite{Wol1}, the $256$ one-dimensional, three-neighbor
rules that have long been considered the primary testing ground for 
any CA theory, including stability analysis (e.g., \cite{BRR}). 
Due to their wide acceptance, we use the Wolfram's serial 
numbers \cite{Wol1} as nomenclature; the web site
\cite{Wol2} is particularly useful for a quick reference. 
Section~\ref{ECA-section} includes a comprehensive discussion on elementary CA
defect dynamics from the uniform product initial state. We also consider two-dimensional 
rules (Sections~\ref{2d-section} and~\ref{periodic-shapes}), where we restrict to 
{\it totalistic\/} rules whose update only depends on the neighborhood count. 


We conclude this section with a few illustrative examples and a brief discussion 
on how our approach relates to other complexity measures of CA rules. 
Fig.~\ref{intro-figure} depicts 
a sample defect percolation evolution for 
four elementary CA, together 
with approximate Lyapunov profiles\footnote{All of our many graphs 
of Lyapunov profiles are plots of the exponential accumulation rate vs.~space-time 
direction (see (\ref{lyapunov-profile-def})
for the formal definition); thus we omit the axis labels.}.
In all cases 
$\ca_0$ is the uniform product measure and the initial set of 
defects is an interval of $21$ sites. 
The first example is {\it Rule 7\/}, one of many rules
with degenerate profiles that 
are typically caused by persistent moving obstacles that defects cannot cross
(see Table~\ref{ECA-marginal}). 
Next is {\it Rule 22\/},
a classic chaotic rule for which it appears, at first glance, that 
the defect percolation has the same asymptotics as damage spreading \cite{Gra2}, but we 
will present evidence that this is not the case (see Fig.~\ref{damage-defect-figure}).
Next, {\it Rule 38\/} is the simplest
{\it stripes\/} rule (see Section~\ref{prelim-defdam})
in which the initial state creates a quenched random environment
for the branching walks, and thus the dynamics is conceptually similar
to one on a random tessellation \cite{BD}. Much about the resulting dynamics can be 
proved (see Prop.~\ref{rule-38}). The final example is 
{\it Rule 110\/}, which famously creates a periodic ether for interaction 
between various types of gliders \cite{Coo}. Whether the density of gliders
approaches zero is unknown (see \cite{LN} for positive evidence), 
and thus it is even less clear whether the Lyapunov 
profile approaches the one obtained by {\it starting\/} from the ether. The 
latter profile can be characterized by an explicit variational formula 
(see Section~\ref{periodic-profiles-rule110}).  

There have been many attempts to classify CA through complexity, 
going back to \cite{Wol1}; see \cite{MSZ, Mar, ZV} for recent reviews. 
The original Wolfram classification into four classes, {\it uniform\/}, 
{\it periodic\/}, {\it chaotic\/} and {\it complex\/}, often simply referred 
to by numerals 1--4, is still 
in wide use \cite{Mar}, despite considerable ambiguity in many interesting cases
\cite{MSZ}; for example, the intriguing {\it Rule 106\/}
(or its edge version {\it EEED\/} \cite{GG5})
could be called chaotic or complex. 
Much of the literature has attempted
to condense the complexity properties of a CA rule into a single 
number, although it is unclear if a linear ordering of 
CA by complexity provides the most insight; see \cite{ZV} and \cite{Mar} 
for some ``competing'' measures and resulting classifications. Our paper 
underscores this point by instead assigning a function 
to every CA rule, as in the right panels of Figure~\ref{intro-figure}. 
This leads to a natural division of rules into three classes, which 
can in the case of elementary CA be described as follows: 
{\it collapsing\/} rules for which the defects die out (e.g., rules {\it 0\/} and {\it 40\/}); 
{\it marginal\/} rules whose 
Lyapunov profile is a 
single ``stick,'' as is for {\it Rule 7\/} in Figure~\ref{intro-figure}; 
and {\it expansive\/} rules which generate exponential accumulation of defects  
on a linearly growing set, as in the other three cases 
of Figure~\ref{intro-figure}.  

One may intuitively expect that 
rules that have, by some measure, large complexity are the
expansive ones. As tends to be the case, this rule of thumb is 
useful, but is not a perfect predictor, as we now briefly 
illustrate on elementary CA.  
To be definite, we use Table 2 in \cite{Mar} 
as Wolfram's classification. The 8 uniform (class 1) rules 
are exactly the 8 rules in Table~\ref{ECA-collapsing} 
and are therefore all collapsing. At the 
other extreme, the 11 chaotic (class 3) and 4 complex (class 4) rules are all expansive (see 
Table~\ref{ECA-expansive}). However, there are expansive elementary CA which 
are classified as periodic (class 2). These are the stripes rules (of which 
{\it Rule 38\/} from Figure~\ref{intro-figure} is an example), which are 
in a sense in their own class: expansive but simple enough to be at least partly amenable to 
mathematical analysis. On the other side, there are two marginal rules, {\it 73\/} and
{\it 94\/}, that are sometimes classified as periodic \cite{Mar} and sometimes 
as complex \cite{ZV}, and stand out 
in our analysis as well in that the height of their profile is unusually difficult to 
estimate. Finally, the three additional collapsing rules identified in Table~\ref{ECA-mystery} 
are just barely such, as discussed in Section~\ref{ECA-subsection-dendep}. 
The exceptional rules mentioned in this paragraph --- among which the remaining eight
glider rules in Table~\ref{ECA-mystery} can also be counted ---  are all worth further study. 
   

\section{Definitions and basic results} \label{prelim}

In this paper we only consider binary CA, leaving the discussion of 
larger state spaces to our subsequent work. Thus, our object of study is 
a {\it cellular automaton\/} on the $d$-dimensional integer lattice 
$\bZ^d$ with state space $\{0,1\}$ that is
given by the finite ordered {\it neighborhood\/} $\cN\subset \bZ^d$
and the ({\it local\/}) {\it update function\/} of $|\cN|$ variables: 
$\phi:\{0,1\}^{|\cN|}\to \{0,1\}$. 
For a string $\vec s\in \{0,1\}^{|\cN|}$ we also write $\vec s\mapsto s'$ instead 
of $\phi(\vec s)=s'$. 
We call an update $\vec s\mapsto s'$ {\it stable\/} if $\vec s_1\mapsto s'$ 
for every $\vec s_1$ that differs from $\vec s$ in only one state.

\newcommand{\change}{\text{\tt change}}
 
The neighborhood of a point 
$x\in \bZ^d$ is the translation $\cN_x=x+\cN$, {
ordered 
the same way as $\cN$}. 
The {\it global function\/} $\Phi:\{0,1\}^{\bZ^d}\to \{0,1\}^{\bZ^d}$ is given
{
as follows. For arbitrary $\eta\in \{0,1\}^{\bZ^d}$, and $x\in \bZ^d$, 
let $\eta|_{\cN_x}$ be the 
vector of $|\cN|$ entries given by values of $\eta$ on $\cN_x$, listed in the 
order of sites in $\cN_x$. The function $\phi$ applied to this vector
provides the value of $\Phi(\eta)$ at $x$; in symbols,  
 $$\Phi(\eta)(x)=\phi(\eta|_{\cN_x}).$$ }

We denote by $\ca_t(x)=\ca(x,t)$, $x\in \bZ^d$, $t\in \bZ_+$, 
a trajectory of the CA, starting from a fixed 
initial state $\ca_0$, which can be deterministic or random.
That is, $\ca_t$ is defined recursively by iteration of $\Phi$: $\ca_{t+1}=\Phi(\ca_t)$ for $t\ge 0$.

\subsection{Lyapunov profiles}\label{prelim-lp}

We begin by defining the
branching walk dynamics that measures propagation of perturbations; see 
e.g.~\cite{Big, BNT} for probabilistic analysis of branching random walk. 
The defect configuration
$\De_t(x)=\De(x,t)\in\bZ_+$ describes the distribution of defects. Informally, 
for every $x\in \bZ^d$, $y\in \cN_x$, and every defect counted into $\De_t(y)$,  
$\De_{t+1}(x)$ is increased by 1 if applying the CA rule on the configuration $\ca_t$
that is perturbed at $y$ results in a perturbation at $x$.
 
More formally, for a configuration $\eta\in \{0,1\}^{\bZ^d}$, and
$y\in \bZ^d$, the perturbation of $\eta$ at $y$ is
the configuration $\eta^{(y)}$ defined by 
$$
\eta^{(y)}(x)=
\begin{cases}
1-\eta(y) &x=y\\
\eta(x) &\text{otherwise}
\end{cases}
$$
Further, $\change_t$ collects the information about effects of perturbations 
at time {
 $t$, and is essentially the Boolean derivative \cite{Vic1},}
$$
\change_t(y,x)=\ind(\Phi(\ca_t^{(y)})(x)\ne \ca_{t+1}(x)).
$$
{
(Here, $\ind$ is the indicator function, which gives the value 
$1$ or $0$ whenever its logical argument is true or false, respectively.)} Then,  
$$
\De_{t+1}(x)=\sum_{y\in \cN_x} \change_t(y,x)\De_t(y).
$$
Again, $\De_0$ is a fixed configuration, which we will always assume is 
nonzero with (possibly large) finite support. We call $(\ca_t, \De_t)$ the {\it 
defect accumulation dynamics\/}, {
matching the definition in \cite{BRR}}.

The configuration $\de_t$
given by $\de_t(x)=\ind(\De_t(x)>0)$ induces a CA evolution $(\ca_t, \de_t)$, 
which we call the {\it defect percolation\/} CA. In this four-state rule, 
a defect at $y$
spreads into a neighboring site $x$ if a change of the state of $\ca_t$ 
at $y$ affects the state at $x$ at the next time step.
Therefore, $\de_t$ is an oriented percolation dynamics on the original space-time CA configuration 
$\ca_t$; it is affected by the original CA evolution, leaving it unaffected in return.
Thus it plays a similar role to the percolation process in \cite{GH} that 
governs disorder-resistance. Another example are the ``second-class'' or ``slave'' 
processes that control synchronization in \cite{BER}. As convenient, we often interpret
$\de_t$ as subset of $\bZ^d$, determined by its support. 

We define the {\it 
Lyapunov profile\/} to be the function $L:\bR^d\to \{-\infty\}\cup[0, \infty)$ given for 
$\alpha \in \bR^d$ by 
\begin{equation}\label{lyapunov-profile-def}
L(\alpha)=\lim_{\epsilon\downarrow 0} 
\limsup_{t\to\infty} \frac 1t\log\left(\sum_{x:||x/t-\alpha||<\epsilon}\De(x,t)\right),
\end{equation}
where the norm is Euclidean (or, equivalently, any other). Informally, 
$$
\De(t \alpha , t)\approx e^{L(\alpha)t}, 
$$
so that in the space-time direction $\alpha$ the defects accumulate at the exponential 
rate $L(\alpha)$. We call the Lyapunov profile $L$ {\it proper\/} if replacing 
$\limsup$ with $\liminf$ in (\ref{lyapunov-profile-def}) results in the same limit $L(\alpha)$ 
for all $\alpha$. 

It is easy to see that the limit in (\ref{lyapunov-profile-def})
exists as either a nonnegative finite number or $-\infty$, 
and that one may replace the sum 
with maximum. It is also clear that $L(\alpha)=-\infty$ when {
$\alpha$ is outside $\text{co}(\cN)$, the convex hull of $\cN$. Further,
$L(\alpha)\le \log |\cN|$ for all $\alpha$ and $L$ is upper semicontinuous.}  
The {\it maximal Lyapunov exponent\/} ({\it MLE\/}) is then defined to be
\begin{equation}\label{MLE-def}
\lambda=\max_\alpha L(\alpha).
\end{equation}
An $\alpha$ at which the maximum in (\ref{MLE-def}) is achieved
is called a {\it MLE direction\/}, and is a space-time direction with the 
fastest growth of the number of defects. 
See \cite{BRR} for a different definition of the MLE, and \cite{BD} for 
a discussion in a more general context. 
We empirically observe that our concept of MLE is close to that of \cite{BRR} when 
the initial state is uniform product measure.

A binary CA is {\it additive\/} when the local map $\phi$ adds all its arguments 
modulo $2$.  In this case the Lyapunov profile is proper and independent of 
$\ca_0$ and $\De_0$. 
As it depends only on the neighborhood $\cN$, we denote the resulting Lyapunov profile by $L_\cN$. 
By elementary large deviations 
\cite{DZ, RS}, we can give it as a variational formula. For $y\in \bR^d$, let
$$
\Lambda(y)=\sum_{x\in \cN}\exp(\langle y, x\rangle).
$$
Then $L_\cN$ is given by the Legendre transform 
$$
L_\cN(\alpha)=\inf_{y}\left(-\langle y, \alpha\rangle + \Lambda(y)\right).
$$
Furthermore, $\lambda(\alpha)=\log|\cN|$ with the unique MLE direction given by the average of $\cN$:
$
|\cN|^{-1}\sum_{x\in \cN} x. 
$
For example, {\it Rule 150\/} is the one-dimensional additive CA with $\cN=\{0, \pm 1\}$ 
and has 
$$
L(\alpha)=L_\cN(\alpha)=\log\left(1+\alpha_0+\frac{1}{\alpha_0}\right)-\alpha\log\alpha_0, \text{ where }
\alpha_0=\frac{\alpha+\sqrt{4-3\alpha^2}}{2(1-\alpha)},
$$
with MLE $\lambda=\log 3$ and MLE direction $0$. Clearly, for any CA with neighborhood $\cN$, 
and any initialization $\ca_0$ and $\De_0$, 
$$
L(\alpha)\le L_\cN(\alpha).
$$
In this sense, the additive CA are the most unstable.

We also remark that, for additive rules, the theorem due to Badahur and Rao (see Section 
3.7 of \cite{DZ}) implies that for a fixed $\epsilon$ the $t$-limit in (\ref{lyapunov-profile-def})
exists and the convergence rate is $\mathcal O(t^{-1}\log t)$. Periodic cases 
(Section~\ref{periodic-profiles}) and chaotic rules with strong mixing 
properties (e.g., rules {\it 22\/}, {\it 30\/},
and {\it 106\/} among elementary CA)
appear to exhibit 
similarly fast convergence, while many other cases progress more slowly 
due to the fact that $\xi_t$ itself does so. In our empirical Lyapunov 
profile plots from random initial states, we choose $t=10^5$ and $\epsilon=4\cdot 10^{-3}$; 
we do not add the huge numbers of defects using exact integer arithmetic but 
instead use double precision to compute their logarithms using this formula for $0<B\le A$: 
$$
\log(A+B)=\log A+\log(1+\exp(\log B-\log A)).
$$

\subsection{Density profiles and defect shapes}\label{prelim-density}

Due to stable updates, the set of defect sites often has holes that are 
invisible in the Lyapunov profile $L$. To capture this information,
we introduce the function $\rho=\rho(\alpha)$ that 
gives the proportion of defect sites in the direction $\alpha\in \bR^d$, that is, on the rays $x=\alpha t$. 
Formally, we call $\rho$ the {\it defect density profile\/} if,
as $T\to\infty$, the measures given by properly scaled point-masses 
at $x/t$, for $(x,t)$ with $t\le T$ and $\de(x,t)=1$, 
converge to $\rho$ in the following sense:
\begin{equation}\label{density-profile-def}
\frac{2^d}{T^{d+1}}\sum_{(x,t):t\le T, \de(x,t)=1}\psi(x/t)\xrightarrow[T\to\infty]{} \int_{\bR^d} \rho(\alpha)\psi(\alpha)\,d\alpha,
\end{equation}
for any test function $\psi\in \cC_c(\bR^d)$. (Note that this convergence is in the weak$^*$-topology
used in functional analysis.) The scaling is chosen so that, 
when $\de\equiv 1$, $\rho\equiv 1$.  
See \cite{GG2, GG4, GG5} for 
other examples of density profiles. 

Furthermore, we define the {\it defect shape\/} $W$ to be the closed subset $\bR^d$ obtained 
by the following limit
in the Hausdorff sense, 
\begin{equation}\label{defect-shape-def}
W=\lim_{t\to\infty} \frac 1t\{x: \de(t,x)=1\}
\end{equation}
provided the limit exists. If $\de_t=\emptyset$ for some $t$,
then we let $W=\emptyset$. Observe that the support of the measure $\rho\,d\alpha$ 
with density $\rho$ is included in $W$, but does not necessarily equal $W$.
For example, $\{x: \de(t,x)=1\}$ 
could be the singleton $\{0\}$ (e.g., for the identity CA), resulting in $W=\{0\}$ but $\rho\equiv 0$.
On the other hand, the following result is easy to prove. 

\begin{prop} \label{L-W}
If $W$ exists, then 
$$
W=\{\alpha: L(\alpha)\ge 0\}.
$$
\end{prop}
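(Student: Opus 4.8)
The plan is to exploit the fact that $\De(x,t)$ is nonnegative integer-valued, so that, when one asks merely whether $L(\alpha)\ge 0$, the inner expression in (\ref{lyapunov-profile-def}) only distinguishes presence from absence of defects. First I would record the basic dichotomy: since $\frac1t\log n\ge 0$ for every integer $n\ge 1$ while $\frac1t\log 0=-\infty$, and since (as noted after (\ref{lyapunov-profile-def})) the sum may be replaced by a maximum and is monotone in $\epsilon$, one has $L(\alpha)\ge 0$ if and only if for every $\epsilon>0$ there are arbitrarily large $t$ admitting a defect site $x$ (i.e.\ $\de(x,t)=1$) with $\|x/t-\alpha\|<\epsilon$, and $L(\alpha)=-\infty$ otherwise. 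Writing $K_t=t^{-1}\{x:\de(x,t)=1\}$, which is contained in a bounded neighborhood of $\co(\cN)$ so that the Hausdorff framework applies to compact sets, the condition $L(\alpha)\ge 0$ becomes a purely geometric statement about how closely the finite sets $K_t$ approach $\alpha$. This is exactly the information that $W$ encodes, which is what makes the two sets coincide.

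For the inclusion $\{\alpha:L(\alpha)\ge 0\}\subseteq W$, I would fix $\alpha$ with $L(\alpha)\ge 0$ and an arbitrary $\epsilon>0$. The dichotomy supplies arbitrarily large $t$ with a point of $K_t$ within $\epsilon$ of $\alpha$, while the outer half of the Hausdorff convergence $K_t\to W$ gives $K_t\subseteq W^{\epsilon}$ (the $\epsilon$-neighborhood of $W$) for all large $t$. Combining these along a common large $t$ and applying the triangle inequality yields $\mathrm{dist}(\alpha,W)<2\epsilon$; letting $\epsilon\downarrow 0$ and using that $W$ is closed gives $\alpha\in W$.

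For the reverse inclusion $W\subseteq\{\alpha:L(\alpha)\ge 0\}$, I would fix $\alpha\in W$ and $\epsilon>0$ and invoke the inner half of the Hausdorff convergence, namely $W\subseteq K_t^{\epsilon}$ for all large $t$: this produces a defect site $x_t$ with $\|x_t/t-\alpha\|<\epsilon$ for every large $t$, so the sum in (\ref{lyapunov-profile-def}) is at least $1$ and the inner $\limsup$ is $\ge 0$ for this $\epsilon$. Since this holds for every $\epsilon>0$ and $L(\alpha)$ is the decreasing $\epsilon\downarrow 0$ limit of these nonnegative quantities, $L(\alpha)\ge 0$. The degenerate case is immediate: if $\de_t=\emptyset$ for some $t$, then by the monotonicity of the branching walk the defects never reappear, so $W=\emptyset$ and $L\equiv-\infty$, and the identity holds trivially. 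I do not anticipate a serious obstacle, since the content of the statement is just the translation of the two directions of Hausdorff convergence into the two inclusions; the only points requiring care are the bookkeeping of $\epsilon$ versus $2\epsilon$ and the strict-versus-weak inequality in the defining neighborhood, together with confirming the boundedness of $K_t$ needed to legitimize Hausdorff convergence of compact sets.
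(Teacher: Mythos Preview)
Your proposal is correct and follows essentially the same approach as the paper's proof: both directions are obtained from the two halves of the Hausdorff convergence $K_t\to W$, combined with the observation that the integer-valuedness of $\De$ forces the inner $\limsup$ in (\ref{lyapunov-profile-def}) to be either $\ge 0$ or $-\infty$. The paper argues the inclusion $\{L\ge 0\}\subseteq W$ contrapositively (showing $L\equiv-\infty$ on each $(W^\gamma)^c$ and letting $\gamma\downarrow 0$), whereas you argue it directly via the triangle inequality; these are equivalent formulations of the same idea, and your version is slightly more explicit about how the two sides of Hausdorff convergence are used.
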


\begin{proof}
Observe that the set on the right is closed as $L$ is upper semicontinuous. If 
we take any $\gamma>0$, then $\De_t\equiv 0$ on the complement of the fattening 
$W^\gamma$ for large enough $t$; therefore $L|_{(W^\gamma)^c}\equiv-\infty$, 
and then $L|_{W^c}\equiv-\infty$.
On the other hand, for any $\alpha\in W$, there exists a sequence 
of space-time points $(x_n,t_n)$ so that $\de_{t_n}(x_n)=1$ and 
$x_n/t_n\to\alpha$. Then for any $\epsilon>0$, 
$\sum_{||x/t-\alpha||<\epsilon}\De(x,t)\ge 1$ for large enough $t$, 
thus $L(\alpha)\ge 0$. 
\end{proof}

\subsection{Dependence of the initialization, 
and classification of CA trajectories}\label{prelim-initial}

In general, $L$ depends on both the CA initial state $\ca_0=\eta$ and the defect 
initial state $\De_0=A$. We make this dependence explicit by the notation 
$L_A^\eta$. It is clear that $L_{A_1}^\eta\le L_{A_2}^\eta$ whenever 
$A_1\subset A_2$, therefore the limit 
$$
L_\infty=L_\infty^\eta=\lim_{n\to\infty} L^\eta_{[-n,n]^d}
$$
exists. The importance of this object is explained in our next result. 

\begin{theorem}\label{L-infty-existence}
Assume $\eta$ is sampled from an ergodic measure on $\bZ^d$. Then 
there exists a deterministic upper semicontinuous function $\overline L$ so that
$$
L_\infty^\eta=\overline L
$$
almost surely. 
\end{theorem}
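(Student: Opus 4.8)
The plan is to exploit the fact that, once the defect set is allowed to fill all of space, the resulting growth rate no longer ``sees'' any particular location, so it becomes invariant under lattice translations of the environment; ergodicity then forces it to be deterministic. The first task is to record the exact translation-equivariance of the dynamics. Writing $\De_t^{\eta,A}$ for the defect field with CA initial state $\eta$ and defect initial set $A$, and $(\sigma_v\eta)(x)=\eta(x+v)$, the CA is translation-equivariant, hence so is $\change_t$, and an easy induction gives
$$\De_t^{\sigma_v\eta,A}(x)=\De_t^{\eta,A+v}(x+v),\qquad v\in\bZ^d.$$
Because $v/t\to 0$, the shift of the target set by $v$ is invisible to the profile, so $L^{\sigma_v\eta}_A(\alpha)=L^\eta_{A+v}(\alpha)$. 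Taking $A=[-n,n]^d$ and using the monotonicity $L_{A_1}\le L_{A_2}$ to sandwich $[-n,n]^d+v$ between two centered cubes, I obtain in the limit $L_\infty^{\sigma_v\eta}(\alpha)=L_\infty^\eta(\alpha)$: the random variable $L_\infty^\eta(\alpha)$ is shift-invariant for every fixed $\alpha$.

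A cleaner bookkeeping device, which I will use throughout, is a single-source characterization. Since defect counts add over initial sources and $\frac1t\log$ of a finite sum has the same exponential rate as $\frac1t\log$ of its maximum, one checks (using $\inf_\epsilon\max_{y\in A}=\max_{y\in A}\inf_\epsilon$ for finite $A$, valid because the maximum of monotone functions is continuous) that $L^\eta_A(\alpha)=\max_{y\in A}L^\eta_{\{y\}}(\alpha)$ for finite $A$, and therefore
$$L_\infty^\eta(\alpha)=\sup_{y\in\bZ^d}L^\eta_{\{y\}}(\alpha)=\sup_{y\in\bZ^d}F(\sigma_y\eta,\alpha),\qquad F(\eta,\alpha):=L^\eta_{\{0\}}(\alpha),$$
where the last equality is again the shift relation. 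This exhibits $L_\infty^\eta(\alpha)$ as a supremum over the full orbit of $\eta$, making shift-invariance transparent. For each fixed $\alpha$ the map $\eta\mapsto L_\infty^\eta(\alpha)$ is measurable, shift-invariant, and bounded (it lies in $\{-\infty\}\cup[0,\log|\cN|]$), so ergodicity yields a constant $\overline L(\alpha)$ with $L_\infty^\eta(\alpha)=\overline L(\alpha)$ almost surely.

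It remains to upgrade this to an almost sure identity of functions and to produce an upper semicontinuous $\overline L$; this is where the argument becomes delicate, since $\sup_y F(\sigma_y\eta,\cdot)$ is a supremum of upper semicontinuous functions and need not itself be upper semicontinuous. I would introduce the coarsened rates
$$M_\epsilon^\eta(\alpha)=\sup_{y\in\bZ^d}\ \limsup_{t\to\infty}\frac1t\log\sum_{\|x/t-\alpha\|<\epsilon}\De_t^{\eta,\{y\}}(x),$$
which are again shift-invariant, hence almost surely constant for each fixed $(\alpha,\epsilon)$. Intersecting the countably many null sets indexed by $(\alpha,\epsilon)\in\bQ^d\times\bQ_{>0}$, I fix a full-measure event on which all of these rational values equal deterministic constants $\overline M(\alpha,\epsilon)$. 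A sandwich between nested balls with rational centers and radii then pins down $\inf_\epsilon M_\epsilon^\eta(\alpha)$ at \emph{every} $\alpha$ as the deterministic quantity $\overline L(\alpha):=\inf\{\overline M(q,\epsilon):q\in\bQ^d,\ \epsilon\in\bQ_{>0},\ \|q-\alpha\|<\epsilon\}$. This $\overline L$ is upper semicontinuous because each condition $\|q-\alpha\|<\epsilon$ is open, so $\{\overline L<c\}$ is open.

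The main obstacle is the final identification $L_\infty^\eta=\inf_\epsilon M_\epsilon^\eta$ almost surely, that is, the interchange of the spatial supremum (equivalently, the limit $\epsilon\downarrow0$) with the temporal exponential-rate limit. The minimax inequality gives only $L_\infty^\eta(\alpha)=\sup_y\inf_\epsilon\le\inf_\epsilon\sup_y=\inf_\epsilon M_\epsilon^\eta(\alpha)$, and the gap is genuine for infinitely many sources: a priori one might reach every neighborhood of $\alpha$ at a high rate using ever-more-distant sources without any single source attaining that rate in the exact direction $\alpha$. Closing this gap is the crux, and I expect to do it with a re-centering/restart argument: given a source reaching the direction $\alpha+\delta$ at rate near $\inf_\epsilon M_\epsilon^\eta(\alpha)$, I would use stationarity of the environment to relocate it so that it reaches $\alpha$ exactly at essentially the same rate, concatenating an initial transport segment whose exponential cost is controlled uniformly in the environment. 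This uniform environmental control, rather than the soft ergodicity used above, is the hard probabilistic ingredient; once it is in place the sandwich yields $L_\infty^\eta=\overline L$ for all $\alpha$ on a full-measure event, completing the proof.
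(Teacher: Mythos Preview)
Your shift-invariance argument and the single-source decomposition $L_\infty^\eta(\alpha)=\sup_{y}L_{\{y\}}^\eta(\alpha)$ are correct and useful. The plan is incomplete, however, at exactly the point you flag: the interchange $\sup_y\inf_\epsilon=\inf_\epsilon\sup_y$. You only have the easy inequality $L_\infty^\eta\le\inf_\epsilon M_\epsilon^\eta$, and the ``re-centering/restart'' heuristic you propose for the reverse would require uniform-in-environment control on defect transport that does not follow from soft ergodicity; nothing in the setup prevents the near-optimal source $y(\epsilon)$ from escaping to infinity as $\epsilon\downarrow0$ in a way that no fixed source matches in the exact direction $\alpha$. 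As written, this is a genuine gap rather than a routine step.

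The paper's proof sidesteps the issue completely by applying ergodicity at the level of \emph{functions} rather than pointwise in $\alpha$. You have already shown that $\eta\mapsto L_\infty^\eta$ is shift-invariant as a function of $\alpha$. Fix a countable family $\cF$ of continuous functions on a compact ball containing $\co(\cN)$ with the property that every upper semicontinuous $G$ satisfies $G=\inf\{f\in\cF:f\ge G\}$. Then each event $\Omega_f=\{\eta:L_\infty^\eta\le f\text{ on the ball}\}$ is shift-invariant, hence has probability $0$ or $1$; on the full-measure intersection $\bigcap_{\P(\Omega_f)=1}\Omega_f\cap\bigcap_{\P(\Omega_f)=0}\Omega_f^c$ the set $\{f\in\cF:L_\infty^\eta\le f\}$ coincides with a deterministic subfamily $\cF_1$, and one takes $\overline L=\inf\cF_1$. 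This is a pure separability argument---no minimax, no restart, no quantitative environmental control.
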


\begin{proof} All our functions will be defined on a large enough closed ball
within $\bR^d$, as the density profile is (deterministically) 
$-\infty$ outside the convex hull $\text{co}(\cN)$. 
Choose a countable set $\cF$ of continuous functions so that 
$G=\inf\{f\in \cF: f\ge G\}$ for every upper semicontinuous function $G$. 

The main observation is that the set $\{\eta: L_\infty^\eta\le y\}$ is 
translation invariant, that is, contains together with any $\eta$ all
its translations. By ergodicity, the probability of any such 
set is $0$ or $1$. For an $f\in \cF$, let  
$$\Omega_f=\{\eta:L_\infty^\eta(\alpha)\le f(\alpha)\text{ for every }
\alpha\}.
$$
Then 
$$
\P(\Omega_f)\in \{0,1\}
$$ 
for every $f\in \cF$. Let $\cF_0, \cF_1\subset \cF$  be the sets 
of functions with respective probabilities $0$ and $1$. The set 
$$
\Omega'=\left(\bigcap_{f\in \cF_1}\Omega_f\right)\cap\left( \bigcap_{f\in \cF_0}\Omega_f^c\right) 
$$
has $\P(\Omega')=1$. For $\eta\in \Omega'$, $\{f\in \cF:L_\infty^\eta\le f\}=\cF_1$. 
Thus, if we define 
$$
\overline L=\inf\{f:f\in \cF_1\}, 
$$
then $\overline L$ is upper semicontinuous and
$
\P(
L_\infty^\eta = \overline L)=1.
$
\end{proof}

As is the convention, we will therefore assume that $L_\infty^\eta$ is a determinstic 
function, by redefining it on the set of measure 0. 
In this fashion, we also define the deterministic closed set $W_\infty^\eta$ and the MLE $\lambda_\infty^\eta$. Again, we drop the superscript when the initial measure 
is understood from the context. 

For a given pair $\ca_0=\eta$ and $\De_0=A$, we call the defect accumulation 
dynamics: 
\begin{itemize}
\item  {\it expansive\/} if $L_A^\eta>0$ on a nonempty open set;
\item  {\it collapsing\/} if $L_A^\eta\equiv -\infty$; and 
\item  {\it marginal\/} otherwise. 
\end{itemize}
When $\ca_0$ is a product measure with a fixed density $p$, 
the above characterizations will refer to $L_\infty$. 
When not explicitly stated otherwise, the 
initialization is the uniform product measure, which has density $p=1/2$. 
With this default initial data, the above classification only 
depends on the rule, and in this context we refer to the CA itself as 
expansive, collapsing, or marginal, often
by the respective initial E, C, or M. We consider other densities 
$p\in (0,1)$ in Sections~\ref{prelim-defdam} and~\ref{ECA-subsection-dendep}.

\section{Defect dynamics vs.~damage spreading}\label{prelim-defdam}
The impetus to consider the defect shape $W$ comes from Wolfram's 
original concept of damage spreading \cite{Wol1, Gra2}, discussed in 
Section~\ref{intro}. We now provide a formal definition and briefly contrast the two notions. 
The {\it damage CA\/} is yet another ``second class'' dynamics on the 
trajectory $\ca_t$, given by the set of damaged sites $\damage_t\in \{0,1\}^{\bZ^d}$
and the recursive rule (in which addition and reduction $\mmod 2$ are sitewise)
$$
\damage_{t+1}=\left(\Phi((\ca_t+\damage_t)\mmod 2)+\ca_t\right) \mmod 2
$$
that records which updates are affected by the currently damaged sites.
We define the corresponding {\it damage shape\/} $W_\damage$ and {\it damage density
profile\/} $\rho_\damage$ analogously to (\ref{defect-shape-def}) and 
(\ref{density-profile-def}), respectively.

\vspace{-0cm}

\begin{figure}[ht]
\begin{center}
\includegraphics[trim=0cm 0cm 0cm 1cm, clip, width=5cm]{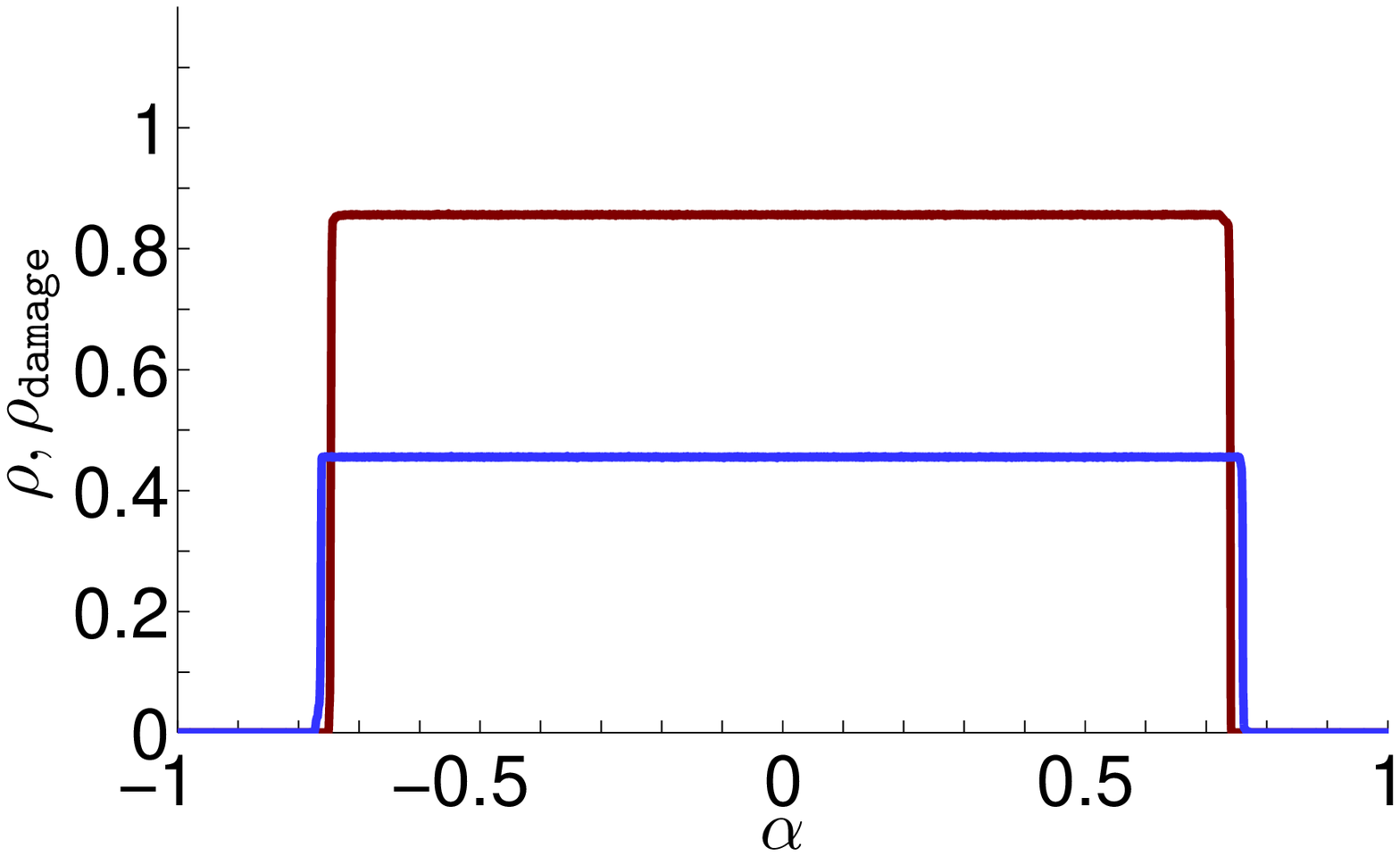}
\hspace{-0cm}
\includegraphics[trim=0cm 0cm 0cm 1cm, clip, width=5cm]{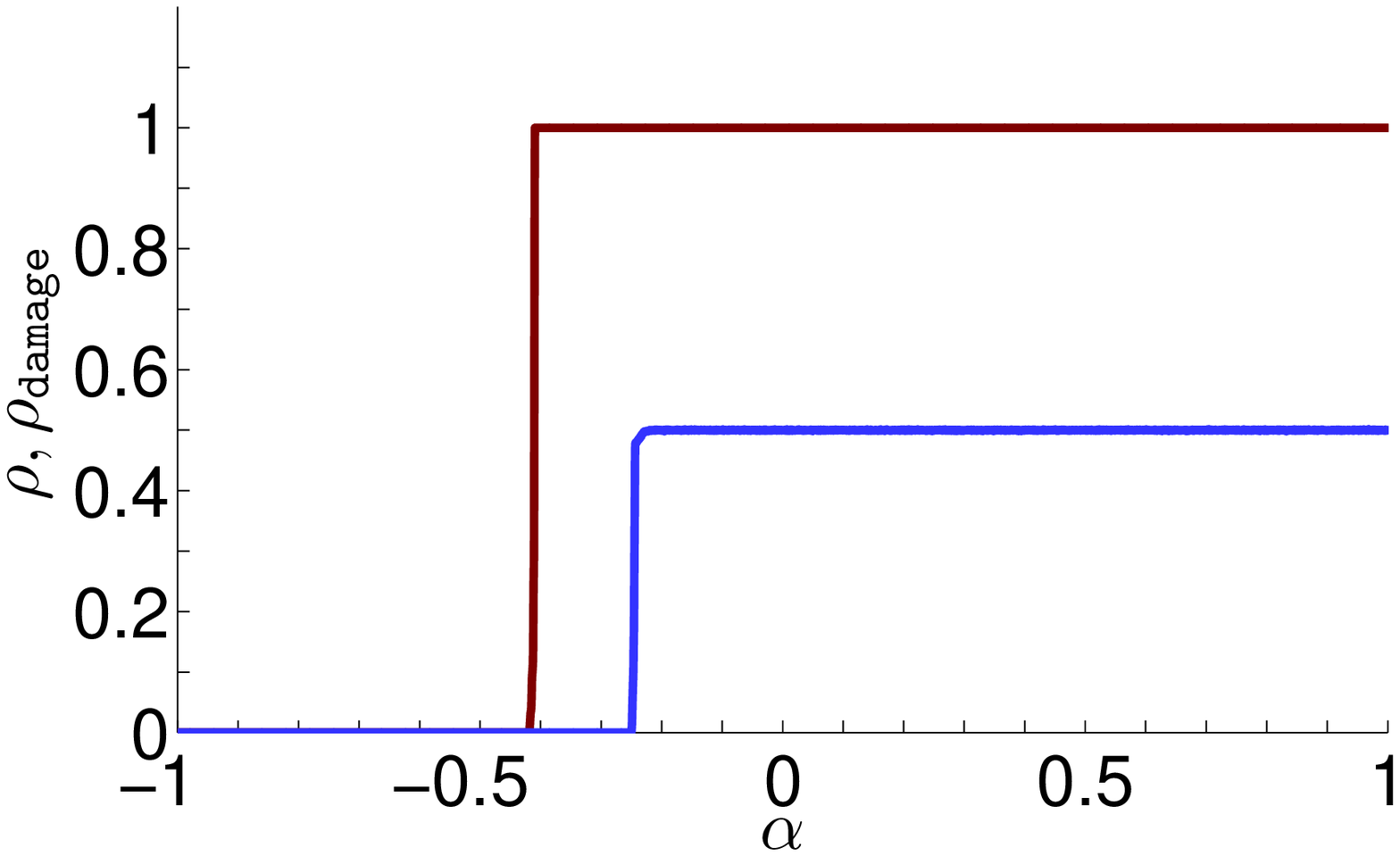}
\hspace{-0cm}
\includegraphics[trim=0cm 0cm 0cm 1cm, clip, width=5cm]{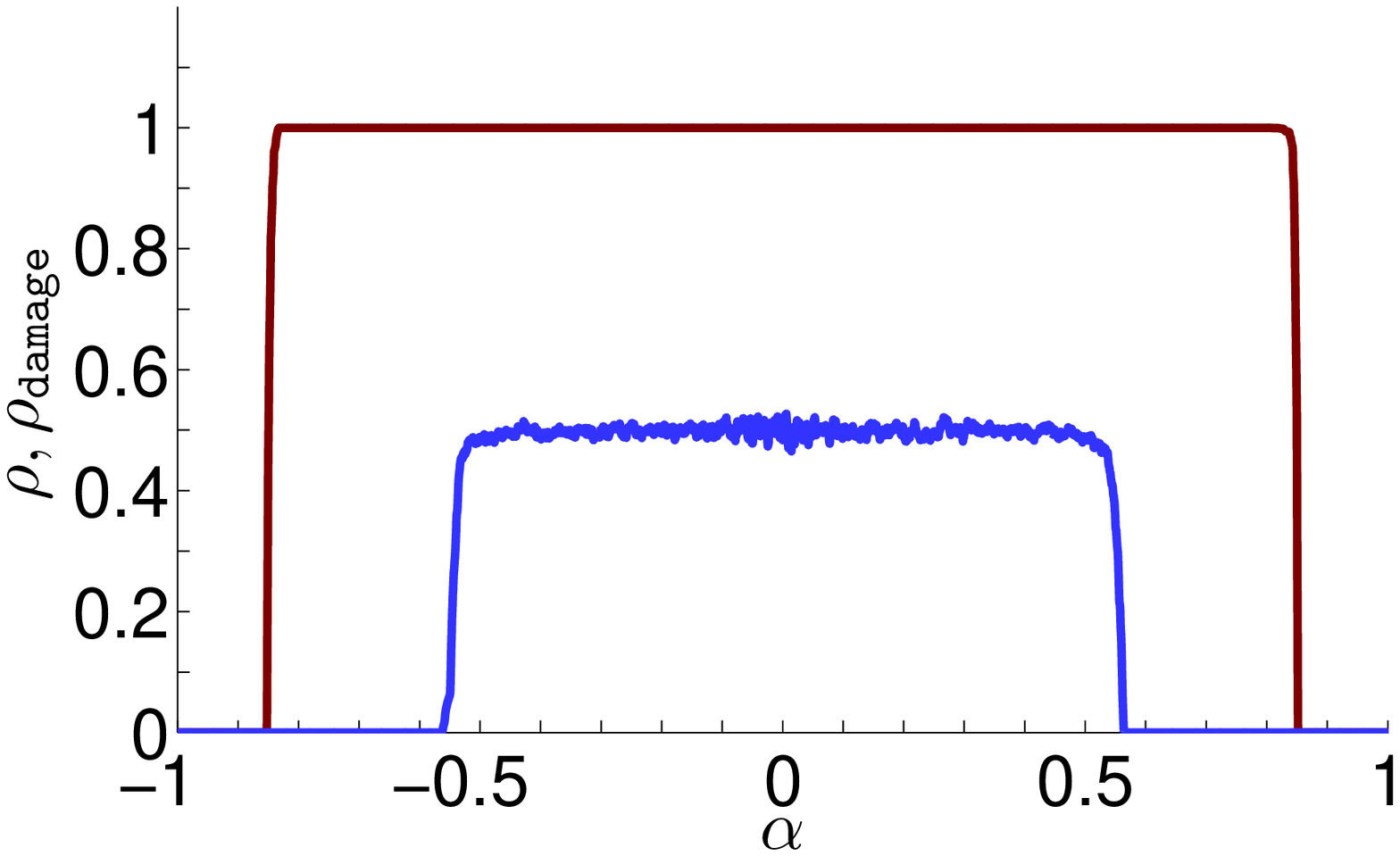}
\end{center}
\vspace{-0.5cm}
\caption{Empirical defect (dark red) and damage (light blue) density profiles at time $10^5$ 
for rules {\it 22\/}, {\it 30\/}, and {\it 54\/}.}\label{damage-defect-figure}
\end{figure}

To compare the damage and defect dynamics, we will assume they initially 
agree, i.e., that $\damage_0=\de_0=\De_0$ is a finite set. 
The dynamics $\de_t$ of defect sites only tracks one-site 
perturbations of $\ca_t$, while $\damage_t$ performs simultaneous 
changes at all perturbed sites, so
there might be significant difference between the 
two. Three examples of density and damage profiles
started from a uniform product measure are in 
Fig.~\ref{damage-defect-figure}. Observe that for {\it Rule 22\/} 
$\rho_\damage<\rho$ but $W\subsetneqq W_\damage$; in fact $W_\damage$ has
edges at about $\pm 0.77$ \cite{Gra2}, while those of $W$ lag behind by about $0.025$.
Another CA for which $\de_t$ similarly lags behind
$\damage_t$ is {\it Rule 122\/}, but in this instance  
the empirical evidence indicates that the difference disappears in 
the limit, as  $W= W_\damage=[-1,1]$.
On the other hand, two chaotic examples for which $W_\damage\subsetneqq W$ are also
included in Fig.~\ref{damage-defect-figure}. We also remark that, 
for additive rules such as {\it Rule 150\/}, 
$W_\damage$ does not exist due to the fractal evolution of $\damage_t$,  which is, for the same reason, much smaller than $\de_t$ for most
(but not all) times $t$.

Assume now that the initial state is more general, 
a product measure with density $p$.  For elementary CA, 
we address the dependence of defect accumulation dynamics on $p$ in 
Section~\ref{ECA-subsection-dendep}.  In this setting, rules
with significant variation in $p$ coincide with rules in which $W$ is an interval of positive length 
while $W_\damage$ is at most a singleton for all $p\in (0,1)$.  
(See Proposition~\ref{rule-38} for a formal proof in case of {\it Rule 38\/}.) 
This equivalence is interesting enough 
for a thorough theoretical development, which we do not attempt here. Instead, we 
provide a definition and a non-rigorous explanation next.

We call a one-dimensional CA trajectory $\ca_t$ {\it striped\/} (resp., {\it degenerate\/})
if there exist a translation 
number $v_0\in \bZ$, a delay time $t_0\ge 1$, an initial time $t_i\ge 0$, and an  
$\epsilon>0$ so that $\ca_{t+t_0}(x)=\xi_t(x-v_0)$ (resp., $\ca_{t}(x+1)=\ca_t(x)$)  
for $t\ge t_i$ and $x\in[(\inf\cN-\epsilon) t, 
(\sup\cN+\epsilon) t]$.  
 
A {\it stripes\/} CA is one whose trajectory is almost surely striped 
and non-degenerate for any initial product measure with 
density $p\in (p_1,p_2)$. Here, $(p_1,p_2)$ is a nonempty interval of densities
which is, when unspecified, assumed to be $(0,1)$.
For such CA,
the statistical properties of the invariant striped state typically  
depend on $p$. Consequently, 
if a stripes CA is expansive, then 
we expect that the Lyapunov profile also varies with $p$. 
On the other hand, it is easy to see that if
$\ca_t$ and its perturbation $(\ca_t+\damage_t)\mmod 2$ are both striped, 
$\damage_t$ remains bounded. For product measures, a striped trajectory typically 
results from transient structures that are eroded away at exponential rate, and this 
property cannot be changed by a finite perturbation. For such trajectories, 
$W_\damage$ is at most a singleton. Therefore, the equivalence discussed
above is a consequence of the fact that 
all expansive elementary CA started from product measures 
are either attracted to a chaotic or complex state for any density $p\in (0,1)$, 
or are stripes CA. We now discuss two examples 
with $\cN=\{0,\pm 1,\pm 2\}$ that show that there are other possibilities for general CA.

The first CA is simple: the update rule is
$abcde\mapsto 1$ if and only if $abcde$ includes $010$ as a substring. The resulting
global rule $\Phi$ satisfies $\Phi^2=0$, as for any $\ca_0$ 
there are no isolated $1$s at time $t=1$ and then no
$1$s at all at time $t=2$. This is a degenerate case, and indeed $W_\damage=\emptyset$, but $W=[-1,1]$
and $\lambda_\infty=\log 3$ for all initial states (as the defect dynamics
coincides with that for {\it Rule 150\/} from time $1$ on). In particular, there is 
no dependence on $p$ but very large discrepancy between $W$ and $W_\damage$. 

Our second counterexample is a ``particle''
CA that conserves the density of $1$s. A $1$  at $x$ makes a jump to $x+2$ if 
the states in $[x,x+2]$ are $100$ and it makes a jump to $x-1$ if the states at $[x-2,x+1]$ 
are $1011$. 
Simulations make it clear that trajectories from random initializations are not
striped, and that this rule is marginal for small $p$ (with $W=\{2\}$) and 
expansive for large $p$, with a phase transition somewhere between $0.2$ and $0.3$. 
Moreover, $W_\damage=W$ at all $p\in (0,1)$, by
contrast to the dramatic dependence on $p$.

\section{Elementary cellular automata}\label{ECA-section}

In this section we investigate the defect accumulation dynamics 
for the elementary CA, the one-dimensional rules with $\cN=\{-1,0,1\}$. The initial configuration 
$\xi_0$ will be the default uniform product measure, except in 
Section~\ref{ECA-subsection-dendep}, where 
we discuss product measures with other constant densities. 
In these circumstances, the defect dynamics 
remains essentially equivalent if the roles of the two states are switched, or if the rule is replaced 
by its left-right reflection. This leaves us with 88 equivalence classes represented 
by 88 ``minimal'' CA \cite{Vic2}, which we proceed to analyze. The update functions 
for rules featured in our rigorous arguments (here or in 
Section~\ref{periodic-profiles}) are given in Table~\ref{ECA-update}. 
 
\begin{table}[ht!]
\caption{Update functions for some elementary CA.} 
\vspace{0.0cm}
\label{ECA-update}
\centering
\begin{tabular}{| c | c | c| c | c | c |c|c|c|}
			\hline
 \begin{tabular}{@{}c@{}}Rule\end{tabular}
& \begin{tabular}{@{}c@{}}$000$
\end{tabular}
& \begin{tabular}{@{}c@{}}$001$ 
\end{tabular}
& \begin{tabular}{@{}c@{}}$010$
\end{tabular}
& \begin{tabular}{@{}c@{}}$011$
\end{tabular}
& \begin{tabular}{@{}c@{}}$100$
\end{tabular}
& \begin{tabular}{@{}c@{}}$101$ 
\end{tabular}
& \begin{tabular}{@{}c@{}}$110$ 
\end{tabular}
& \begin{tabular}{@{}c@{}}$111$ 
\end{tabular}
  \\
			\hline
			\hline
{\it 22\/} & $0$ & $1$& $1$& $0$& $1$& $0$& $0$& $0$\\ \hline
{\it 27\/}  & $1$ & $1$& $0$& $1$& $1$& $0$& $0$& $0$\\ \hline
{\it 38\/}  & $0$ & $1$& $1$& $0$& $0$& $1$& $0$& $0$\\ \hline
{\it 110\/}  & $0$ & $1$& $1$& $1$& $0$& $1$& $1$& $0$\\ \hline
{\it 152\/}  & $0$ & $0$& $0$& $1$& $1$& $0$& $0$& $1$\\ \hline
\end{tabular}

\end{table}

Many of the $88$ rules are quite transparent and a simple worst case analysis 
as elucidated in our next two theorems yields a rigorous result. The first theorem 
gives the condition under which defect growth is restricted. 

\begin{theorem} \label{wc-ub} Assume that there exist a string $B\in \{0,1\}^b$, $b>0$,
a time $t_B$, and a number $v_B$ with the following property. 
Any pair $(\ca_0,\de_0)$, such that $\ca_0$ equals $B$ on $[0,b-1]$
and  $\de_0$ is 1 exactly on the complement $[0,b-1]^c$, yields $\ca_{t_B}|_{[v_B, v_B+b-1]}=B$ 
and $\de_{t_B}|_{[v_B, v_B+b-1]}\equiv 0$. Then, 
if $\xi_0$ is any translation invariant product measure with $\P(\xi_0(x)=1)\in(0,1)$,  
$L_\infty$ equals $-\infty$ off $\{v_B/t_B\}$. In particular, with such an initialization,
the defect 
accumulation dynamics is not expansive.
\end{theorem}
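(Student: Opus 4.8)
The plan is to turn each occurrence of the string $B$ in $\ca_0$ into a moving \emph{clean wall} for the defect dynamics, using monotonicity together with the worst-case hypothesis, and then to show that such walls confine the defects issuing from any finite set. Since $L_\infty=\lim_n L^\eta_{[-n,n]}$ is an increasing limit, it is enough to prove $L^\eta_{[-n,n]}(\alpha)=-\infty$ for each fixed $n$ and each $\alpha\neq v_B/t_B$; the statement for $L_\infty$, and hence non-expansiveness (a single point is not open, and $L_A\le L_\infty$ for all $A$), follows at once. I will lean on two facts. Monotonicity of the branching walk: enlarging $\de_0$ pointwise enlarges every $\de_t$, so any defect configuration is dominated by the all-defects-off-$B$ configuration appearing in the hypothesis. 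Genericity of $B$: because $\eta$ is a product measure with $\P(\eta(x)=1)\in(0,1)$, the string $B$ has positive probability in every window, hence almost surely occurs at infinitely many positions $k$, with occurrences tending to both $+\infty$ and $-\infty$.

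Next I would build the walls and iterate. Fix an occurrence of $B$ at $[k,k+b-1]$. Dominating the true $\de_0$ by the configuration that is $1$ on all of $[k,k+b-1]^c$ and applying the hypothesis translated by $k$ gives $\ca_{t_B}=B$ on $[k+v_B,k+v_B+b-1]$ and $\de_{t_B}\equiv 0$ there. The regeneration of $B$ is exactly what permits iteration: at time $t_B$ the shifted block again reads $B$, so dominating once more and applying the hypothesis translated by $k+v_B$ (and by $t_B$ in time) yields a clean, $B$-labelled block at $[k+2v_B,k+2v_B+b-1]$ at time $2t_B$. By induction, for every $j\ge 0$ the block $[k+jv_B,\,k+jv_B+b-1]$ carries the state $B$ and no defects at time $jt_B$; as this was derived from the maximal dynamics it holds for every initialization. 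These blocks are the successive macro-time snapshots of a wall travelling at velocity $v_B/t_B$.

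Finally I would prove confinement and read off the profile. A site holds a defect only if there is a transmission path $y_0,y_1,\dots,y_t=x$ with $y_0\in\mathrm{supp}(\De_0)$, $y_s\in\cN_{y_{s+1}}$ and $\change_s(y_s,y_{s+1})=1$ at each step; along such a path $\De_s(y_s)>0$ for all $s$, so at every macro-time $jt_B$ the path must avoid all wall blocks. Since $\cN=\{-1,0,1\}$ the path moves by at most one site per tick, hence by at most $t_B$ per macro-step, so a path starting in $[-n,n]$ can cross neither the nearest wall to its right nor the nearest wall to its left (both present almost surely): reaching the far side of a width-$b$ block that is forbidden at both bracketing macro-times demands a displacement the bounded speed does not allow. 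Thus all defects remain, for all time, inside the bounded space-time region trapped between these two walls, which in the co-moving coordinate $x'=x-(v_B/t_B)t$ is a bounded strip. For any $\alpha\neq v_B/t_B$ the ray $x=\alpha t$ has $x'=(\alpha-v_B/t_B)t\to\pm\infty$ and eventually exits the strip, so $\sum_{\|x/t-\alpha\|<\epsilon}\De(x,t)=0$ for large $t$ and $L^\eta_{[-n,n]}(\alpha)=-\infty$.

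The step I expect to be the main obstacle is precisely this confinement: ruling out that a defect lineage \emph{tunnels} across a wall between two consecutive macro-times, given that cleanliness of the block is only asserted at the times $jt_B$. The control comes from the finite propagation speed of the three-neighbour rule, which keeps the advancing defect front behind the clean block that regenerates a wall-width ahead at the next macro-time; making this quantitative (comparing the per-macro-step reach $t_B+|v_B|$ with the wall width $b$) is the crux of the argument, whereas the product-measure hypothesis enters only to guarantee walls on both sides of any finite $\De_0$.
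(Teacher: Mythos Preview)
Your outline follows the paper's argument in spirit---monotonicity, regeneration of a clean $B$-block via the hypothesis, barriers on both sides of the finite initial set---but the confinement step does not close with a \emph{single} copy of $B$, and you have in fact put your finger on exactly where it breaks. The quantitative comparison you propose between the per-macro-step reach and the wall width $b$ can genuinely fail: for {\it Rule 1\/} the table gives $B=1$, so $b=1$, with $t_B=2$ and $v_B=0$, and a defect at $k-1$ at time $jt_B$ can perfectly well sit at $k+1$ at time $(j+1)t_B$, having passed through $k$ at the intermediate tick $jt_B+1$, where nothing forbids it. A clean window of width $b$ asserted only at the macro-times is simply too thin to block a path that moves $t_B$ sites in $t_B$ steps.

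The paper's remedy is to thicken the wall. Rather than a single $B$, it finds $t_B$ \emph{contiguous} copies---a block $B'=B^{t_B}$ of width $b\,t_B$---to one side of the finite support of $\de_0$ (such a run exists a.s.\ under the product-measure hypothesis). Applying the hypothesis to each of the $t_B$ copies shows that after $t_B$ steps the entire shifted $B'$-block is again $B'$-labelled and defect-free. Now the light-cone lets the defect front advance at most $t_B$ sites in $t_B$ steps, while the clean zone has width $b\,t_B\ge t_B$; combined with the cleanliness of the shifted block, the front cannot emerge beyond its far edge, and the induction hypothesis ``all defects lie to the left of the $B'$-block'' is restored. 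This yields $\de_t\subset(-\infty, N+t\,v_B/t_B]$ for an a.s.\ finite $N$, and a symmetric $B'$-block on the other side gives the opposite bound. The idea you are missing is therefore not a sharper speed estimate but a wider wall: concatenate enough copies of $B$ that the clean corridor outruns the light cone over one macro-step.
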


\begin{proof}
Assume a finite $\de_0$. A translate of $B'$ consisting of 
$t_B$ contiguous copies of 
$B$ (almost surely) exists somewhere to the 
right of the support of $\de_0$. Suppose that, at some time $t$, an interval 
$[x, x+b\cdot t_B-1]$ has the following two properties: 
all defects are to its left; and it
is occupied by a translate of $B'$. As defects cannot advance faster than by distance $1$
at each time step, and by the hypotheses, the interval  
$[x+v_B, x+b\cdot t_B-1+v_B]$ has the same properties at time $t+t_B$. It follows 
that $\de_t \subset(-\infty, N+ t\cdot v_B/t_B]$ for all $t\ge 0$ and 
some a.s.~finite random 
variable $N$. Consequently, $W\subset (-\infty, v_B/t_B]$ a.s. As this is true 
for any finite $\de_0$, $L_\infty\equiv-\infty$ on $(v_B/t_B, \infty)$. 
An analogous argument shows that the same holds for $(-\infty, v_B/t_B)$
\end{proof}

If $B$ and $t_B$ are fixed, the property required by Theorem~\ref{wc-ub}, 
can be checked by a finite verification. Namely, to look for 
all possible $v_B$, all $2^{4t_B}$ possible initial configurations 
in $2t_B$ sites both to the left and to the right of $B$ are generated 
and then the dynamics is run to the time $t_B$. If it happens that 
$B$ occurs at {\it two\/} (or more) distinct intervals of $b$ sites at time $t_B$, 
then Theorem~\ref{wc-ub} implies the rule is collapsing. 

We now state a general result in the opposite direction, i.e., we give a condition 
that guarantees defect expansion. Recall that $L_{\cM}$ is the Lyapunov profile for the additive dynamics 
with neighborhood $\cM$.

\begin{theorem} \label{wc-lb} Assume that there exist a set $\cM\subset \bZ$ 
with at least two points
and a time $t_\cM$ with 
the following property: for $\de_0=\ind(0)$ and arbitrary $\ca_0$, 
$\de_{t_{\cM}}\equiv 1$ on $\cM$. Then 
$$
L_\infty(\alpha)\ge \frac 1{t_\cM}L_{\cM}(t_\cM\alpha).
$$
In particular, the defect accumulation 
dynamics is expansive. 
\end{theorem}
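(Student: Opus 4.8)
The plan is to coarse-grain time into blocks of length $t_\cM$ and to show that, on this slower clock, the defect count $\De_t$ dominates the additive branching walk with neighborhood $\cM$, whose profile is the known function $L_\cM$. Two structural facts drive the argument. First, the map $\De_0\mapsto\De_t$ is linear with nonnegative $0/1$ coefficients (it is the product of the matrices $\change_s(\cdot,\cdot)$), so the defects obey superposition: the contribution of each initial defect accumulates additively and independently. Second, the hypothesis, combined with the spatial and temporal homogeneity of the rule, says that a single defect placed at any site $y$ at any time $s$ --- regardless of the ambient configuration $\ca_s$ --- forces $\De_{s+t_\cM}(w)\ge 1$ for every $w\in y+\cM$. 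I would apply this repeatedly, using crucially that the hypothesis permits an \emph{arbitrary} background, so it is legitimate to invoke it with the configuration $\ca_{kt_\cM}$ that the CA has actually reached.

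Let $\tilde\De$ denote the additive counting branching walk with neighborhood $\cM$: $\tilde\De_0=\ind(0)$ and $\tilde\De_{k+1}(x)=\sum_{z\in\cM}\tilde\De_k(x-z)$. This is exactly the defect count of the additive CA with $\cN=\cM$, whose Lyapunov profile is the proper, initialization-independent function $L_\cM$. Starting the original dynamics from $\De_0=\ind(0)$, I would prove by induction on $k$ that $\De_{kt_\cM}(x)\ge\tilde\De_k(x)$ for all $x$. The base case is immediate. For the step, superposition lets me write $\De_{(k+1)t_\cM}$ as the sum of the contributions of the $\De_{kt_\cM}(y)$ defects sitting at each $y$ at time $kt_\cM$; each such contribution is at least $\ind(x\in y+\cM)$ by the single-defect bound, so $\De_{(k+1)t_\cM}(x)\ge\sum_{y:\,x-y\in\cM}\De_{kt_\cM}(y)\ge\sum_{y:\,x-y\in\cM}\tilde\De_k(y)=\tilde\De_{k+1}(x)$, the last inequality being the inductive hypothesis.

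It then remains to read off the profile. Restricting the $\limsup$ in (\ref{lyapunov-profile-def}) to the subsequence $t=kt_\cM$ only lowers it, and on this subsequence the domination gives $\sum_{\|x/(kt_\cM)-\al\|<\epsilon}\De(x,kt_\cM)\ge\sum_{\|x/k-\be\|<t_\cM\epsilon}\tilde\De_k(x)$ with $\be=t_\cM\al$. Dividing by $kt_\cM$, taking $\limsup_k$ and then $\epsilon\downarrow 0$, and using that the additive profile is the proper limit $L_\cM(\be)$, yields $L_{\{0\}}(\al)\ge\frac{1}{t_\cM}L_\cM(t_\cM\al)$ for every background; since $L_{\{0\}}\le L_\infty$ by monotonicity in the initial defect set, the claimed bound follows. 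Expansiveness is then automatic: $\cM$ has at least two points, so $L_\cM$ attains the value $\log|\cM|\ge\log 2>0$ at the mean direction and, being concave and continuous on the interior of $\co(\cM)$, stays positive on a neighborhood of it; hence $L_\infty>0$ on a nonempty open set. I expect the only real care to be needed in the last paragraph --- tracking the factor $t_\cM$ through the time-rescaling and the shrinking $\epsilon$-windows, and justifying that the hypothesis may be applied afresh in every block with the genuinely occurring background.
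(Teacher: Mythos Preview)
Your proof is correct and follows exactly the route the paper indicates: the paper's proof is the single line ``This follows from a simple induction argument,'' and you have written out precisely that induction (linearity of $\De_0\mapsto\De_t$, the translation- and time-shift invariance that lets the hypothesis be reapplied at each block, and the time-rescaling to read off the profile). There is nothing to add.
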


\begin{proof} This follows from a simple induction argument. 
\end{proof}

\subsection{Elementary CA with provably collapsing defect dynamics}\label{ECA-subsection-collapsing}

Theorem~\ref{wc-ub} implies defect collapse for the $8$ rules listed in 
Table~\ref{ECA-collapsing}.  
 
\begin{table}[ht!]
\caption{The $8$ provably collapsing rules.} 
\vspace{0.0cm}
\label{ECA-collapsing}
\centering
\begin{tabular}{| c | c | c|}
			\hline
 \begin{tabular}{@{}c@{}}Rule\end{tabular}
& \begin{tabular}{@{}c@{}}class 
\end{tabular}
& \begin{tabular}{@{}c@{}}proof 
\end{tabular}
  \\
			\hline
			\hline
{\it 0\/} & C & trivial\\ \hline
{\it 8\/} & C & $B=0$, $t_B=1$, $v_B=-1,0$ \\ \hline
{\it 32\/} & C &$B=0$, $t_B=1$, $v_B=\pm 1$ \\ \hline
{\it 40\/} & C &$B=00$, $t_B=1$, $v_B=-1,0$\\ \hline
{\it 128\/} & C & $B=0$, $t_B=1$, $v_B=-1,0,1$ \\ \hline
{\it 136\/} & C & $B=0$, $t_B=1$, $v_B=-1,0$\\ \hline
{\it 160\/} & C & $B=0$, $t_B=1$, $v_B=\pm1$ \\ \hline
{\it 168\/} & C & $B=00$, $t_B=1$, $v_B=-1,0$\\ \hline
\end{tabular}

\end{table}

\subsection{Elementary CA with provably marginal defect dynamics}\label{ECA-subsection-marginal}
 
The rules for which we are able to verify the hypotheses of 
Theorem~\ref{wc-ub} to prove marginal defect dynamics are listed in the Table~\ref{ECA-marginal}. 
We do not provide the arguments that these cases are indeed not collapsing; 
these can be obtained at a glimpse from examples generated by random
initial states (e.g., see Fig.~\ref{intro-figure} for {\it Rule 7\/}).
The MLE directions 
are given by application of Theorem~\ref{wc-ub}, while approximate MLE values are 
based on empirical evidence: we ran a random configuration with an interval of
$10^3$ defects for $10^5$ time steps. However, as
we have not attempted a rigorous determination, 
it is possible that rare favorable configurations result in values higher 
than we obtained. For example, {\it Rule 73\/} seems a good candidate for this 
to occur.

\begin{center}
\begin{longtable}{|c|c|c|c|c|}
\caption{The $46$ rules with provably marginal defect accumulation dynamics.}
\label{ECA-marginal} \\

\hline \multicolumn{1}{|c|}{Rule} & \multicolumn{1}{c|}{class} & \multicolumn{1}{c|}{proof} 
& \multicolumn{1}{c|}{MLE dir.} & \multicolumn{1}{c|}{MLE}
\\ \hline \hline
\endfirsthead

\multicolumn{3}{c}%
{{
\tablename\ \thetable{} --- {\it continued from previous page}}} \\
\hline \multicolumn{1}{|c|}{Rule} & \multicolumn{1}{c|}{class} & \multicolumn{1}{c|}{proof} 
& \multicolumn{1}{c|}{MLE dir.} & \multicolumn{1}{c|}{MLE}
\\ \hline \hline
\endhead

 \multicolumn{3}{r}{{\it Continued on next page}} \\ 
\endfoot

\hline 
\endlastfoot
{\it 1\/} & M & $B=1$, $t_B=2$, $v_B=0$& $0$ & $0.55$\\ \hline
{\it 2\/}  & M & $B=0$, $t_B=1$, $v_B=-1$& $-1$ & $0$\\ \hline
{\it 3\/}  & M & $B=00$, $t_B=2$, $v_B=1$& $1/2$ & $0.35$ \\ \hline
{\it 4\/} & M & $B=0$, $t_B=1$, $v_B=0$& $0$ & $0$ \\ \hline
{\it 5\/} &  M & $B=1$, $t_B=2$, $v_B=0$& $0$ & $0.35$\\ \hline
{\it 7\/} &  M & $B=11$, $t_B=2$, $v_B=1$& $1/2$ & $0.35$ \\ \hline
{\it 10\/} &  M & $B=0$, $t_B=1$, $v_B=-1$& $-1$ & $0$\\ \hline
{\it 12\/} &  M & $B=0$, $t_B=1$, $v_B=0$& $0$ & $0$\\ \hline
{\it 13\/} &  M & $B=01$, $t_B=1$, $v_B=0$& $0$ & $0.48$\\ \hline
{\it 15\/} &  M & $B=0$, $t_B=2$, $v_B=2$ (right shift w.~toggle)& $1$ & $0$\\ \hline
{\it 19\/} &  M & $B=00$, $t_B=2$, $v_B=0$& $0$ & $0.35$\\ \hline
{\it 23\/} &  M & $B=00$, $t_B=2$, $v_B=0$& $0$ & $0.69$\\ \hline
{\it 24\/} &  M & Prop.~\ref{equivalent-rules}& $1$ & $0$\\ \hline
{\it 27\/} & M & Prop.~\ref{rule27-marginal}& $1/2$ & $0$\\ \hline
{\it 28\/} &  M & $B=01$, $t_B=1$, $v_B=0$& $0$ & $0.48$\\ \hline
{\it 29\/} &  M & $B=01$, $t_B=1$, $v_B=0$& $0$ & $0.35$\\ \hline
{\it 33\/} &  M & Prop.~\ref{equivalent-rules} & $0$ & $0.66$\\ \hline
{\it 34\/} &  M & $B=0$, $t_B=1$, $v_B=-1$& $-1$ & $0$\\ \hline
{\it 36\/} &  M & $B=00$, $t_B=1$, $v_B=0$& $0$ & $0$\\ \hline
{\it 42\/} &  M & $B=0$, $t_B=1$, $v_B=-1$& $-1$ & $0$\\ \hline
{\it 44\/} &  M & $B=00$, $t_B=1$, $v_B=0$& $0$ & $0.48$\\ \hline
{\it 46\/} &  M & Prop.~\ref{equivalent-rules}         & $-1$ & $0$\\ \hline
{\it 50\/} &  M & $B=01$, $t_B=2$, $v_B=0$& $0$ & $0.48$\\ \hline
{\it 51\/} &  M & $B=0$, $t_B=2$, $v_B=0$ (toggle)& $0$ & $0$\\ \hline
{\it 72\/} &  M & $B=0$, $t_B=1$, $v_B=0$& $0$ & $0.69$\\ \hline
{\it 73\/} &  M & $B=0110$, $t_B=1$, $v_B=0$& $0$ & $0.91$\\ \hline
{\it 76\/} &  M & $B=0$, $t_B=1$, $v_B=0$& $0$ & $0$\\ \hline
{\it 77\/} &  M & $B=01$, $t_B=1$, $v_B=0$& $0$ & $0.69$\\ \hline
{\it 78\/} &  M & $B=10$, $t_B=1$, $v_B=0$& $0$ & $0.48$\\ \hline
{\it 94\/} &  M & $B=101$, $t_B=1$, $v_B=0$& $0$ & $0.61$\\ \hline
{\it 104\/} &  M & $B=00$, $t_B=1$, $v_B=0$& $0$ & $0.69$\\ \hline
{\it 108\/} &  M & $B=00$, $t_B=1$, $v_B=0$& $0$ & $0.86$\\ \hline
{\it 130\/} &  M & $B=0$, $t_B=1$, $v_B=-1$& $-1$ & $0$\\ \hline
{\it 132\/} &  M & $B=0$, $t_B=1$, $v_B=0$& $0$ & $0$\\ \hline
{\it 138\/} &  M & $B=0$, $t_B=1$, $v_B=-1$& $-1$ & $0$\\ \hline
{\it 140\/} &  M & $B=0$, $t_B=1$, $v_B=0$& $0$ & $0$\\ \hline
{\it 152\/} &  M & Prop.~\ref{rule152-marginal} & $1$ & $0$\\ \hline
{\it 156\/} &  M & $B=01$, $t_B=1$, $v_B=0$& $0$ & $0.69$\\ \hline
{\it 162\/} &  M & $B=0$, $t_B=1$, $v_B=-1$& $-1$ & $0$\\ \hline
{\it 164\/} &  M & $B=00$, $t_B=1$, $v_B=0$& $0$ & $0$\\ \hline
{\it 170\/} &  M & $B=0$, $t_B=1$, $v_B=-1$ (left shift) & $-1$ & $0$\\ \hline
{\it 172\/} &  M & $B=00$, $t_B=1$, $v_B=0$& $0$ & $0.48$\\ \hline
{\it 178\/} &  M & $B=01$, $t_B=2$, $v_B=0$& $0$ & $0.69$\\ \hline
{\it 200\/} &  M & $B=0$, $t_B=1$, $v_B=0$& $0$ & $0.69$\\ \hline
{\it 204\/} &  M & $B=0$, $t_B=1$, $v_B=0$ (identity)& $0$ & $0$\\ \hline
{\it 232\/} &  M & $B=00$, $t_B=1$, $v_B=0$& $0$ & $0.69$\\ 
\end{longtable}
\end{center}

For some rules, Theorem~\ref{wc-ub} does not apply directly but only after a transient 
period; we collect the necessary properties in our next three results. We remark that agreement 
of the dynamics of two CA after a transient time does not necessarily imply that their defect 
accumulation dynamics agree. 

\begin{prop}\label{equivalent-rules}
The following hold for arbitrary initial states:
\begin{enumerate}
\item {\it Rule 24\/}: All $1$s are isolated at time $t=1$; thereafter, the 
CA evolves as {\it Rule 2\/}. 
\item {\it Rule 33\/}: Every isolated $0$ at $(x,t)$, $t\ge 1$, requires two 
isolated $0$s at $(x\pm 1, t-1)$. If a configuration has no isolated $0$s, the 
CA evolves as {\it Rule 1\/}. 
\item  {\it Rule 46\/}:  There is no isolated $1$ at time $t=1$; thereafter, the 
CA evolves as {\it Rule 42\/}.
\end{enumerate}
\end{prop}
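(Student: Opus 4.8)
The plan is to treat each of the three items as a statement about the CA trajectory $\ca_t$ alone (the defect dynamics plays no role in the Proposition itself), so the whole argument reduces to finite verifications on the local update function $\phi$. The unifying observation is that in each case the ``target'' rule ($2$, $1$, $42$) differs from the rule under consideration ($24$, $33$, $46$) only on a single local window, and that window is exactly the structure named in the statement (a pair of adjacent $1$s, an isolated $0$, or an isolated $1$). Thus the Proposition amounts to showing that this distinguishing window is absent after a one-step transient and, crucially, stays absent forever, so that the coincidence of the two rules propagates by induction.

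For \emph{Rule 24} ($011,100\mapsto 1$, all other windows $\mapsto 0$), I would first rule out two adjacent $1$s at $t=1$: a $1$ at $x$ is produced only from the window $011$ or $100$, and in either of these two cases the window governing the site immediately to the right (resp.\ left) is forced into the set of strings mapping to $0$; a short two-case check then shows that the states at $x$ and $x+1$ at time $1$ cannot both be $1$. Next, on any configuration with no adjacent $1$s the only occurring windows are $000,001,010,100,101$, and on precisely these $\phi$ coincides with the left-shift rule taken up to the left--right reflection fixed in Section~\ref{ECA-section} (the occurring windows force $\phi$ to act as the reflected \emph{Rule 2}). One then checks that the image again has no adjacent $1$s, so this identification persists for all $t\ge 1$ by induction; this is also why the recorded MLE direction is $+1$ rather than $-1$.

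For \emph{Rule 33} ($000,101\mapsto 1$, else $\mapsto 0$), item~(1) is a backward computation: assuming an isolated $0$, i.e.\ the window $101$ centered at $(x,t)$ with $t\ge 1$, I would write down the three windows controlling sites $x-1,x,x+1$ at time $t-1$ and solve the resulting Boolean constraints; they force the pattern $10101$ on $[x-2,x+2]$ at time $t-1$, which exhibits isolated $0$s at $x\pm 1$. Taking the contrapositive shows that ``no isolated $0$s'' is preserved from one step to the next. For item~(2), note that \emph{Rule 33} and \emph{Rule 1} differ only on the window $101$; hence on configurations without isolated $0$s they coincide, and the invariance just established guarantees the agreement continues for all later times, so the trajectory evolves as \emph{Rule 1}.

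For \emph{Rule 46} ($001,010,011,101\mapsto 1$, else $\mapsto 0$), I would prove the stronger fact that $\Phi$ \emph{never} produces an isolated $1$: supposing the window $010$ centered at $x$ at some time, one runs through the four windows that could set the center to $1$, and in each case the window governing $x-1$ or $x+1$ is forced into the $1$-set, contradicting that $x\pm 1$ are $0$. Since this holds for every image, $\ca_t$ has no isolated $1$ for all $t\ge 1$, giving the first assertion; and since \emph{Rule 46} and \emph{Rule 42} differ only on the window $010$, they coincide on configurations with no isolated $1$, so combined with the invariance the \emph{Rule 46} trajectory from $t=1$ onward equals the \emph{Rule 42} trajectory. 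The main obstacle here is not any single computation---each is a finite check on $\phi$---but getting the logical scaffolding right: in every case it is essential to verify not merely that the distinguishing window is absent after the transient, but that its absence is \emph{invariant}, so that the reduction to the simpler rule holds for all times rather than for one step. For \emph{Rule 33} this invariance is precisely the contrapositive of item~(1); for \emph{Rules 24} and~\emph{46} it follows because the global map always lands in configurations avoiding the forbidden window; a minor secondary point is reconciling the left--right reflection in the \emph{Rule 24} case with the equivalence convention of Section~\ref{ECA-section}.
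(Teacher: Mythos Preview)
Your proposal is correct and takes essentially the same approach as the paper, whose proof consists of the single sentence ``These are all straightforward verifications''; you simply supply those verifications in full, including the useful observation that the coincidence with the target rule persists for all later times because the distinguishing local window is invariantly absent. Your remark that in the \emph{Rule 24} case the trajectory literally matches \emph{Rule 16} (the left--right reflection of \emph{Rule 2}), consistent with the $+1$ MLE direction in Table~\ref{ECA-marginal} and the equivalence convention of Section~\ref{ECA-section}, is a correct clarification of the paper's slightly informal wording.
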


\begin{proof}
These are all straightforward verifications.
\end{proof}

\begin{prop}\label{rule152-marginal}
Assume the CA is {\it Rule 152\/}.  States $11$ at $(x,t)$, $(x+1,t)$, $t\ge 1$ require $111$ 
at $(x,t)$, $(x+1,t)$, $(x+2,t)$; if a configuration has only isolated $1$s the CA evolves 
as {\it Rule 16\/}, which is equivalent, via a left-right reflection, to {\it Rule 2\/}. 
Furthermore, if $\ca_0$ is the 
uniform product measure, then almost surely there exists an $x$ such that there is no $11$ 
in $[x-2+t, x+t+2]$ for all $t$. Consequently, {\it Rule 152\/} is marginal.
\end{prop}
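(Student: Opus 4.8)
The plan is to isolate the combinatorial mechanism, promote it to an almost-sure geometric statement about $\xi_0$, and then convert that into confinement of the defect shape.

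First I would record the one-step structure of \emph{Rule 152} directly from its update table. A short case check on the four possible values of $(\xi_t(x-1),\xi_t(x+2))$ shows that the only predecessor pattern producing two adjacent $1$s is $111$: if $\xi_t(x)=\xi_t(x+1)=1$ with $t\ge 1$, then necessarily $\xi_{t-1}(x)=\xi_{t-1}(x+1)=\xi_{t-1}(x+2)=1$ (this is the stated claim, with the predecessor read at time $t-1$). Iterating this backwards forces $\xi_0\equiv 1$ on $[x,x+t+1]$, i.e.\ a run of at least $t+2$ consecutive initial $1$s anchored at left endpoint $x$. Complementarily, the forward evolution of an isolated block of length $\ell$ is transparent: at each step it loses its rightmost cell (shedding an isolated $1$ that thereafter obeys \emph{Rule 16}) while keeping its left endpoint fixed, so it exhibits a $11$ only at $[a,a+1]$ and only for $0\le t\le \ell-2$. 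The equivalences in the statement are immediate from the table as well, since with no adjacent $1$s the only firing neighborhood is $100$, which is precisely \emph{Rule 16} (the left--right reflection of \emph{Rule 2}). Combining the backward necessity with the forward picture yields the exact description: a $11$ sits at $(y,t)$ iff $y$ is the left endpoint of an initial run of length $\ge t+2$.

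Next I would quantify the space--time footprint of each run. A run $(a,\ell)$ makes its $11$ visible to the moving window $[x-2+t,x+2+t]$ at some $t\le\ell-2$ exactly when $x\in[a-\ell+1,a+2]$; call this the set of window-origins blocked by the run. A site $x$ is \emph{good} precisely when no run blocks it. I would bound $\P(0\text{ good})$ below by a fixed positive constant: conditioning on $\xi_0(0)=0$ decouples the finitely many short runs near the origin (a local event of positive probability) from the long runs to the right, whose total contribution is controlled by the convergent geometric tail $\sum_{a\ge 1}\P(\text{run at }a\text{ of length}\ge a+1)\le\sum_{a\ge1}2^{-(a+1)}<1$. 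Since the uniform product measure is shift-ergodic and ``$x$ good'' is the shift of ``$0$ good,'' Birkhoff's theorem upgrades this to a positive density of good sites, hence to the almost-sure existence of good sites tending to both $+\infty$ and $-\infty$. This establishes the displayed almost-sure statement.

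Finally I would use a good window as a moving barrier, in the spirit of Theorem~\ref{wc-ub}. Because $\cN=\{-1,0,1\}$, defects never spread at speed exceeding $1$, so the right edge of the defect shape is automatically at $\alpha\le 1$; the entire content is to push the left edge from the trivial bound $\alpha\ge -1$ up to $\alpha\ge 1$. For this I would place a good window strictly to the left of the finite initial defect set and maintain the invariant $\de_t\subseteq[x-2+t,\infty)$, using that inside the window the configuration carries only isolated $1$s and so evolves by \emph{Rule 16}. Granting that no defect crosses the window, all defects remain to its right, the left edge of $W$ lands at $\alpha\ge 1$, and together with the trivial right bound this gives $W\subseteq\{1\}$; hence $L_\infty\equiv-\infty$ off $\alpha=1$, the rule is not expansive, and (non-collapse being clear, since a shed isolated $1$ supports a persistent right-moving defect) \emph{Rule 152} is marginal with MLE direction $1$. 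The hard part is exactly the no-crossing claim: unlike rightward motion, leftward defect spread is not bounded by the window speed, and a defect reaching the window's left edge can a priori move one step further left, threatening the invariant. The point to make rigorous is that sustained leftward propagation under \emph{Rule 16} would require an incoming supply of $1$s immediately to the left of the advancing defect, whereas all isolated $1$s drift rightward; thus each leftward excursion is transient, and with the window taken of width $5$ it is too short to carry a defect across before the window overtakes it.
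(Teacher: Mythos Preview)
Your overall strategy mirrors the paper's: establish the combinatorics of $11$-blocks, prove that ``good'' windows (moving right at speed~$1$ and free of $11$) exist almost surely, and use one as a barrier. However, your execution differs and contains errors.

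\textbf{On the combinatorics.} Your claim that a $11$ sits at $(y,t)$ if and only if $y$ is the \emph{left endpoint} of an initial run of length $\ge t+2$ is incorrect: the backward induction gives $\xi_0\equiv 1$ on $[y,y+t+1]$, with no constraint that $\xi_0(y-1)=0$. Thus $11$ appears at $(y,t)$ for \emph{every} $y$ with $[y,y+t+1]$ inside some run, and a run $(a,\ell)$ blocks a window-origin $x$ on the larger range $[a-\ell+1,a+\ell]$ rather than your $[a-\ell+1,a+2]$. In particular, runs entirely to the \emph{left} of $0$ (those ending at $-1$) also block, so conditioning on $\xi_0(0)=0$ alone does not decouple them; you need something like $\xi_0\equiv 0$ on $[-2,K]$ before the geometric tail bound bites. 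With that fix, your union bound does give $\P(0\text{ good})>0$, and then Birkhoff's theorem is a legitimate and arguably cleaner substitute for the paper's dyadic construction $B_x\subset A_x$ and second-moment argument.

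\textbf{On the barrier step.} You correctly identify the no-crossing claim as the crux, but you do not prove it. The heuristic that ``leftward excursions are transient because isolated $1$s drift right'' is plausible, yet the defect dynamics of \emph{Rule 152} in a $11$-free region is \emph{not} the defect dynamics of \emph{Rule 16}: a defect at $y$ spreads to $y-1$ whenever $\xi_t(y-2)\xi_t(y-1)\ne 00$, which is strictly more permissive than \emph{Rule 16}'s sensitivity. A window configuration such as $01010$ (which is permitted by ``no $11$'') can, in principle, sustain repeated leftward steps, and the five-cell width is not obviously enough. The paper treats this implication as a ``simple check'' without detail; to turn your outline into a proof you would need a concrete invariant (for instance, tracking the leftmost defect relative to the rightmost $1$ in the window and showing the gap cannot close), not just the informal transience picture.
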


\begin{proof}
These are simple checks, other than the last statement. To prove the latter, 
let $A_x$ be the event that the initial configuration is $00000$ in $[x,x+4]$ and that, 
for every $n\ge 0$, the interval $[x+5+n, x+5+2n]$ contains
at least one $0$. 
It suffices to show that 
\begin{equation}\label{equivalent-rules-eq1}
\P(A_x\text{ happens i.o.~for }x\ge 0)=\P(A_x\text{ happens i.o.~for }x\le 0)=1.
\end{equation}
Let $B_x$ be the event that $[x,x+4]$ contains only $0$s 
and that the following holds for any interval 
$I_{x,k}=[x+5+2^k, x+5+2^{k+1}-1]$ of length $2^k$: if $0\le k\le 4$, the
entire $I_{x,k}$ is covered by $0$s; and if $k>4$, each of the four disjoint subintervals 
of $I_{x,k}$ of length $2^{k-4}$ contains at least one $0$. We claim that 
$B_x\subset A_x$. 
Indeed, if $2^k\le n< 2^{k+1}$, then the interval $[x+5+n, x+5+2n]$ has its left 
endpoint in $I_{x,k}$ and length at least $2^k+1$. Then it either covers the right half
of $I_{x,k}$ or the left quarter of $I_{x,k+1}$. 

Now, let
$$
a=\P(B_0)=2^{-20}\prod_{k=5}^{\infty} \left(1-2^{-2^{k-4}}\right)^4>0.
$$
Then $\P(B_x)=a$ for every $x$. Moreover, for a large $r$, chose 
the largest $\ell$ so that $r\ge 5+2^\ell$; then 
$$
a\le \P(B_{x}|B_{x+r})\le \frac{a}{\prod_{k\ge \ell}  \left(1-2^{-2^{k-4}}\right)^4}\le a (1+c2^{-r}), 
$$
for some constant $c>0$. The second moment method now easily proves (\ref{equivalent-rules-eq1}) 
with $B_x$ in place of $A_x$ and ends the proof. 
\end{proof}

\begin{prop}\label{rule27-marginal} Assume the CA is 
{\it Rule 27\/}. Assume  that $\ca_0$ and $\de_0$ both vanish 
on $[a,b]$, where $b-a\ge 5$. 
Then for all even $t$, $\ca_t$ and $\de_t$ both vanish on 
$[a+t/2, b+t/2-4]$. Consequently, this rule is marginal.
\end{prop}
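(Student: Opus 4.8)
The plan is to prove the displayed confinement by induction on even $t$, and then to deduce marginality from it. Throughout write $\phi$ for the local rule of \textit{Rule 27}, so that $\phi(s_{-1},s_0,s_1)=1$ exactly on the neighborhoods $000,001,011,100$ (Table~\ref{ECA-update}). The governing phenomenon is that a block of $0$s is unstable: since $000\mapsto 1$ and $111\mapsto 0$, the interior of an empty interval oscillates between all-$0$ (at even times) and all-$1$ (at odd times) with period $2$, which is exactly why the assertion is restricted to even $t$. First I would isolate two one-step ``transfer'' facts obtained by reading off $\phi$, and then compose them.

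\emph{Transfer $0\to1$}: if $\ca_t=\de_t\equiv 0$ on an interval $[p,q]$, then $\ca_{t+1}\equiv 1$ and $\de_{t+1}\equiv 0$ on $[p,q]$. The value part is immediate from $\phi(0,0,0)=\phi(1,0,0)=\phi(0,0,1)=1$, which covers the interior and both endpoints; the defect part uses that $u\mapsto\phi(u,0,0)$ and $u\mapsto\phi(0,0,u)$ are constant, so the two endpoint updates are insensitive to a flip of the single neighbor lying outside $[p,q]$, giving $\change_t(p-1,p)=\change_t(q+1,q)=0$ and hence no defect entering. \emph{Transfer $1\to0$}: if $\ca_t\equiv 1$ and $\de_t\equiv 0$ on $[p,q]$, then $\ca_{t+1}\equiv 0$ and $\de_{t+1}\equiv 0$ on $[p+1,q]$, with the right endpoint $q$ protected because $u\mapsto\phi(1,1,u)$ is constant ($110,111\mapsto 0$), so again $\change_t(q+1,q)=0$. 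Composing these over two steps shows that an empty block $[p,q]$ at an even time produces an empty block $[p+1,q]$ two steps later: the \textbf{left} edge advances rightward by exactly one per two steps with no loss, which accounts for the left endpoint $a+t/2$ in the statement.

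The remaining, essential point is the \textbf{right} edge, and this is where the offset $-4$ enters and where I expect the main obstacle to be. In contrast to the left endpoint, the map $u\mapsto\phi(u,1,1)$ is \emph{not} constant ($011\mapsto1$, $111\mapsto0$), so the transfers above do not by themselves push the right boundary of the empty region rightward; whether it advances over two steps depends on the uncontrolled configuration just outside the hole. A direct reading of $\phi$ shows that, starting from an empty block with right edge $R$ at an even time, the right edge two steps later is either $R+1$ or $R$, and that a ``stall'' (the value $R$) forces the local pattern $\ca=1,0,1$ on $[R+1,R+3]$ at that even time. The heart of the argument is to show these stalls are transient: one treats the interface as a bounded-memory finite automaton --- equivalently, runs $\phi$ on all configurations of a few cells to the right of the interface, precisely the finite verification described after Theorem~\ref{wc-ub} --- and checks that after a stall the obstruction is consumed and advancement resumes, with cumulative delay never exceeding $4$. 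This yields $\ca_t=\de_t\equiv 0$ on an interval whose right edge is at least $b+t/2-4$, completing the induction; the hypothesis $b-a\ge 5$ then guarantees that $[a+t/2,\,b+t/2-4]$ has width $b-a-3\ge 2$ and so is nonempty at every even $t$.

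Finally I would deduce marginality. Since $\de_0$ has finite support, for a product initial state there exist almost surely all-$0$ windows of $\ca_0$ of length at least $6$ both to the left and to the right of $\mathrm{supp}(\de_0)$; on each such window $\de_0\equiv 0$ as well, so the proposition applies and produces a defect-free interval of width $\ge 2$ whose position drifts rightward at speed $1/2$. Because defects spread at speed at most $1$ per step while such an interval moves right by $1$ every two steps, no defect can ever cross a width-$\ge 2$ interval that is empty at every even time: a defect strictly left of the interval at an even time is still strictly left of it two steps later. The right window thus confines all defects to $\{x/t\le 1/2\}$ and the left window confines them to $\{x/t\ge 1/2\}$, so $W\subseteq\{1/2\}$ and $L_\infty(\alpha)=-\infty$ for $\alpha\neq 1/2$. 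As defects persist, the dynamics is not collapsing, and hence \textit{Rule 27} is marginal with MLE direction $1/2$.
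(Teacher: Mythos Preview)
Your forward ``transfer'' lemmas are correct and they do pin down the left edge: an all-$0$ interval $[p,q]$ at an even time becomes an all-$0$ interval containing $[p+1,q]$ two steps later, with no defect intrusion. The difficulty, as you recognize, is entirely at the right edge, and here your plan does not go through. You propose to treat the interface as a bounded-memory automaton and verify by the exhaustive check after Theorem~\ref{wc-ub} that the cumulative stall never exceeds~$4$. But that verification only certifies rules for which a block $B$ \emph{deterministically} reproduces itself with a shift regardless of its environment; Rule~27 is precisely one of the cases (alongside Rule~152) where no such $B$ exists, which is why it needs its own proposition. Concretely, whether the right edge advances over two steps depends on a few cells to the right of the current interface, and those cells are in turn fed by cells still further right at earlier times. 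A forward analysis over any fixed window cannot rule out an adversarial environment that repeatedly injects the obstructing pattern, so your claim that ``after a stall the obstruction is consumed and advancement resumes'' cannot be established this way; there is nothing in a bounded window that prevents the stall count from growing without bound.

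The paper's argument supplies the missing global ingredient by working \emph{backward} in time. One first shows (by a short predecessor analysis of the pairs $10$ and $01$) that the pattern $1010$ can occur only in $\ca_0$, and then proves by induction that any occurrence of $0101$ at an even time $t$ forces a specific chain of predecessor configurations all the way back to time~$0$, none of which contain the substring $000$. Since the hypothesis gives $\ca_0\equiv 0$ on $[a,b]$ (and in particular a $000$), this rules out $0101$ in the entire wedge $[a-t,\,a+t/2+3]$ at every even~$t$. With $0101$ and $1010$ both excluded near the interface, a straightforward two-step check (your ``transfer'' together with one extra cell to the right) shows that the right edge advances by exactly one every two steps after an initial loss of at most four cells. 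In short: your argument is fine for the left edge and for the deduction of marginality, but the right-edge step needs the backward predecessor analysis of $0101$, not a forward finite verification.
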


\begin{proof} We begin with a few observations. Assume that $t\ge 1$ and that the pair configuration 
$10$, underlined in (\ref{rule27-pred0}), appears in $\xi_t$. Then 
there are two possibilities for the nearby states in $\xi_{t-1}$ (represented by the 
top line) and $\xi_t$, as depicted in
(\ref{rule27-pred0}). An analogous property, also given in (\ref{rule27-pred0}), holds for the pair $01$.
\begin{equation}\label{rule27-pred0}
\begin{aligned}
&011&\quad 0010         &\qquad\qquad&1011&\;\;\quad 100\\[-2mm]
&\phantom{0}\underline{10}&\quad 1\underline{10}&\qquad&0\underline{01}&\;\;\quad \underline{01}1
\end{aligned}
\end{equation}
It immediately follows that $1010$ is only possible in the initial state. 
Assume next that $0101$ occurs in $[1,4]$ in $\ca_t$. 
Then we claim that for any $k\ge 0$ and time $t-2k\ge 0$, the configuration 
in $[t-k, t+2k+4]$ is 
\renewcommand{\lsquare}{\text{\scalebox{3}[1]{$\square$}}}
\begin{equation}\label{rule27-pred}
\lsquare\ldots\lsquare00101
\end{equation}
where there are $k$ $\lsquare$ blocks of length $3$, each containing either $001$ 
or $011$. We also claim that at time $t-2k-1$ the configuration at $[t-k, t+2k+5]$ 
must be 
\begin{equation}\label{rule27-pred1}
\lsquare\ldots\lsquare101100
\end{equation}
where now each of the $k$ $\lsquare$ blocks of length $3$ contains either $100$ 
or $101$. Our induction hypothesis is that both (\ref{rule27-pred}--\ref{rule27-pred1})
are satisfied at each $k\ge 0$. For $k=0$, this is an easy verification 
using (\ref{rule27-pred0}).
The induction step is also straightforward using the fact that the update rule satisfies
$00*\mapsto1$, $*10\mapsto 0$, 
and $1\!*\!1\mapsto 0$.

We now state four key facts. The first two are about the original CA and the next two 
about the defect percolation CA. The first fact follows from the claim above, 
while the remaining three are straightforward. 
\begin{itemize}
\item
As (\ref{rule27-pred}) does not contain $000$,  
if $\ca_0$ vanishes on $[x,x+2]$ in $\ca_0$, then the state of $\ca_t$ cannot 
contain $0101$ on
the interval $[x-t, x+t/2+3]$ for any even $t\ge 0$. 
\item
Suppose
that $\ca_0(0)=0$ and the five state configuration of $\ca_0$ in $[-1,3]$ 
contains neither  $0101$ nor $1010$. Then $\ca_2(1)=0$. 
\item
If $\ca_0$ vanishes on $[0,1]$ and $\de_0(0)=0$, then $\ca_2(1)=0$ and 
$\de_2(1)=0$. 

\item If $\ca_0$ and $\de_0$ both vanish on $[0,1]$, 
and $\ca_0$ is not $101$ on $[2,4]$, then $\ca_2$ vanishes on 
$[1,2]$ and $\de_2(1)=0$.  
\end{itemize}
The above four facts establish the claimed ``non-invasion'' of the interval of 
$0$s in the statement, and marginality easily follows. 
\end{proof}

\subsection{Elementary CA with expansive defect dynamics}\label{ECA-subsection-expansive}

There is overwhelming empirical evidence that the 22 rules in Table~\ref{ECA-expansive}
are expansive. For nine of these cases we provide a proof: four are additive
or nearly additive (rules {\it 60\/}, {\it 90\/}, {\it 105\/}, and {\it 150\/}), four 
more are handled by Theorem~\ref{wc-lb}  (rules {\it 30\/}, {\it 45\/}, {\it 54\/}, and {\it 57\/}),
and {\it Rule 38\/} is the subject of 
our next result. This last rule is a stripes CA, as a 
disordered state self-organizes into a {\it random\/} configuration which is merely shifted 
(see Section~\ref{prelim-defdam} for a formal definition). With some confidence we conjecture (although we do not have a
proof) that rules {\it 6\/}, {\it 25\/}, 
{\it 26\/},  {\it 41\/}, {\it 57\/}, {\it 62\/}, 
{\it 134\/} and {\it 154\/} are also stripes CA. In
Section~\ref{ECA-subsection-dendep}, we will see that 
these rules are also characterized by the dependence of MLE on 
the initial density of $1$s in $\ca_0$, as expected from the discussion 
in Section~\ref{prelim-defdam}.

Table~\ref{ECA-expansive} gives (in most cases empirical) estimates of the MLE, its direction, 
defect shape $W$, and the defect density $\rho$ on $W$, which appears constant in all cases. 
Fig.~\ref{figure30-106} depicts Lyapunov profiles for {\it Rule 30\/} and {\it Rule 106\/}, two rules 
that leave the uniform product measure invariant. See Section~\ref{ECA-subsection-dendep}  
for a discussion on {\it Rule 62\/}.

\begin{figure}[!ht]
\begin{center}
\includegraphics[trim=0cm 0.2cm 0cm 0cm, clip, width=6cm]{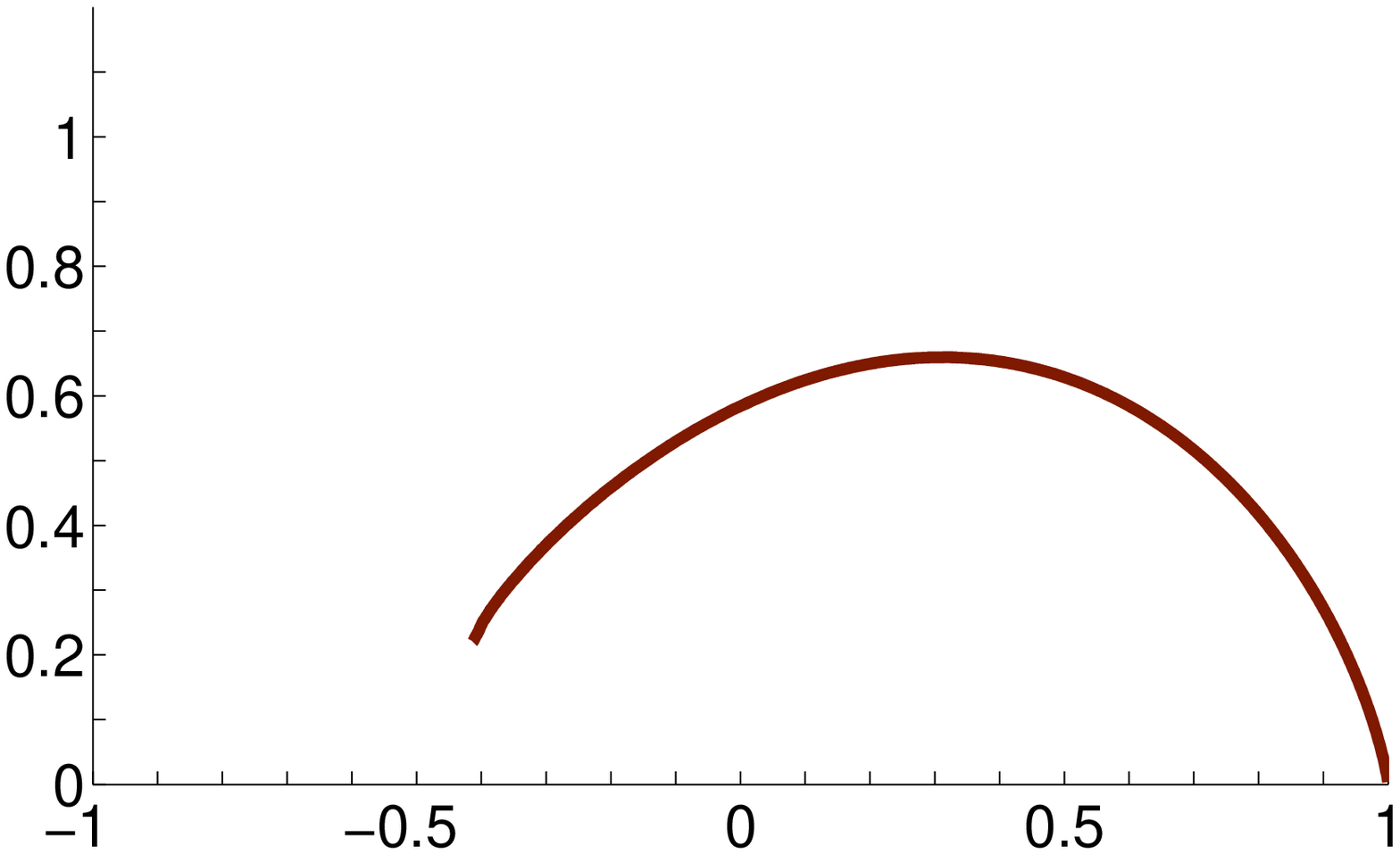}
\hspace{0.2cm}
\includegraphics[trim=0cm 0.2cm 0cm 0cm, clip, width=6cm]{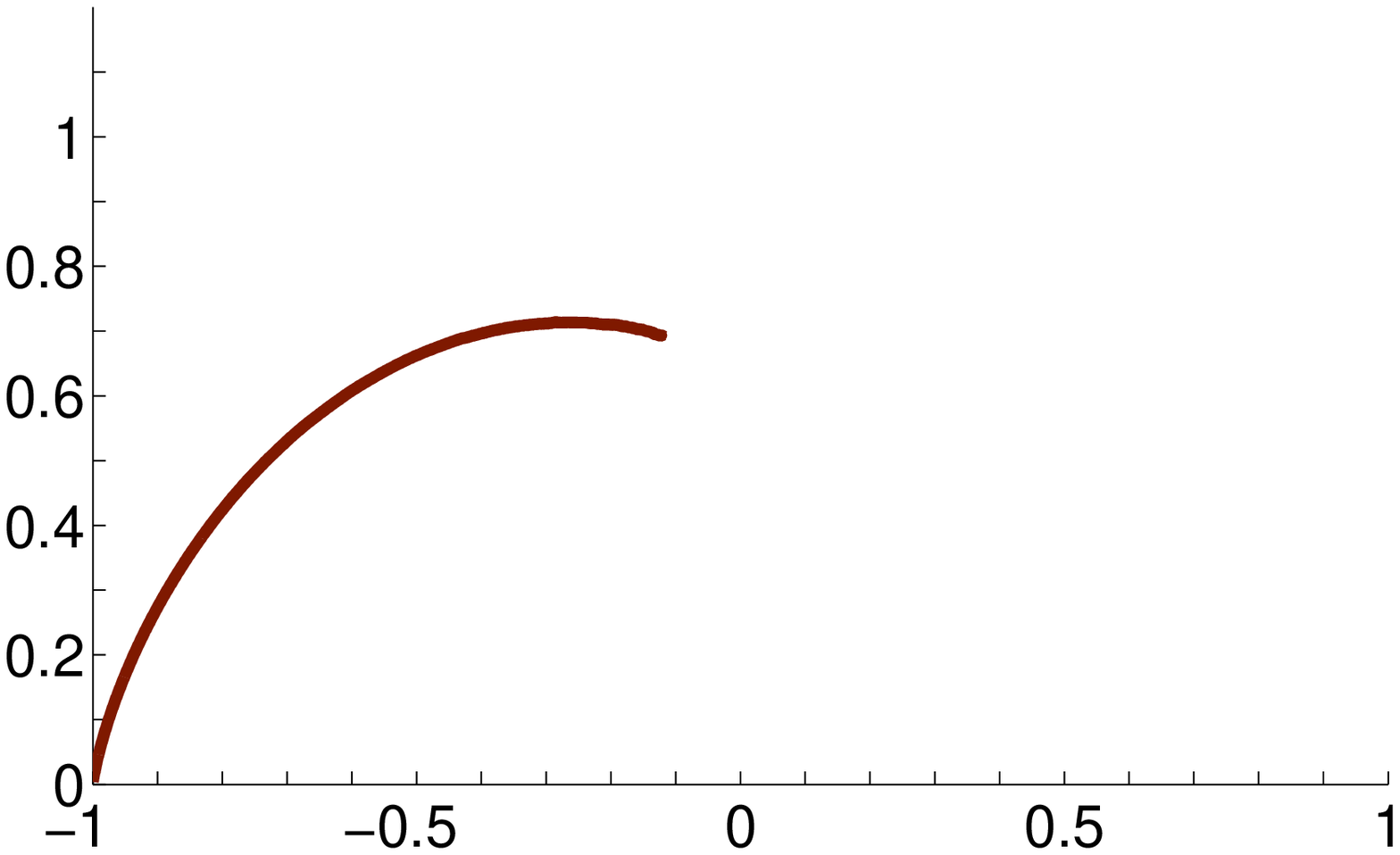}
\end{center}
\caption{Empirical Lyapunov profiles for rules {\it 30\/} and {\it 106\/} at $t=10^5$.} \label{figure30-106}
\end{figure}

\begin{table}[!ht]
\caption{The $23$ expansive rules.}\label{ECA-expansive}
\vspace{0.0cm}
\centering
\begin{tabular}{| c | c | c|c|c| c| c|}
			\hline
 \begin{tabular}{@{}c@{}}Rule\end{tabular}
& \begin{tabular}{@{}c@{}}class 
\end{tabular}
& \begin{tabular}{@{}c@{}}proof
\end{tabular}
& \begin{tabular}{@{}c@{}}MLE dir. 
\end{tabular}
& \begin{tabular}{@{}c@{}}MLE
\end{tabular}
&\begin{tabular}{@{}c@{}}$W$
\end{tabular}
&\begin{tabular}{@{}c@{}}$\rho$
\end{tabular}
  \\
			\hline
			\hline
{\it 6\/} & E & --- & $-0.29$ & $0.55$  &$[-1,0.36]$ & $0.84$\\ \hline
{\it 18\/} & E & --- & $0$ & $0.69$   &$[-1,1]$ & $0.5$\\ \hline
{\it 22\/} & E & --- & $0$ & $0.87$   &$[-0.74,0.74]$ & $0.86$\\ \hline
{\it 25\/} & E & --- & $-0.17$ & $0.52$   &$[-0.83,0.5]$ & $1$\\ \hline
{\it 26\/} & E & --- & $-0.32$ & $0.41$   &$[-1,0.23]$ & $1$\\ \hline
{\it 30\/} & E & $\cM=\{1,3\}$, $t_\cM=3$ & $0.31$ & $0.66$   &$[-0.41,1]$ & $1$\\ \hline
{\it 38\/} & E & Prop.~\ref{rule-38} & $-0.41$ & $0.54$   &$[-1,0.1]$ & $1$\\ \hline
{\it 41\/} & E & --- & $0.02$ & $0.86$   &$[-0.75,1]$ & $0.94$\\ \hline
{\it 45\/} & E & $\cM=\{0,2\}$, $t_\cM=2$ & $0.22$ & $0.72$   &$[-0.48,1]$ & $1$\\ \hline
{\it 54\/} & E & $\cM=\{0,\pm 1\}$, $t_\cM=3$ & $0$ & $0.74$   &$[-0.85,0.85]$ & $1$\\ \hline
{\it 57\/} & E & $\cM=\{0,\pm 1\}$, $t_\cM=3$ & $0$ & $0.69$   &$[-1,1]$ & $1$\\ \hline
{\it 60\/} & E & additive & $1/2$ & $0.69$ &$[0,1]$ & $1$  \\ \hline
{\it 62\/} & E & --- & $0$ & $0.44$ &$[0,0.53]$& $1$\\ \hline
{\it 90\/} & E & additive & $0$ & $0.69$   &$[-1,1]$ & $1$\\ \hline
{\it 105\/} & E & additive with toggle & $0$ & $1.1$   &$[-1,1]$ & $1$\\ \hline
{\it 106\/} & E & --- & $-0.26$ & $0.71$   &$[-1, -0.11]$ & $1$\\ \hline
{\it 110\/} & E & --- & $-0.25$ & $0.66$   &$[-0.88, 0.67]$ & $1$\\ \hline
{\it 122\/} & E & --- & $0$ & $0.65$   &$[-1, 1]$ & $1$\\ \hline
{\it 126\/} & E & --- & $0$ & $0.71$   &$[-1,1]$ & $1$\\ \hline
{\it 134\/} & E & --- & $-0.21$ & $0.51$   &$[-1,0.49]$ & $0.81$\\ \hline
{\it 146\/} & E & --- & $0$ & $0.69$   &$[-1,1]$ & $0.5$\\ \hline
{\it 150\/} & E & additive & $0$ & $1.1$   &$[-1,1]$ & $1$\\ \hline
{\it 154\/} & E & --- & $-0.42$ & $0.48$   &$[-1,0.11]$ & $1$\\ \hline
\end{tabular}
\end{table}

We should also mention that 
it is easy to check that {\it Rule 154\/} and 
{\it Rule 106\/} are right permutative \cite{GG5} and 
thus 
at least not collapsing, with $\cM=\{-1\}$, $t_\cM=1$. 
In fact, due to the $\limsup$ in the definition of $L$ (\ref{lyapunov-profile-def}), there are seven other 
rules that are provably not collapsing as a defect at the origin must generate at least one successor,
although its location varies with $\xi_0$. These rules are {\it 37\/}, 
{\it 41\/}, {\it 56\/},
{\it 62\/}, {\it 110\/}, {\it 134\/}, {\it 146\/}, and {\it 184\/}. 

Another remark is that the three quasi-additive rules studied by E.~Jen \cite{Jen}, 
{\it 18\/}, {\it 146\/} and {\it 126\/}, all feature annihilating dislocations 
that make the CA approach {\it Rule 90\/}. This apparently causes 
the Lyapunov profile to be 
indistinguishable from the one for {\it Rule 90\/} for the first 
two rules (thus the MLE is $\log 2$), but not for {\it Rule 126\/} whose defect 
dynamics differs from that for {\it Rule 90\/} even in the invariant state. 

\begin{prop}\label{rule-38}
Assume the CA is {\it Rule 38\/}. If $\De_0\subset[-r,r]$,  then 
$\damage_t\subset[-t-r, -t+r+3]$. 
On the other hand, the defect accumulation dynamics is expansive; in fact, 
$L_\infty$ is strictly positive on $(-1,\alpha_r]$, where $\alpha_r=308/2977$ and $W=[-1,\alpha_r]$. 
\end{prop}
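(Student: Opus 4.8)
The plan is to treat the two assertions separately: the deterministic bound on $\damage_t$ is a finite worst-case verification, while the identity $W=[-1,\alpha_r]$ together with $L_\infty>0$ on $(-1,\alpha_r]$ rests on the near-permutativity and stripes structure of {\it Rule 38\/}, fed into Proposition~\ref{L-W}. Throughout I would first record the two sensitivity properties of the local map: it is sensitive to its right argument except on inputs of the form $11\ast$ (where $\phi(1,1,\cdot)\equiv 0$), and it is sensitive to its left argument only when its center is $1$ and its right is $0$ (since $010\mapsto1$ while $110\mapsto0$, and the other three pairs agree).

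For the damage bound, write $\ca'_t=\ca_t\oplus\damage_t$ and let $R_t=\max\{x:\ca_t(x)\ne\ca'_t(x)\}$ be the rightmost disagreement. The left-sensitivity fact shows $R_{t+1}=R_t+1$ can occur only when $\ca_t(R_t+1)=1$ and $\ca_t(R_t+2)=0$, and that $R_{t+1}\le R_t$ otherwise. The left edge is controlled for free by the light cone (damage travels at speed at most $1$ since $\cN=\{-1,0,1\}$), giving $\min\damage_t\ge -t-r$. The content is therefore to bound how often and by how much the right edge advances; because {\it Rule 38\/} destroys the exploitable $10$ patterns quickly (for instance $1010\cdots\mapsto 1\cdots1\mapsto 0\cdots0$), each rightward advance is self-limiting. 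I would make this precise by running the joint dynamics of $(\ca_t,\ca'_t)$ on a window of width $\le 4$ around the right edge: this is a finite automaton, and a direct check shows that the co-moving coordinate $R_t+t$ never exceeds $r+3$. This yields $\max\damage_t\le -t+r+3$, so $\damage_t\subset[-t-r,-t+r+3]$ and hence $W_\damage\subseteq\{-1\}$.

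For the defect statement I would use Proposition~\ref{L-W}, which reduces $W=[-1,\alpha_r]$ to showing $L_\infty(\alpha)=-\infty$ for $\alpha\notin[-1,\alpha_r]$ together with $L_\infty>0$ on $(-1,\alpha_r]$; upper semicontinuity then forces $L_\infty(-1)\ge0$, so $\{L_\infty\ge0\}=[-1,\alpha_r]$ and the rule is expansive. The bound $L_\infty=-\infty$ on $(-\infty,-1)$ is again the light cone. The right bound is the crux: a defect at $y$ produces a right successor at $y+1$ only when $\ca_t(y+1)=1$ and $\ca_t(y+2)=0$ (the same left-sensitivity condition), so every rightward defect worldline must spend a $10$ pattern at each right step. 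I would encode the legal rightward worldlines as paths in a finite graph whose vertices are the local windows of $\ca_t$ that are compatible with {\it Rule 38\/} evolution, and show the maximal average rightward speed along such a path is exactly $\alpha_r=308/2977$, via a finite fastest-cycle computation; the extremal cycle, of temporal period $2977$ and net displacement $308$, pins down the closed form. As this holds for every background, $L_\infty=-\infty$ on $(\alpha_r,\infty)$. For the matching lower bound I would invoke the stripes property: from a product measure the trajectory a.s.\ organizes into a left-shifted sample of a stationary ergodic field (isolated structures move left at unit speed and longer blocks collapse to such structures), turning the defect process into a branching walk in a quenched random environment in the spirit of \cite{Big,BNT,BD}. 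In that environment, each $\alpha\in(-1,\alpha_r)$ is reached by a positive density of space-time sites at which defects both advance and branch, and a large-deviation / second-moment estimate on the number of defect-carrying worldlines gives $L_\infty(\alpha)>0$; the endpoint $\alpha_r$ follows by showing the extremal cycle occurs densely enough to be surfed by exponentially many defects.

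I expect the main obstacle to be the exact evaluation of the rightmost speed $\alpha_r$: one must correctly characterize which local windows are reachable under {\it Rule 38\/}, so that the fastest-cycle graph is the right one, and then verify that the random environment realizes this extremal cycle frequently enough for the lower bound to match the deterministic upper bound, including at the endpoint $\alpha_r$ where positivity of the rate is needed. Establishing the stripes self-organization almost surely, and uniformly enough to support the large-deviation lower bound, is the secondary difficulty, though it is standard in spirit and parallels the block-collapse analysis already sketched above.
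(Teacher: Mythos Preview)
Your sensitivity analysis is correct, and the overall architecture (damage bound first, then defect shape via the right-edge speed) matches the paper. But there is a genuine gap in how you obtain $\alpha_r=308/2977$.

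You describe $\alpha_r$ as the maximal average rightward speed in a finite ``fastest-cycle'' graph, with ``the extremal cycle, of temporal period $2977$ and net displacement $308$'', and you assert this holds ``for every background''. That is not what $\alpha_r$ is. The number $308/2977$ is a \emph{probabilistic} quantity, the almost-sure right-edge speed under the uniform product measure, and it does not arise from any deterministic cycle. The paper gets it as follows: after time $2$, the configuration $\ca_2$ (which has no $111$ and no $0101$) is described by a $24$-state Markov chain on pairs (length-$4$ window, parity); the right edge of the defect interval advances by one exactly at the visits to a certain $4$-element subset $H_1$ (resp.\ $H_2$) in alternation, and the renewal times $\tau_k$ between such visits satisfy $\frac1n\sum_{k\le n}\tau_k\to 3285/308$ a.s.\ by the ergodic theorem. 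Since the edge is at $n$ at time $\sum_{k\le n}\tau_k-n$, the speed is $308/(3285-308)=308/2977$. So the upper bound $L_\infty\equiv-\infty$ on $(\alpha_r,\infty)$ is an a.s.\ statement about the random initial state, not a worst-case bound over all backgrounds; your fastest-cycle argument cannot recover this specific rational, and the assertion that it holds for every background is false.

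Your lower bound strategy is also vaguer than needed. The paper does not use second moments or large deviations: it shows directly that the left edge of a full defect interval advances at light speed (so $-1$ is the left endpoint of $W$), that $L_\infty(\alpha_r)=\alpha_r\log 2$ by counting the doubling events along the right edge (these occur exactly when the rightmost defect sits on the middle $1$ of $010$, and the expected number per $\tau$-step is $1$), and then interpolates via paths that first ride the right edge and then a leftward diagonal of $1$'s to get $L_\infty(\alpha)\ge\frac{\alpha_r\log 2}{\alpha_r+1}(\alpha+1)>0$ on $(-1,\alpha_r]$. For the damage bound, the paper's route is also simpler than your local-window automaton: once one checks that $\ca_{t+2}$ is the left shift by $2$ of $\ca_t$ for every initial state and $t\ge 2$, both the original and the perturbed trajectories translate rigidly, so $\damage_t$ translates with them and the constant $+3$ is absorbed by the two-step transient.
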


\begin{proof}
First observe (by a simple verification) that there 
is no $0101$ in $\ca_t$, for $t\ge 1$, and then no $111$ for $t\ge 2$. We will 
assume $t\ge 2$ from now on. Any $0100$ (resp.~$0110$)
starting at $x$ 
at time $t\ge 2$ generates $0110$ (resp.~$0100$) starting at $x-1$ at time $t+1$. 
Thus the entire configuration
$\ca_{t+2}$ is obtained 
by shifting $\ca_t$ to the left by $2$. This proves the first claim.

As the rule has no stable update, a full interval of defects can only be eroded
at speed one from the edges. Assume (without loss of generality) 
that the left edge of an interval of defects of length at least 
$3$ is on an infinite diagonal 
(of slope $1$) of $1$s. Then the boundary arrangement (with a defect site $(x,t)$ underlined) 
is one of these four: $00\underline10$, $10\underline10$, $00\underline11$, $10\underline11$. In all 
cases the defect at $(x,t)$ branches into two defects, one at $(x,t+1)$ and one at $(x-1,t+1)$. 
Thus the left edge of the defect interval advances at light speed. 

There are six possible arrangements at the right edge at $(x,t)$ (underlined); we write 
$\downarrow$ when the edge stays at $x$ at time $t+1$ and $\searrow$ when it moves 
to $x+1$ (that is, when the defect branches into two):
$$
 0\underline 000\, \downarrow
\quad 0\underline 001\, \downarrow
\quad 0\underline 010\, \searrow
\quad  0\underline 011\, \downarrow
\quad  0\underline 100\, \downarrow
\quad  1\underline 100\,\downarrow
$$
Thus the right edge never retreats and advances when in contact with the diagonal in 
one of the two ``phases.''

To be more precise, we first provide a convenient Markovian description 
of $\ca_2$. Consider the set $H$ of $24$ pairs $(s,a)$, where 
$s$ is a binary strings of length $4$ that does not 
contain $111$ or $0101$, and $a$ is either $0$ or $1$. 
Call $x\in \bZ$ in a state $(s,a)$ if the string $s$ 
{\it ends\/} at $x$ and $x\in a+2\bZ$. As sites at distance $5$ or more have 
independent $\ca_2$-state, this is a Markov chain. Define the following subsets of $H$, 
\begin{equation}\label{H1-H2}
\begin{aligned}
&H_1=\{(0011,1), (1011,1), (0010,0), (1010,0)\}\\
&H_2=\{(0011,0), (1011,0), (0010,1), (1010,1)\}
\end{aligned}
\end{equation}
Start in (say) the state $(0011,0)$ at $x=0$, and consider 
the successive states of the chain given by positive integers. 
Define 
$\tau_0=0$ and then let $\tau_k$, $k=1,2,\ldots$ be the number of steps after 
$\tau_{k-1}$ needed to 
enter $H_1$ (even $k$) or $H_2$ (odd $k$). For example, if $\ca_2$ on $\bZ_+$ 
happens to be $101100110010\ldots$, then $\tau_1=3$ and $\tau_2=8$. 

By the preceding part of the proof, the right edge of $\de_t$ is at $n$ at 
time $\sum_{i=1}^n\tau_i-n$. By symmetry, almost surely,
$$
\lim_{n\to\infty}\frac 1n\sum_{i=1}^n\tau_i= 
\frac{\sum_{h\in H_1}\pi(h)\E T(h,H_2)}{\sum_{h\in H_1}\pi(h)}.
$$
Here, $\pi$ is the invariant measure and $\E T(h, H_2)$ is the expected time to reach $H_2$ from 
$h$, both readily computable by a matrix computation 
to get the limit $3285/308$. The right edge of $W$ 
then is 
$$
\lim_{n\to\infty}\frac{n}{\sum_{i=1}^n \tau_i-n}=\alpha_r.
$$

Finally, we prove the claim that $L_\infty>0$ on $(-1,\alpha_r]$. For this, it 
is sufficient to show that 
\begin{equation}\label{rule38-edgeheight}
L_\infty(\alpha_r)=\alpha_r\log 2
\end{equation}
as then, by just considering defects that accumulate on the path that first 
moves on the right edge and then on a leftward diagonal of $1$s, 
$$
L_\infty(\alpha)\ge \frac{\alpha_r\log 2}{\alpha_r+1}(\alpha+1)
$$
on $[-1,\alpha_c]$. To prove (\ref{rule38-edgeheight}), observe first 
that the only {\it Rule 38\/} update that is sensitive to a change of
both left and center input is $010\mapsto 1$. The number of paths
at the right edge thus goes up by a factor of $2$ precisely when the rightmost defect
is on the middle $1$ of $010$. The number of times this happens is
exactly the number of states in $H_1$ (resp.~in $H_2$) in $[\tau_k+1, \tau_{k+1}]$ 
for odd $k$ (resp.~even $k$). The expected number of such states is $1$, 
by elementary Markov chain theory, and so the number of paths at the right edge at time $\sum_{i=1}^n\tau_i-n$ is $2^{N_n}$ where $N_n/n\to 1$ a.s.~as $n\to\infty$. The 
claimed equality (\ref{rule38-edgeheight}) follows.
\end{proof}

While Proposition~\ref{rule-38} determines its support, a full 
characterization of the Lyapunov profile in cases
such as {\it Rule 38\/} is closely related to quenched large deviations for random walks 
in a random environment (see e.g.~\cite{Yil}).  
A computationally viable variational technique is beyond current methods
(which in particular require nondegeneracy conditions that {\it Rule 38\/} walks 
fail to satisfy)
and seems a very interesting open problem. 
 
\subsection{Classification of the remaining elementary CA}\label{ECA-subsection-remaining}

The remaining $11$ rules are gathered in Table~\ref{ECA-mystery}, with 
conjectured class and other empirical information. 

\begin{table}
\caption{The $11$ remaining rules.}\label{ECA-mystery}
\vspace{0.0cm}
\centering
\begin{tabular}{| c | c | c|c|c|}
			\hline
 \begin{tabular}{@{}c@{}}Rule\end{tabular}
& \begin{tabular}{@{}c@{}}class 
\end{tabular}
& \begin{tabular}{@{}c@{}}notes
\end{tabular}
& \begin{tabular}{@{}c@{}}MLE dir. 
\end{tabular}
& \begin{tabular}{@{}c@{}}MLE
\end{tabular}
  \\
			\hline
			\hline
{\it 9\/} & M & long transient period & $1$ & $0$\\ \hline
{\it 11\/} & M & medium transient period & $1$ & $0$\\ \hline
{\it 14\/} & C & gliders erode defects& --- &  ---\\ \hline
{\it 35\/} & M & medium transient period & $1/2$ & $0$\\ \hline
{\it 37\/} & M & medium transient period & $0$ & $0.35$\\ \hline
{\it 43\/} & C & gliders erode defects  & --- & ---\\ \hline
{\it 56\/} & M & medium transient period & $1$ & $0$\\ \hline
{\it 58\/} & M & long transient period & $-1$ & $0$\\ \hline
{\it 74\/} & M & medium transient period & $-1$ & $0$\\ \hline
{\it 142\/} & C & gliders erode defects   & --- & ---\\ \hline
{\it 184\/} &M  & defects percolate when gliders collide& $0$ & $0$ \\ \hline
\end{tabular}
\end{table}

All these 
dynamics feature a relatively simple invariant state, an {\it ether\/},
which supports a variety of annihilating gliders. A detailed 
quantitative analysis of the glider dynamics necessary for 
the proof may be possible in some cases (for some results in 
this direction, see \cite{BF} for {\it Rule 184\/}, 
and density computations of the three collapsing rules in Section~\ref{ECA-subsection-dendep}), but is
beyond the scope of this paper. However, we observe that 
the glider configuration for seven of these CA appears to 
stabilize at an exponential rate (hence the reference 
to the ``transient period''), while {\it Rule 184\/} 
and the three collapsing rules feature recurrent glider collisions 
that drive their density to zero much more slowly, at the rate $t^{-1/2}$ (by the 
argument in \cite{DS} for a similar dynamics). 

\subsection{Dependence of defect accumulation on initial density for elementary CA}\label{ECA-subsection-dendep}

We now turn our attention to how the defect accumulation depends on the density 
of $1$s in the initial CA configuration. We will assume that $\ca_0$ is the product measure with 
constant $p=\P(\ca_0(x)=1)\in (0,1)$, and mostly study how MLE varies with $p$.
Our next result greatly reduces the rules we need to 
consider.

\begin{theorem}\label{M-density-ind}
All rules in Table~\ref{ECA-marginal} are marginal for all $p\in(0,1)$, and their 
Lyapunov profile $L_\infty$ does not depend on $p$.
\end{theorem}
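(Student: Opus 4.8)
The plan is to separate the two assertions---marginality for every $p\in(0,1)$ and $p$-independence of $L_\infty$---and to observe that every ingredient used to classify these rules as marginal is insensitive to $p$, together with one structural refinement of Theorem~\ref{wc-ub}. I would first dispose of the ``not expansive'' half. For the rules proved marginal by a direct application of Theorem~\ref{wc-ub}, the hypothesis is a finite deterministic check on the string $B$, the time $t_B$ and the shift $v_B$ that makes no reference to the initial measure, while the conclusion is stated for an \emph{arbitrary} translation invariant product measure with $\P(\ca_0(x)=1)\in(0,1)$. Hence for all such $p$ we get $L_\infty\equiv-\infty$ off the single direction $\alpha^\ast:=v_B/t_B$. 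For the rules treated through Propositions~\ref{equivalent-rules}, \ref{rule27-marginal} and~\ref{rule152-marginal}, the deterministic ``non-invasion'' statements are again $p$-free, and the only probabilistic inputs have the form ``a prescribed finite pattern occurs infinitely often,'' which holds almost surely for every $p\in(0,1)$ by the same second moment (or Borel--Cantelli) argument used there, since the relevant patterns have positive probability for any such $p$. (For Rules~\textit{24},~\textit{33},~\textit{46} one first passes, after a bounded transient, to Rules~\textit{2},~\textit{1},~\textit{42}, which are themselves in the table.)

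The core of Theorem~\ref{wc-ub} is worth isolating, as it drives the second assertion. Its hypothesis says precisely that a block $B'$ of $t_B$ consecutive copies of $B$, when surrounded by defects, persists and is translated by $v_B$ every $t_B$ steps; such a block is thus an impenetrable wall for the defect percolation. Since a copy of $B'$ occurs almost surely both to the left and to the right of the finite initial defect set for any $p\in(0,1)$, the defects are trapped for all time in a band of fixed (a.s.\ finite) width whose two deterministic walls travel at the common speed $\alpha^\ast$. Passing to the frame that moves by the integer $v_B$ every $t_B$ steps, the content of this band evolves as a deterministic cellular automaton on finitely many cells with prescribed boundary data, and is therefore eventually periodic in time; over one period the defect counts inside the band are propagated by a fixed product of nonnegative transfer matrices, so the exponential growth rate of $\De_t$ equals the logarithm of the Perron rate of that product.

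This reduces $L_\infty(\alpha^\ast)$ to a manifestly $p$-free optimization. Define $r^\ast$ to be the supremum, over all finite space-time windows that are \emph{feasible} (appear inside a genuine orbit of the rule) and recurrent in the direction $\alpha^\ast$, of the per-step Perron rate of the associated defect transfer matrices. The bound $L_\infty(\alpha^\ast)\le r^\ast$ is exactly the finite-state periodicity above: any realization confines defects to a band whose eventual periodic background is feasible and recurrent, so its rate is at most $r^\ast$. For the matching lower bound, feasibility is a purely combinatorial property of the rule, so any near-optimal feasible window occurs almost surely somewhere in the trajectory for \emph{every} $p$; planting $\De_0$ at such an occurrence and invoking the monotonicity $L_{A_1}\le L_{A_2}$ together with $L_\infty=\lim_n L_{[-n,n]}$ gives $L_\infty(\alpha^\ast)\ge r^\ast$. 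Since $r^\ast$ depends only on the rule, $L_\infty$---which is $-\infty$ off $\alpha^\ast$ and equals $r^\ast$ there---does not depend on $p$. Finally, at $p=1/2$ the table records a nonnegative height, so $r^\ast\ge0$; being $p$-independent, the height stays $\ge0$ for all $p$, which shows each rule is genuinely marginal (neither collapsing nor expansive) throughout $(0,1)$.

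I expect the main obstacle to be the rigorous bookkeeping in the upper bound: making precise the notion of a feasible, recurrent window and its transfer matrices, and controlling the band width---a.s.\ finite but unbounded over the probability space---so that eventual periodicity in the co-moving frame really caps the growth by the finite quantity $r^\ast$ uniformly. The rules routed through the auxiliary propositions also require the wall/confinement picture to be re-derived in their post-transient form, but this is mechanical once the barrier interpretation of Theorem~\ref{wc-ub} is in place.
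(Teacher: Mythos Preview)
Your strategy is the paper's own, considerably amplified: the paper's proof is a two-sentence sketch noting that the second-moment argument for \textit{Rule 152} adapts to any $p$, and that ``any finite configuration occurs infinitely often'' in every nontrivial product measure. You unpack the second clause through the wall/confinement picture of Theorem~\ref{wc-ub}, eventual periodicity of the co-moving band, and a variational target $r^\ast$.

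There is one genuine gap, and it is not merely bookkeeping. You assert that in the co-moving frame the band ``evolves as a deterministic cellular automaton on finitely many cells with prescribed boundary data,'' but the hypothesis of Theorem~\ref{wc-ub} only guarantees that the wall reproduces $B$ and is defect-free at times $kt_B$; it does \emph{not} say the wall shields the band's CA configuration from $\ca_0$ outside. A light-cone check shows that the band at time $t_B$ depends only on band${}+{}$walls at time $0$ precisely when $|v_B|\le(b-1)t_B$. This fails for $b=1$ with $v_B\ne 0$, namely rules \textit{2, 10, 15, 34, 42, 130, 138, 162, 170} (and, via Proposition~\ref{equivalent-rules}, \textit{24} and \textit{46}): there the band genuinely reads the outside at every step, so it need not become periodic, your transfer-matrix rate is undefined, and both the upper and lower comparison with $r^\ast$ break down. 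Fortunately every one of these exceptional rules has MLE $0$---they are shifts or near-shifts with bounded defect counts for \emph{every} $\eta$---so $L_\infty(\alpha^\ast)\equiv 0$ and $p$-independence is immediate there. You should split off that case explicitly; for all remaining table entries the inequality $|v_B|\le(b-1)t_B$ holds, the walls do shield the band, and your eventual-periodicity argument goes through and matches what the paper leaves implicit.
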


\begin{proof}
The proof of the equivalence property  
for {\it Rule 152\/} in Proposition~\ref{rule152-marginal} is easily adapted.
The remainder follows from the fact than any finite configuration occurs infinitely 
often in any nontrivial uniform product measure.  
\end{proof}

With one exception, we expect that Theorem~\ref{M-density-ind} holds 
also for the marginal rules in Table~\ref{ECA-mystery}. The special case is {\it Rule 184\/}, 
which does not have a transient defect dynamics when $p=1/2$ \cite{BF}, but the transience does 
hold for other $p$. Furthermore, the defect dynamics is marginal for all $p$,
and the MLE does not depend on $p$, but its direction does: it is $1$ for $p<1/2$, 
$0$ for $p=1/2$ and $-1$ for $p>1/2$. 

The three collapsing rules in Table~\ref{ECA-mystery} are at first quite mysterious 
and computer simulations do not offer conclusive evidence even 
on the classification of the defect dynamics near $p=1/2$. Therefore, we need to 
to take a closer look at gliders for these three CA.  As the analysis for {\it Rule 142\/}
is almost exactly the same as for {\it Rule 14\/}, we will only discuss the latter  
and {\it Rule 43\/} in detail. For both of these, 
the ether is the configuration $(0011)^\infty$, which gets 
translated to the left by $1$ every time step. There are two kinds of gliders, leftward-
and rightward-moving ones, at sites with local configurations as 
given in Table~\ref{glider-collapse} (with a glider site and direction indicated by the arrow). 
As we see from this table, one or the other type of gliders ``wins'' when $p\ne 1/2$. 
However, for the advantage to be detectable empirically, the array size would have
to be on the order of at least $1/(2p-1)^4$, 
too impractical when $p=0.51$, say. 
From simulations we conclude that glider imbalance leads to marginal 
dynamics with the MLE equal to $0$ in both cases and the 
MLE direction either $-1$ (for {\it Rule 14\/}) or $1$ (for {\it Rule 43\/}). 
When $p=1/2$, the glider dynamics has the same behavior as in {\it Rule 184\/}
(at the same $p$), 
but by contrast the defects are not able to percolate through all collisions, 
which causes the collapse in the case of a uniform product initialization. These three rules 
thus do exhibit dramatic variation with $p$, albeit of 
a rather degenerate kind, as $\lambda_\infty=0$ except at a single exceptional density
$p=1/2$ where $\lambda_\infty=-\infty$ .

\begin{table}
\begin{quote}
\caption{Gliders in rules {\it 14\/} and {\it 43\/} and information about their 
initial probabilities.}\label{glider-collapse}
\end{quote}
\vspace{-0.5cm}
\centering
\begin{tabular}{| c | c | c|c|c|}
			\hline
 \begin{tabular}{@{}c@{}}Rule\end{tabular}
& \begin{tabular}{@{}c@{}}leftward glider sites
\end{tabular}
& \begin{tabular}{@{}c@{}} rightward glider sites
\end{tabular}
& \begin{tabular}{@{}c@{}}$\P(\leftarrow)-\P(\rightarrow)$
\end{tabular}
& \begin{tabular}{@{}c@{}}$\P(\leftarrow)$ when $p=1/2$
\end{tabular}
  \\
			\hline
			\hline
\begin{tabular}{@{}c@{}}\vspace{0.3cm}{\it 14\/} 
\end{tabular}
& $0\!\overleftarrow0\!0$, $111\!\overleftarrow1$, $111\!\overleftarrow0\!0$ & 
$0\!\overrightarrow1\!0$, $01\!\overrightarrow0\!1$, $011\!\overrightarrow0\!1$ 
& $(2p-1)^2$ &  $7/32$ \\ \hline
\begin{tabular}{@{}c@{}}\vspace{0.3cm}{\it 43\/} 
\end{tabular}
& $0\!\overleftarrow1\!0$, $1\!\overleftarrow0\!1$ & $0\!\overrightarrow0\!0$, $1\!\overrightarrow1\!1$ & $-(2p-1)^2$ & $1/4$\\ \hline
\end{tabular}
\end{table}

 It remains to address the rules in
Table~\ref{ECA-expansive}.  The $14$
rules that are not stripes CA 
are attracted to the same invariant state independent of $p$; that state is chaotic 
except for {\it Rule 110\/} that possibly slowly converges \cite{LN} 
to the periodic state with the MLE 
around $0.65$
discussed in Section~\ref{periodic-profiles-rule110}. As a result, the Lyapunov profiles, 
and therefore the MLE, 
for these $14$ rules exhibit no significant variation with $p$. 
Next, we present evidence that the 
nine stripes rules, while they remain expansive, 
do have detectable dependence 
of the MLE $\lambda_\infty$ on $p$.
 
\begin{table}[ht!]
\caption{Dependence of the MLE on $p\in(0,1)$ for expansive stripes CA.} 
\vspace{0.0cm}
\label{ECA-stripes}
\centering
\begin{tabular}{| c | c | c|}
			\hline
 \begin{tabular}{@{}c@{}}Rule\end{tabular}
& \begin{tabular}{@{}c@{}}min.~MLE 
\end{tabular}
& \begin{tabular}{@{}c@{}}max.~MLE
\end{tabular}
  \\
			\hline
			\hline
{\it 6\/} & $0.54$ at $p=0.4$ & $0.69$ at $p=1-$\\ \hline
{\it 25\/} & $0.35$ at $p=0+, 1-$ & $0.52$ for $p\in(0.4.0.6)$ \\ \hline
{\it 26\/} & $0.41$ at $0.37$ & $0.59$ at $p=0+, 1-$\\ \hline
{\it 38\/} & $0.54$ at $0.5$ &$0.69$ at $p=0+,1-$ \\ \hline
{\it 41\/} & $0.86$ for $p\in (0.15,0.85)$ &$0.89$ at $p=0+,1-$\\ \hline
{\it 57\/} & $0.693$ for $p=0.5$ & $0.706$ at $p=0.25, 0.75$\\ \hline
{\it 62\/} & $0.44$ for $p\in (0.08,0.92)$ & $0.47$ at $p=0+,1-$\\ \hline
{\it 134\/} & $0.45$ at $p=1-$  & $0.68$ at $p=0+$ \\ \hline
{\it 154\/} & $0.43$ at $p=0.22$& $0.69$ at $p=1-$\\ \hline
\end{tabular}

\end{table}
 
The nature of this dependence differs significantly among the nine expansive stripes rules
and is summarized in Table~\ref{ECA-stripes}. Most approximations are based on computations up to 
time $t=2\cdot 10^4$ for $99$ equally spaced densities in $(0,1)$. We use $t=10^5$ 
for the more subtle rules {\it 57\/} and  {\it 62\/}, which are discussed in 
greater detail below. Except for these two rules, we observe 
a greater MLE variability than reported in 
\cite{BRR}, which restricts the range of $p$, and, as reviewed in the Introduction, 
has a related but different definition of MLE $\lambda_\infty$. However, in some cases 
$\lambda_\infty$ is indistinguishable from a constant on an interval, as
indicated in Table~\ref{ECA-stripes}. 
We illustrate the density dependence by giving more details for {\it Rule 134\/} (see Fig.~\ref{134-dendep}): this rule generates the profile that spreads out  with increasing $p$, as its peak decreases and its support widens.

\begin{figure}[!ht]
\begin{center}
\includegraphics[trim=0cm 0cm 0cm 0cm, clip, width=6cm]{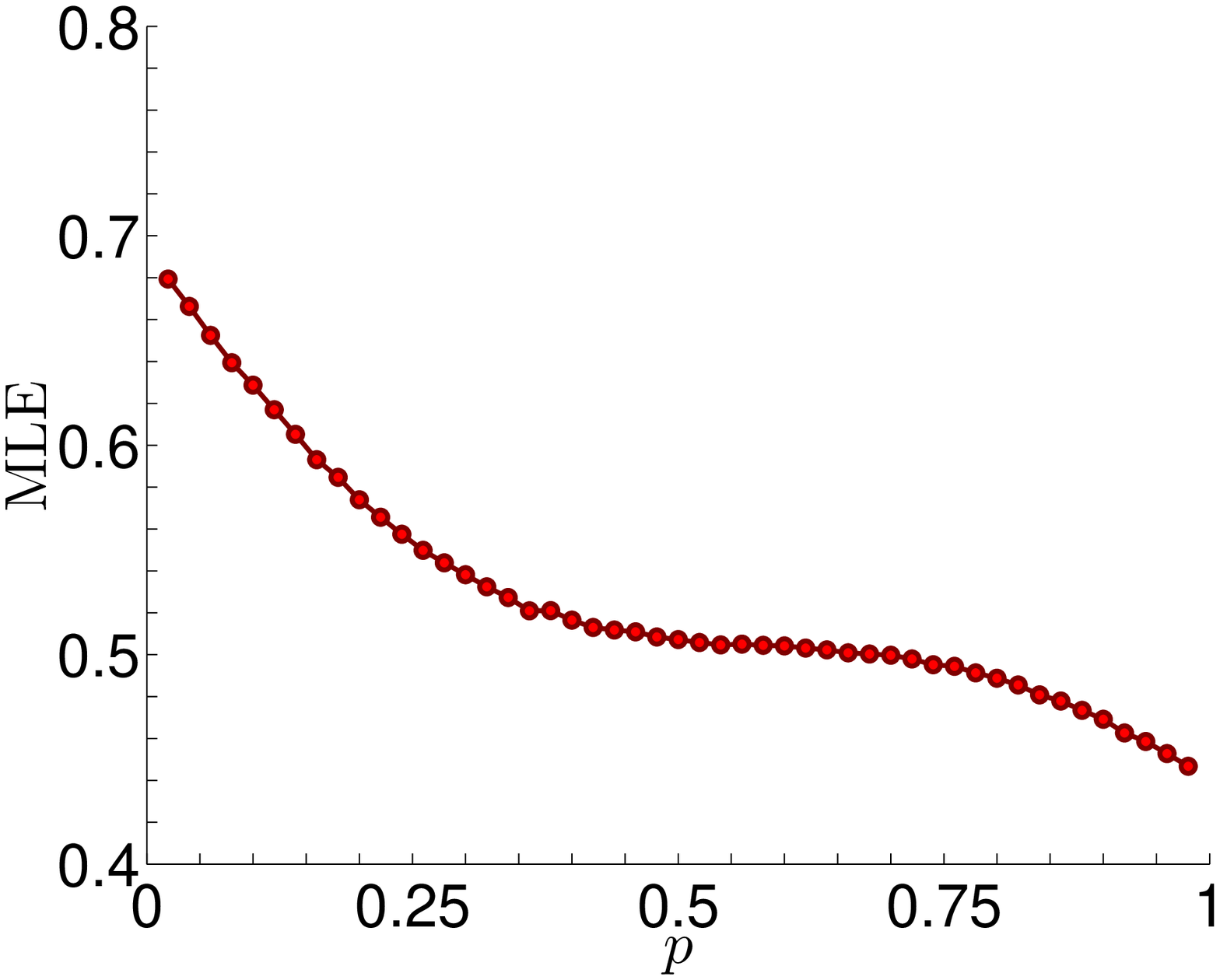}
\hspace{0.2cm}
\includegraphics[trim=0cm 0cm 0cm 0cm, clip, width=6cm]{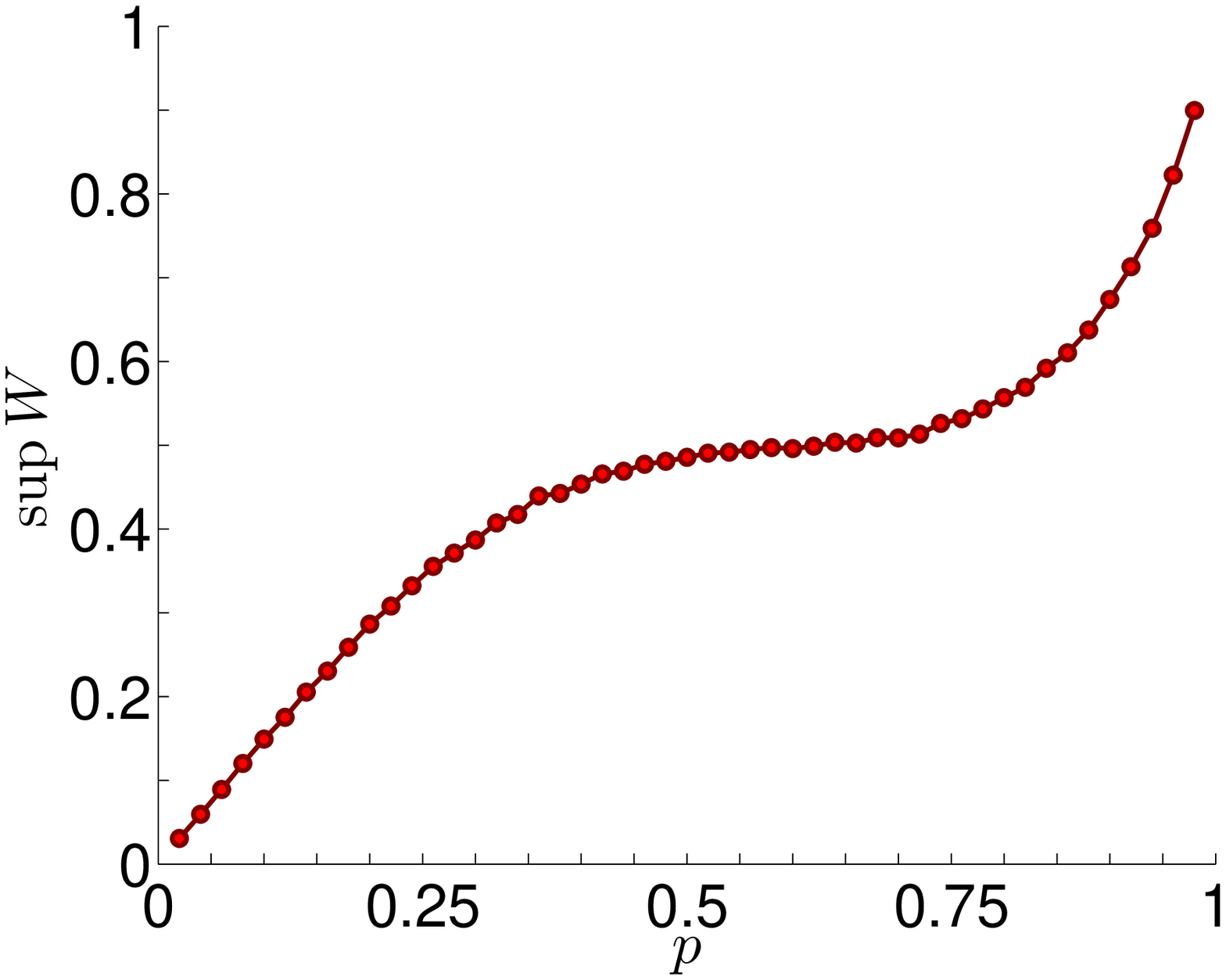}\\
\end{center}
\caption{Dependence on density $p$ for {\it Rule 134\/}: the
MLE (left) and the right edge of $W$ are graphed vs.~$p$. (The left edge of $W$ 
stays at $-1$.)} \label{134-dendep}
\end{figure}

We conclude this section with an empirical analysis of rules {\it 57\/} and {\it 62\/}.  
Like for the other seven stripes rules, 
it is (empirically) clear that for these two $W_\damage$ is (a.s.) at most a singleton for all $p$. 
Unlike the others, however, they at first appear to exhibit no density dependence of MLE on $p$. 
This necessitates a closer inspection, and we begin 
with {\it Rule 62\/}. 

As is common for stripes CA, {\it Rule 62\/} dynamics undergoes a transient phase until (in this case 
vertical) stripes dominate. This phase is quite long-lasting, 
and is characterized by the annihilation of diagonal 
gliders, which are temporarily able to block the expansion of defects.
See Fig.~\ref{62-profile-figure} for a sample evolution and 
the resulting Lyapunov profile. 

\begin{figure}[!ht]
\begin{center}
\includegraphics[trim=0cm 0cm 0cm 0cm, clip, width=9cm]{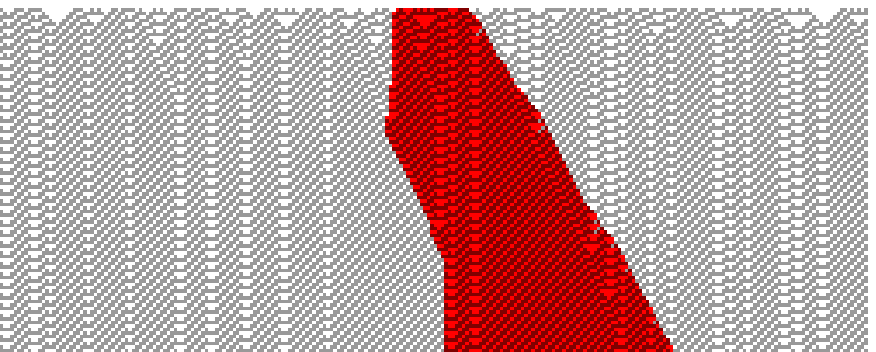}
\hspace{0.2cm}
\includegraphics[trim=0cm 0.2cm 0cm 0cm, clip, width=6cm]{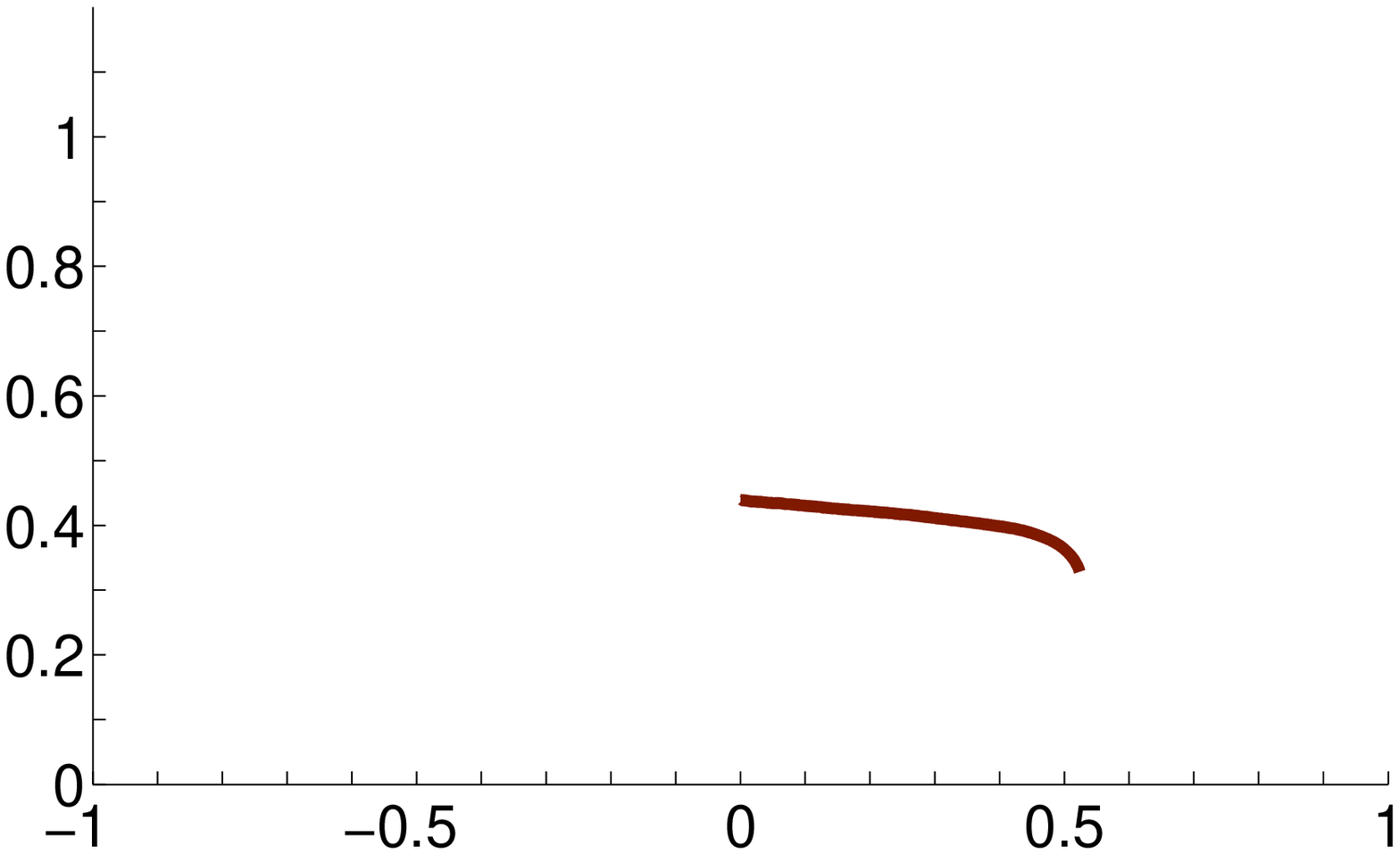}\\
\end{center}
\caption{Evolution of defect percolation CA up to time $100$ and the empirical Lyapunov profile
at time $10^5$ 
for {\it Rule 62\/}.} \label{62-profile-figure}
\end{figure}

It turns out that the only detectable variation of the MLE and its direction
occurs near $p=0$ and $p=1$.  
In fact, there seems to be an intriguing phase transition near $p=0.08$ that 
is marked by the sharp turn of MLE curve and the sudden passage of the MLE direction to $0$. 
See Fig.~\ref{62-dendep}.

\begin{figure}[!ht]
\begin{center}
\includegraphics[trim=0cm 0cm 0cm 0cm, clip, width=6cm]{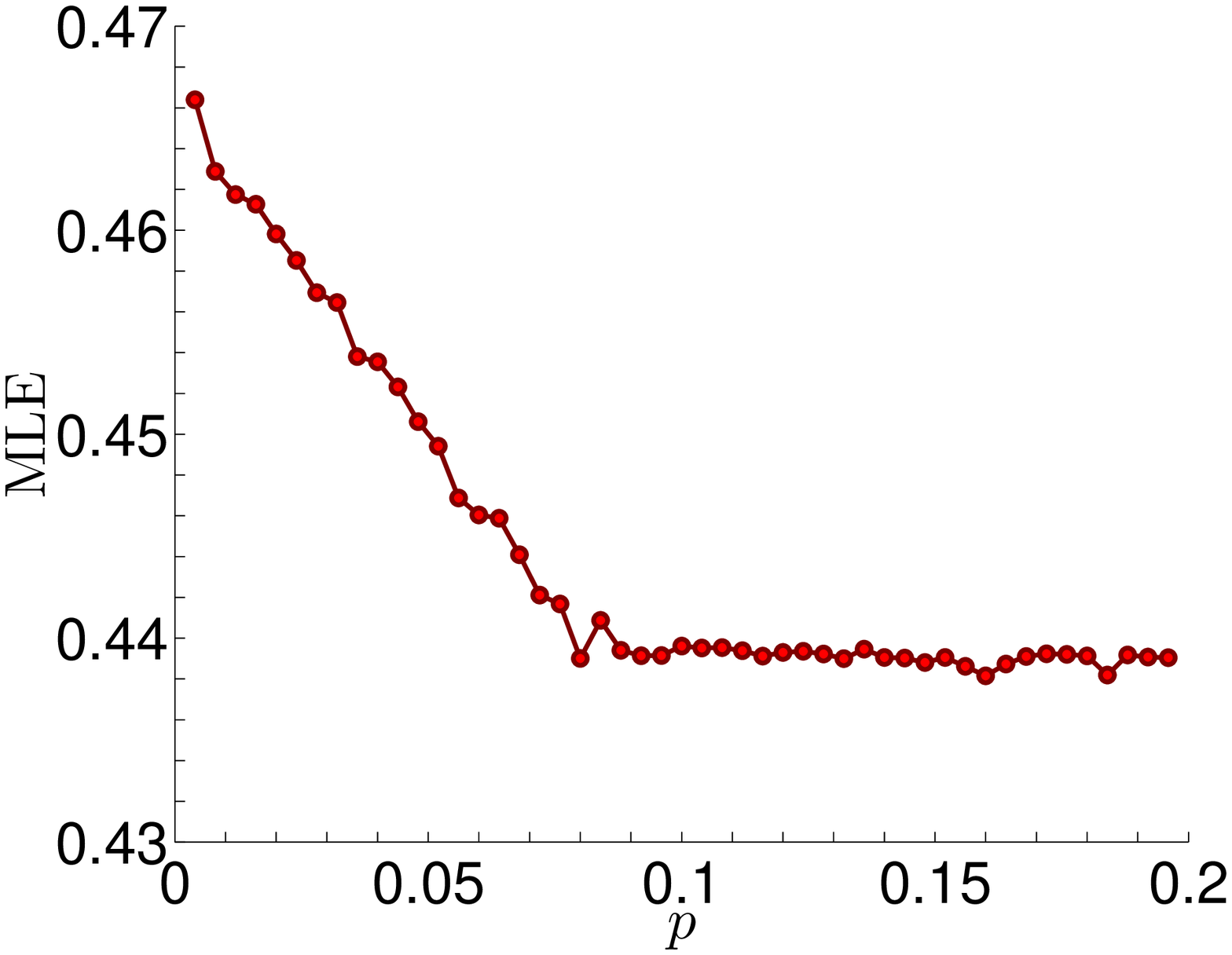}
\hspace{0.2cm}
\includegraphics[trim=0cm 0cm 0cm 0cm, clip, width=6cm]{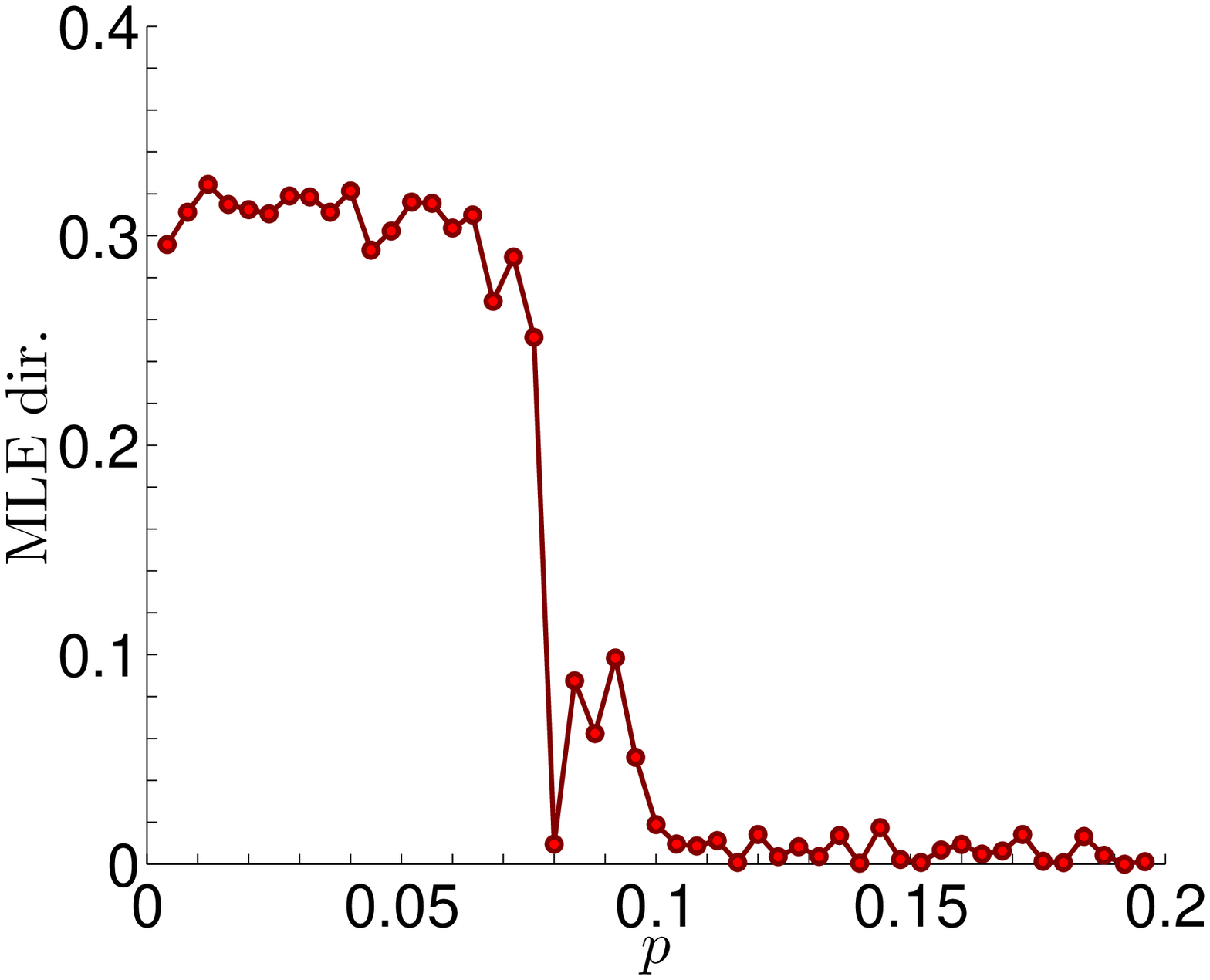}\\
\end{center}
\caption{Dependence on density $p$ near $p=0$ for {\it Rule 62\/}: the
MLE (left) and its direction are graphed vs.~$p$. These graphs are based on comptations up to $t=10^5$.} \label{62-dendep}
\end{figure}

Finally, {\it Rule 57\/} is another case with  
pairwise annihilating gliders, which are rightward-moving
pairs of $0$s and leftward-moving
pairs of $1$s on a checkerboard ether. This rule is invariant under a symmetry transformation:
if one switches the roles of two states, and then 
applies the left-right reflection, one obtains the same rule. As a consequence, 
temporarily using the superscript to indicate the dependence on $p$, 
$L^p_\infty(\alpha)=L^{1-p}_\infty(1-\alpha)$ and $\lambda^p_\infty=\lambda^{1-p}_\infty$. It is 
therefore enough to consider $p\in (0,1/2)$. On this interval, {\it Rule 57\/}
is a stripes rule, with the rightward gliders dominating. 
At $p=1/2$, this rule cannot be striped, as
$\xi_t$ equals its 
reflection in distribution and thus neither 
of the two gliders can win. See Fig.~\ref{57-dendep} for the 
empirical results.

\begin{figure}[!ht]
\begin{center}
\includegraphics[trim=0cm 0cm 0cm 0cm, clip, width=6cm]{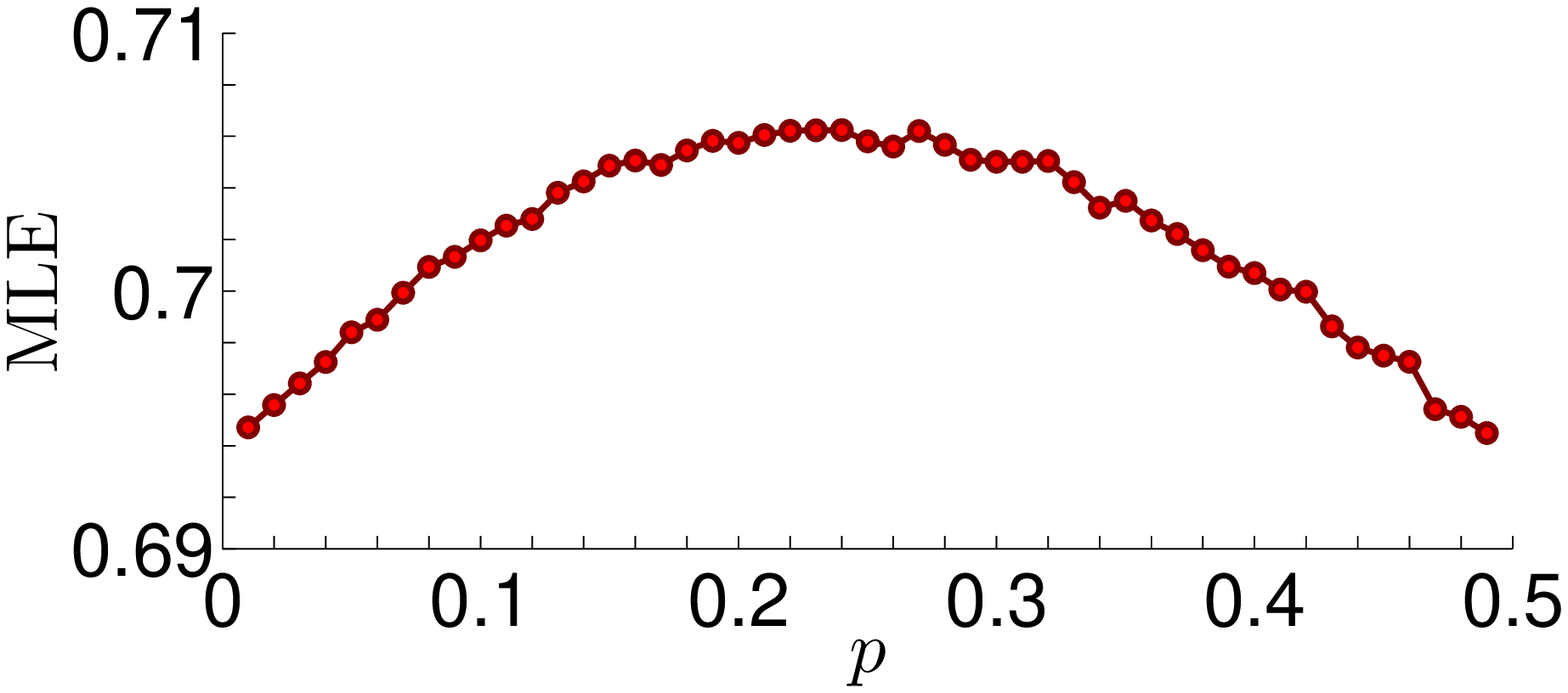}
\hspace{0.2cm}
\includegraphics[trim=0cm 0cm 0cm 0cm, clip, width=6cm]{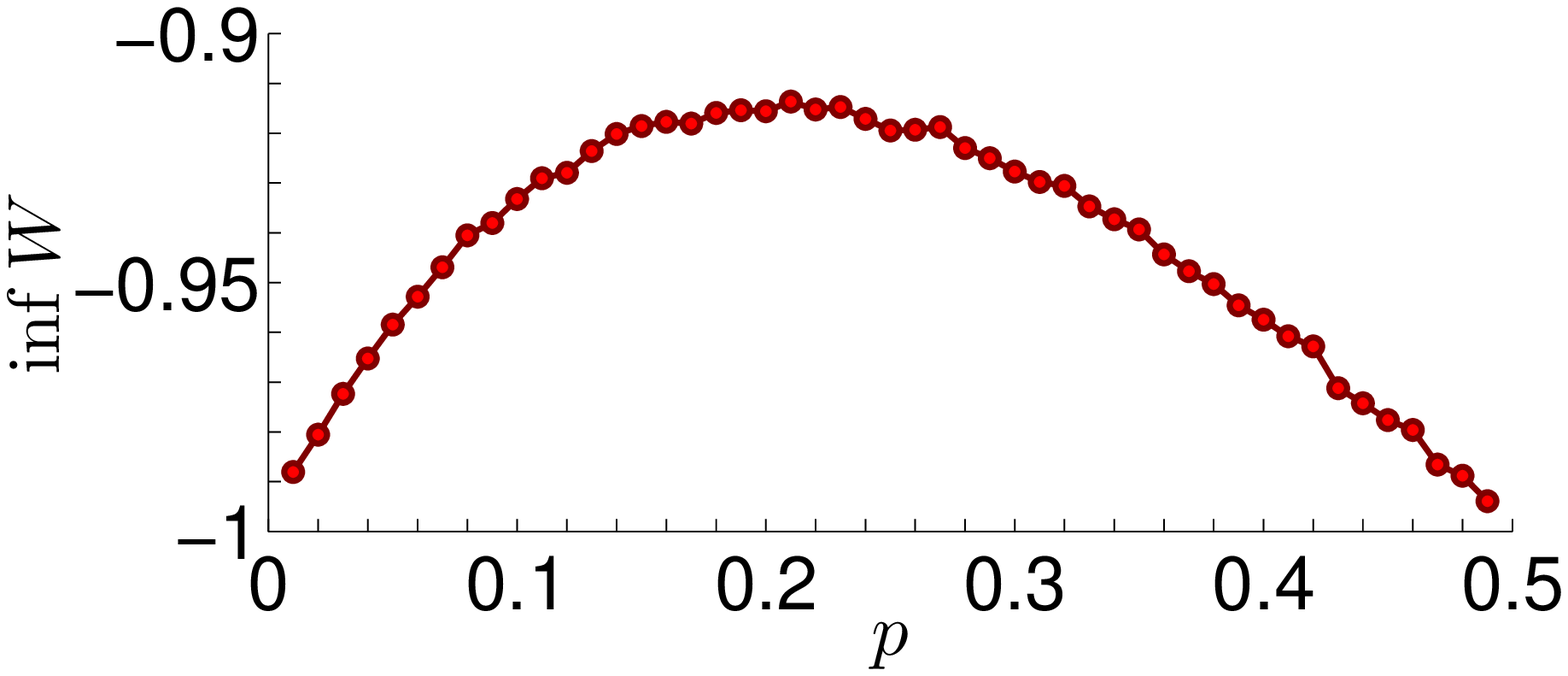}\\
\end{center}
\caption{Dependence on density $p\in (0,1/2)$ for {\it Rule 57\/}:
the MLE (left) and the left edge of $W$ are graphed vs.~$p$ (the right edge stays at $1$).
These are computed at $t=10^5$.} \label{57-dendep}
\end{figure}
  
\section{Two-dimensional cellular automata} \label{2d-section}

While the theoretical set-up is similar, a two-dimensional geometry is
much less restrictive than a one-dimensional one, making rigorous theory more 
demanding and in need of further development. We restrict our attention to
totalistic rules with a von Neumann 
or Moore neighborhood. The one simple rigorous result we provide next
identifies $8$ of the $2^6=64$ of the former rules, 
and $32$ of the  $2^{10}=1024$ of the latter rules, 
as collapsing. The nomenclature we use is similar to the one in \cite{Vic1}:
the rule is identified by the neighborhood, and the name {\it Tot\/} followed by the list 
of {\it occupation numbers\/}, that is, the neighborhood counts that update to $1$. 
For example, Moore neighborhood {\it Tot 1\/} updates $x$ to $1$ precisely when there is
a single $1$ among the $9$ neighbors of $x$. 

\begin{prop} \label{boot-collapse}
Assume that $\ca_0$ is a product measure with density $p\in (0,1)$. 
For Moore neighborhood, any totalistic rule for which $4,\ldots 9$ are all among the occupation
numbers is collapsing. The same holds for any von Neumann rule 
whose occupation numbers include all of $2,\ldots, 5$. Consequently,  Moore (resp.~von Neumann)
rules that have none of $0,\ldots 5$ (resp.~none of $0,\ldots, 3$) among occupation numbers   
are also collapsing.
\end{prop}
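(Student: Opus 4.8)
The plan is to isolate one robust structural fact and then combine it with the $0\leftrightarrow1$ state symmetry, deferring the genuinely probabilistic work to the end. The robust fact is an \emph{insensitivity} statement: if every cell of the Moore neighborhood $\cN_x$ carries a $1$, then the count at $x$ equals $9$, and since both $8$ and $9$ lie in the occupation set, flipping any single neighbor leaves the update at $x$ unchanged; hence $\change_t(y,x)=0$ for all $y\in\cN_x$, and no defect is produced at $x$. The same holds in the von Neumann case, where a saturated neighborhood has count $5$ and both $4$ and $5$ are occupation numbers. Iterating, a saturated block is a \emph{defect sink}: if $\ca_t\equiv1$ on $[a-1,b+1]^2$ then $\ca_{t+1}\equiv1$ and $\De_{t+1}\equiv0$ on $[a,b]^2$, so a saturated region of side $s$ leaves a defect-free saturated core that recedes at unit speed for $\lfloor s/2\rfloor$ steps. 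I would first record this lemma together with its von Neumann analogue, and observe that a saturated strip of width $\ge 3$ is impenetrable to the percolating defects.

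I would next dispose of the final (corollary) assertion by symmetry, so that only the two lower-threshold statements remain. Relabeling the two states conjugates the global map by $\eta\mapsto 1-\eta$ and leaves $\change_t$, hence the whole defect-accumulation dynamics and $L_\infty$, unchanged; on occupation sets $O$ it acts by $O\mapsto O'=\{m-k:k\notin O\}$, with $m=9$ (Moore) or $m=5$ (von Neumann). A direct check shows that $\{4,\dots,9\}\subseteq O$ becomes $O'\cap\{0,\dots,5\}=\emptyset$, and $\{2,\dots,5\}\subseteq O$ becomes $O'\cap\{0,\dots,3\}=\emptyset$. Thus the ``none of'' statements are precisely the images of the first two under this involution, and collapse is preserved.

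It then remains to prove collapse in the two primary cases, i.e.\ that any finite $\de_0$ yields $\de_t=\emptyset$ eventually a.s., whence $L_\infty\equiv-\infty$. The mechanism I would use mirrors the worst-case barrier argument of Theorem~\ref{wc-ub}: the defect sink shows that once the defect set is enclosed by saturated walls it is trapped, and any saturated box into which the (speed-one) defect set is swept annihilates the defects it contains. Supplying such saturated regions is where the product-measure hypothesis enters: for every $p\in(0,1)$, blocks of $1$s of every size occur with positive density, and I would use ergodicity together with second-moment estimates (as in the proof of Proposition~\ref{rule152-marginal}) to guarantee that the finite defect cloud is a.s.\ eventually absorbed, giving $W=\emptyset$ by Proposition~\ref{L-W}.

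The hard part will be exactly this last step --- upgrading \emph{local} insensitivity to \emph{global} extinction, uniformly in $p$. Confinement by saturated barriers only forces $W\subseteq\{0\}$, which is consistent with a merely marginal rule, so to rule that out one must show the trapped defects actually vanish. The naive monotone comparison with threshold bootstrap percolation fails here, because the rule may switch sites off (isolated $1$s, or stable dominoes in the von Neumann case, need not grow), so saturated regions do not simply expand. The crux is therefore a probabilistic argument that, from a product measure of \emph{any} density, the trajectory a.s.\ presents saturated blocks engulfing the linearly spreading defect set before it can sustain itself, together with a separate accounting for rules whose occupation set also contains low counts, for which the complementary all-$0$ state is not itself a sink.
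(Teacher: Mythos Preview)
Your symmetry reduction for the corollary and the local defect-sink observation are both fine and match the paper. The gap is in the main probabilistic step, and it stems from a misdiagnosis: you dismiss the ``naive monotone comparison with threshold bootstrap percolation'' as failing, but this comparison is exactly what makes the proof go through. The point is that even though the full rule $\phi$ need not be monotone (it may switch isolated $1$s off), it \emph{dominates} the monotone threshold-$2$ rule $\phi_2$ pointwise, since $\phi(c)=1$ whenever $c\ge 2$. Because $\phi_2$ itself is monotone, a one-line induction gives $\ca_t\ge\ca_t'$ where $\ca'$ evolves under $\phi_2$ from the same initial state; hence any $L\times L$ box that fills up under $\phi_2$ (with worst-case all-$0$ boundary) also fills under $\phi$, regardless of what $\phi$ does at counts $0,1$. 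No ``separate accounting for rules whose occupation set also contains low counts'' is needed.

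The paper then invokes Schonmann's finite-size scaling result \cite{Sch} for the biased majority CA: for $L$ of order $\exp(cp^{-2})$, a random $L\times L$ box is ``good'' (fills with $1$s in time $L^2$ irrespective of the outside) with probability at least $0.9$. Tiling $\bZ^2$ by such boxes and using $0.9>p_c^{\text{site}}(\bZ^2)$, the good boxes percolate and confine the defect cloud to a finite island after time $L^2$; this island is then filled and, by your own sink lemma, all defects die. Your proposed alternative --- waiting for all-$1$ blocks already present in $\ca_0$ and running a one-dimensional second-moment argument \`a la Proposition~\ref{rule152-marginal} --- cannot close this in two dimensions at small $p$: initial saturated $L\times L$ blocks have density $p^{L^2}$, far too sparse to form an enclosing contour before the speed-one defect set escapes. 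Some version of the bootstrap amplification (good boxes fill from sparse seeds, then percolate) is unavoidable here; the comparison you thought fails is precisely the missing ingredient.
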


\begin{proof} 
Assume we have a von Neumann rule in which any site $x$ updates into state $1$ by contact 
with $2$ or more $1$s. The proof in the Moore case is similar, and the last 
two statements are proved by switching the roles of $0$s and $1$s.  
Call an $L\times L$ square {\it good\/} if 
the configuration within the square is such that no matter what the configuration 
outside the square is, the rule completely fills the square by $1$s in time $L^2$.
By the result in \cite{Sch},
for $L$ large enough (in fact, of size $\exp(cp^{-2})$, for some constant $c$), 
a fixed $L\times L$ square is good with probability at least $0.9$, 
Note also that once such a square is filled by $1$s and free of defects, no defect 
can ever enter it. 

Now tile $\bZ^2$ with $L\times L$ squares. As the critical site percolation probability 
on $\bZ^2$
is smaller than $0.9$, by time $L^2$ the good squares confine all defects into a 
finite set. Then that finite set is completely covered by $1$s in a finite (random) 
time and then the defects must all die as $11111\mapsto$ 1 is a stable update. 
\end{proof}

\begin{figure}[ht]
\begin{center}
\begin{subfigure}{.33\textwidth}
\centering
\includegraphics[trim=0cm 0cm 0cm 0cm, clip, width=4cm]{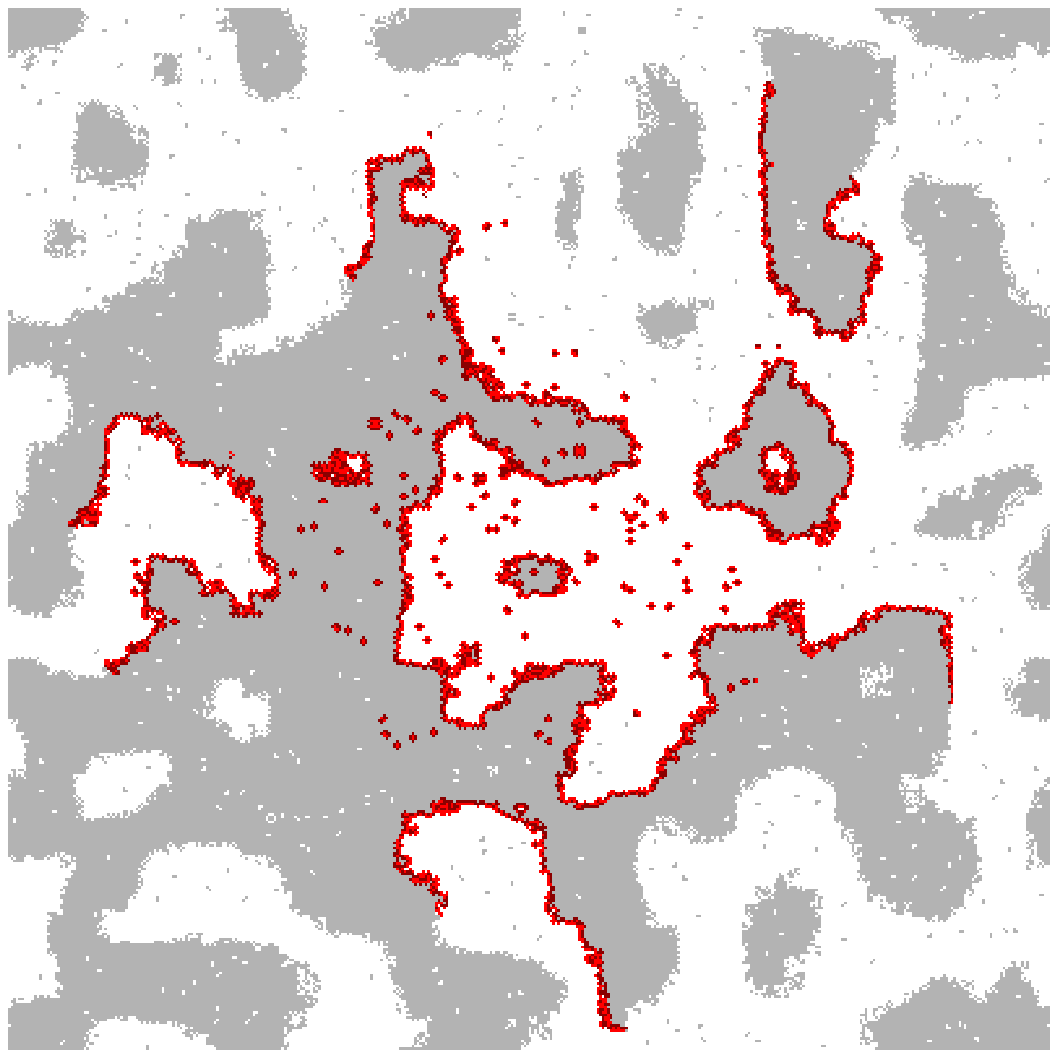} 
\caption{}
  \label{2d-figure:sub1}
\end{subfigure}
\hspace{-0.2cm}
\begin{subfigure}{.33\textwidth}
\centering
\includegraphics[trim=0cm 0cm 0cm 0cm, clip, width=4cm]{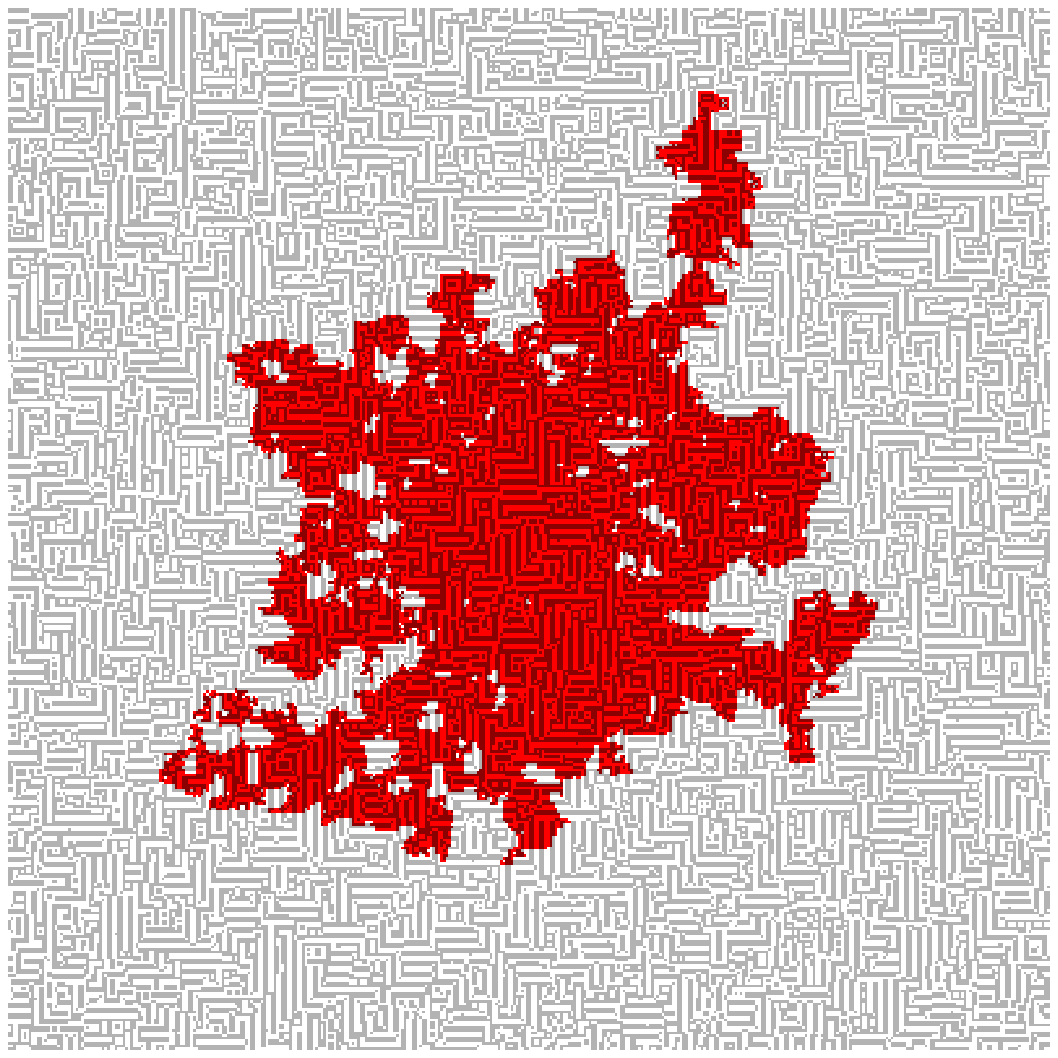}
\caption{}
  \label{2d-figure:sub2}
\end{subfigure}
\hspace{-0.2cm}
\begin{subfigure}{.33\textwidth}
\centering
\includegraphics[trim=0cm 0cm 0cm 0cm, clip, width=4cm]{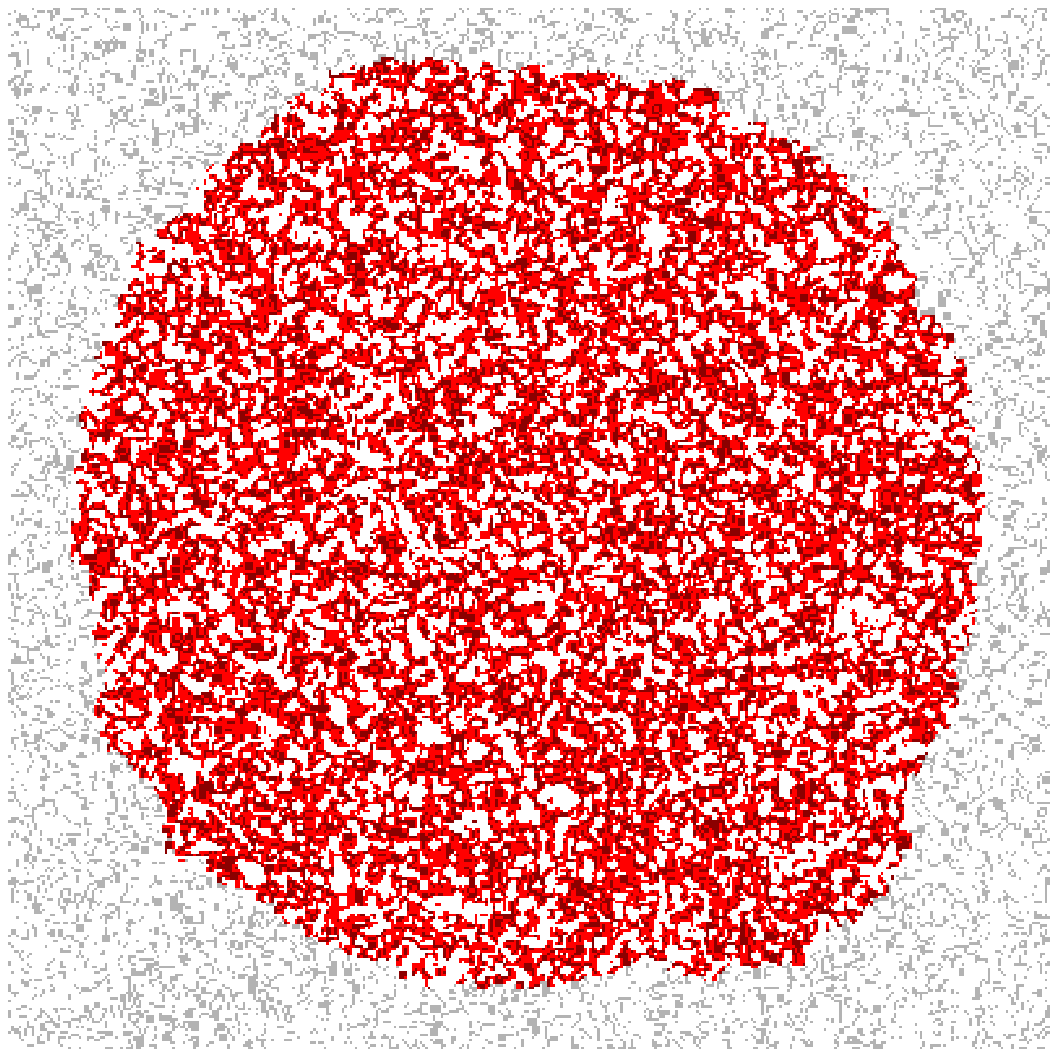}
\caption{}
  \label{2d-figure:sub3}
\end{subfigure}
\end{center}
\vspace{-0.5cm}
\caption{
Snapshots of two-dimensional evolution of defect percolation CA: (a) von Neumann {\it Tot 245\/},
(b) von  Neumann {\it Tot 125\/}, and (c) Moore {\it Tot 1\/}.
} \label{2d-figure}
\end{figure}

Needless to say, we have good empirical evidence that many more of these rules are 
collapsing. Possibly the most interesting cases are von Neumann 
{\it Tot 245\/} and Moore {\it Tot 46789\/} rules, both famously known as the 
{\it Vishniac twist\/} \cite{Vic1, TM}. In these rules, the defects can survive only 
on the border between $0$s and $1$s, and those borders anneal away, 
i.e., shrink and disappear due to a
process resembling surface tension. This is however a slow evolution during which the set of 
defect sites self-organizes into long ``noodles,'' as in Fig.~\ref{2d-figure:sub1}, which features 
the von Neumann case.

Among the notable apparently marginal rules, we mention the ``cauliflower'' von Neumann rule 
{\it Tot 125\/}, in which defect sites do spread while the state is close enough to the 
uniform product measure, but eventually the CA reaches a state that 
stops further defect growth; 
see Fig.~\ref{2d-figure:sub2}. 

Chaotic rules are very common among totalistic ones, thus expansive defects accumulation dynamics 
also abound. A typical example is Moore {\it Tot 1\/}, whose defect
percolation CA is illustrated in Fig.~\ref{2d-figure:sub3}, while its 
empirical Lyapunov profile (at time $t=300$) is depicted in Fig.~\ref{Tot1-figure}. 
We estimate its MLE to be about $1.24$. 
\vspace{-0.2cm}
\begin{figure}[ht]
\begin{center}
\includegraphics[trim=0cm 0cm 0cm 0.5cm, clip, width=7cm]{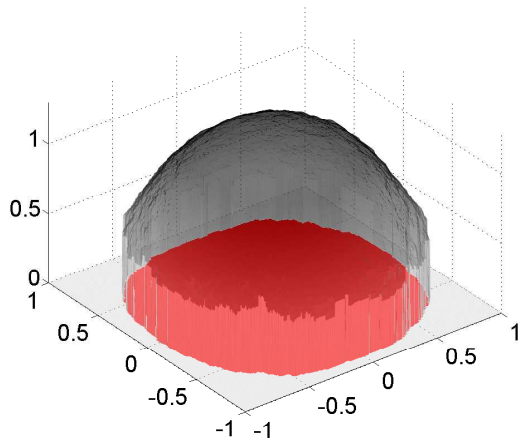}
\end{center}
\vspace{-0.5cm}
\caption{Approximation to Lyapunov profile $L$ for {\it Tot 1\/}. The ``chiseled'' 
boundary indicates drop to $-\infty$ and the red ``shadow'' approximates $\{L>-\infty\}=W$.} \label{Tot1-figure}
\end{figure}

\section{Periodic initial states}\label{section-periodic}

A configuration $\eta\in S^{\bZ^d}$ is {\it doubly periodic\/} for a 
CA with global map $\Phi$ if there exist
\begin{itemize}
\item a number $\pi\ge 1$ so that $\Phi^\pi(\eta)=\eta$; and
\item a number $\sigma\ge 1$ so that, for every $x$, $\eta(x)=\eta(x\mmod \sigma)$, where 
$x\mmod \sigma$ reduces every coordinate of $x$ modulo $\sigma$.
\end{itemize} 
We assume that $\pi$ and $\sigma$ are the smallest possible and refer to them respectively
as the {\it temporal\/} and {\it spatial period\/}.
When $d=1$, it is convenient to also introduce a {\it shift period\/} $\pi_0\ge 1$, the smallest
time at which there exists a {\it shift\/} $\sigma_0\in \{0,\ldots, \sigma-1\}$  such that 
the CA shifts $\eta$ to the right by $\sigma_0$ in $\pi_0$ steps:
$(\Phi^{\pi_0}\eta)(x)=\eta(x-\sigma_0)$ for all $x$. Note that $\pi_0$ divides $\pi$. 
In this section, we will assume that a doubly periodic configuration $\eta$ is the initial 
state $\xi_0$ for the CA dynamics. We often specify a periodic configuration $\eta$ by a {\it tile\/}, 
that is a configuration in $S^{[0,\sigma-1]^d}$ that gives $\eta$ on $[0,\sigma-1]^d$. 

One complication in the analysis of periodic orbits is caused by reducibility. 
To each $\eta$ we associate the {\it reduced kernel\/}
$$
K:\bZ_\sigma^d\times \bZ_\sigma^d \to \{0,1\}, 
$$
which has $K(a,b)=1$ exactly when the defect percolation dynamics starting 
from $\ind(a)$ results in $\de_\pi(x)=1$ for some $x=b\mmod \sigma$. We call 
$\eta$ {\it irreducible\/} if $K$ is irreducible. Clearly, we may check 
irreducibility at time $\pi_0$; more on this later. 

The doubly periodic configuration $\eta$ is {\it strongly irreducible\/} if 
there exists an $a\in \bZ^d$ such that, for every $x_0\in \bZ^d$, and $\de_0=\ind(x_0)$, 
$$
\cup_t\{x: \de(x+ta)=1\}=\bZ^d.
$$
If $\eta$ is irreducible but not strongly irreducible, then the set of points 
in $\{\de_t=1\}$ is included in a periodic space-time lattice.

\subsection{Defect shapes and density profiles}\label{periodic-shapes}

Without loss
of generality we assume in this section that $\eta$ is strongly irreducible and 
$a=0$. In our examples, we will commonly have  strong irreducibility 
if we neglect the sites with stable updates. We call such cases {\it essentially 
strongly irreducible\/}. We will assume that the initial set of defects $\de_0$ 
is a $\sigma\times \sigma$ square, to prevent their accidental death. In the 
essentially strongly irreducible cases, the density profile $\rho$ is constant 
on $W$ (and of course vanishes off $W$). 

For the next theorem, we let  $\cS^{d-1}\subset \bR^d$ be the set of unit 
vectors, that is, the set of directions in $d$ dimensions. The half space in direction 
$u$ is defined by 
$$
H_u^-=\{x\in \bR^d:\langle x, u\rangle\le 0\}.
$$

\begin{theorem}\label{2dshapes-thm}
For any unit vector $u\in \cS^{d-1}$,
there exists a number $w(u)\ge 0$ so
that, if $\de_0=H_u^-\cap \bZ^d$, 
$$
t^{-1}\de_t\to w(u)u+H_u^-
$$ as $t\to\infty$, in Hausdorff metric. 
Moreover, if we form the set
$$
K_{1/w}=\bigcup_{u\in \cS^{d-1}} \{\alpha u: 0\le \alpha\le 1/w(u)\},
$$
then the limiting shape is given by the polar transform of $K_{1/w}$, 
$$
W=K_{1/w}^*=\{x\in \bR^d:\langle x, u\rangle\le w(u)\}. 
$$
\end{theorem}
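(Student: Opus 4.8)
The plan is to first analyze the evolution from a half-space initial condition, extracting the speeds $w(u)$, and then to deduce the point-source shape $W$ by a polar-duality argument that compares the two initializations through monotonicity. Throughout I will use two structural features. First, the defect percolation dynamics is monotone (Section~\ref{intro}), so larger defect sets produce larger descendants. Second, because $\eta$ is doubly periodic with $a=0$, the one-step maps are periodic in time with period $\pi$ and covariant under the spatial lattice $\sig\bZ^d$; consequently the $\pi$-step evolution is a single fixed monotone map $\Psi$ that commutes with translations in $\sig\bZ^d$.

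For the first assertion, fix $u$ and write $H(c)=\{x:\langle x,u\rangle\le c\}$, so that $H(0)=H_u^-$. I would introduce the front function $g(c)=\sup\{c':\Psi(H(c))\supseteq H(c')\}$. Monotonicity of $\Psi$ makes $g$ nondecreasing, and the $\sig\bZ^d$-covariance gives the quasi-periodicity $g(c+\langle v,u\rangle)=g(c)+\langle v,u\rangle$ for every $v\in\sig\bZ^d$. Since $\sig\bZ^d$ spans $\bR^d$, the additive group $\{\langle v,u\rangle:v\in\sig\bZ^d\}$ is nontrivial, so $g$ descends to a monotone circle map whose iterates have a well-defined translation number; this produces the per-period advance and hence a limiting speed $w(u)\ge 0$. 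The strong irreducibility hypothesis (Section~\ref{section-periodic}) is what guarantees the set is ``solid'' a bounded distance behind the front, so that $\delta_t$ stays within bounded Hausdorff distance of a genuine half-space; together with the finite range of $\cN$ (which caps the advance from above) this upgrades the translation-number statement to $t^{-1}\delta_t\to\{x:\langle x,u\rangle\le w(u)\}=w(u)u+H_u^-$.

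For the shape formula I would prove the two inclusions separately, after recording that $K_{1/w}^*=\{x:\langle x,u\rangle\le w(u)\ \text{for all }u\}$, so that the claim is equivalent to saying the support function of $W$ equals $w$. The inclusion $W\subseteq K_{1/w}^*$ is the easy direction: a localized (square) source is contained in $H(c)$ for every $u$ and a suitable constant $c$, so by monotonicity its descendants are dominated by the half-space evolution, whose front in direction $u$ sits at $w(u)t+o(t)$; hence $\langle x,u\rangle\le w(u)t+o(t)$ for $x\in\delta_t$, giving $\langle\alpha,u\rangle\le w(u)$ for every $\alpha\in W$. For the reverse inclusion $W\supseteq\operatorname{int}K_{1/w}^*$ I would use strong irreducibility in the other direction: after a burn-in the square source produces, with full density and no macroscopic holes, a region large enough to contain a translate of a half-space patch $H(c)\cap(\text{large box})$ aimed in any prescribed direction $u$; monotonicity then propagates that patch at the half-space speed $w(u)$, so the front from the point source reaches level $w(u)t-o(t)$ in direction $u$. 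Combining this over a finite net of directions and using the solidity of $\delta_t$ on $W$ shows every $\alpha$ with $\langle\alpha,u\rangle<w(u)$ for all $u$ is reached.

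The main obstacle is the lower inclusion together with the phase-independence of the front speed. The half-space advance could a priori depend on the position of the front modulo the period of the environment; the translation-number argument is precisely what rules out a phase-dependent speed. More delicate is the reverse inclusion: one must show that the point-source cluster is not merely asymptotically half-space-dominated but actually fills its putative shape without macroscopic holes or thin protrusions, and this is exactly where essential strong irreducibility (which forces the density profile to be constant on $W$) does the work, by guaranteeing that a large solid patch is available to seed half-space growth in every direction.
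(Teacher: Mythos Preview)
Your approach diverges from the paper's, and the divergence exposes a genuine gap in the half-space part.

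You track a single scalar front via $g(c)=\sup\{c':\Psi(H(c))\supseteq H(c')\}$ and appeal to a rotation-number argument. The difficulty is that $\Psi(H(c)\cap\bZ^d)$ is \emph{not} a half-space intersected with $\bZ^d$: different residue classes $a\in\bZ_\sigma^d$ advance by different amounts in one period, so the front is ragged. Consequently $g^n(0)$ is only a lower envelope for the actual front of $\de_{n\pi}$, and iterating $g$ does not reproduce the dynamics. To close the argument you would need a matching upper envelope $g^+(c)=\inf\{c':\Psi(H(c))\subseteq H(c')\}$ and a proof that $g$ and $g^+$ share the same rotation number; you gesture at this with ``bounded Hausdorff distance from a half-space'' but give no mechanism for the bound.

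The paper's device sidesteps this entirely. It decomposes any $S\subset\bZ^d$ into the $\sigma^d$-tuple $(S_a)_{a\in\bZ_\sigma^d}$ with $S_a=S\cap(a+\sigma\bZ^d)$, and lifts the $\pi$-step rule to a map $\widetilde\Psi$ on tuples of subsets of $\bR^d$. The point is that $\widetilde\Psi$ \emph{preserves} tuples of parallel half-spaces: starting from $(\alpha_a^0 u+H_u^-)_a$ one obtains $(\alpha_a^1 u+H_u^-)_a$. So the front is a finite vector $(\alpha_a^t)_a$ that evolves deterministically; strong irreducibility plus discreteness forces this vector dynamics to be eventually periodic, yielding $\alpha_a^{t+k}-\alpha_a^t=kw(u)$ for all $a$ and large $t$. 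No rotation-number argument is needed, and the bounded-raggedness you assume becomes an immediate consequence (the spread of the $\alpha_a^t$ stabilizes).

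For the shape formula, your two-inclusion sketch is in the right spirit and close to the cited references \cite{Wil,GG1,GG3}. The paper adds one observation you omit: $\widetilde\Psi$ is set-additive (commutes with unions), so writing a half-space as a union of singletons immediately gives $K_{1/w}=W^*$ and hence the convexity of $K_{1/w}$. Your lower inclusion, by contrast, relies on the cluster already being ``solid'' to seed half-space patches, which is close to assuming what you want; set-additivity gives this direction without circularity.
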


We refer to $w(u)$ as a {\it half-space velocity\/} \cite{GG1,GG3, Wil}.
In mathematical models of crystallography, $K_{1/w}$ is sometimes called 
the {\it Frank diagram\/} \cite{Gig}. In our case, as can be seen from 
the proof, $K_{1/w}$ is a convex polygon. In $d=2$ its vertices 
can only be in the directions orthogonal to lines through 
two points of the Minkowski sum of $\pi$ copies of $\cN$, 
i.e., $\{x_1+\ldots +x_\pi: x_1,\ldots, x_\pi\in \cN\}$. 

\begin{proof}
For simplicity of notation, we assume $\pi=1$; the proof is easily adapted to 
general $\pi$. 

Interpret a subset of $S\subset\bZ^d$ as a
$\sigma^d$-tuple of subsets $(S_a: a\in \bZ_\sigma^d)$, where $S_a=S\cap(a+\sigma\bZ^d)$. Denote the 
set of these tuples by $\Sigma$. Using one of these tuples as the $\de_0$, 
$\de_\pi$ may be interpreted as a map $\Psi:\Sigma\to\Sigma$. Let $\widetilde \Sigma$ be 
the set of all $\sigma^d$-tuples of subsets of $\bR^d$.  We define 
the map $\widetilde\Psi:\widetilde \Sigma\to\widetilde \Sigma$ as follows. 
The image of $(\widetilde S_a: a\in \bZ_\sigma^d)$ is the vector of sets $(\widetilde T_b: b\in \bZ_\sigma^d)$
such that 
\begin{equation}\label{2dshapes-eq1}
\widetilde T_b=\{x\in \bR^d:0\in\Psi((\widetilde S_a-x)\cap (a+\sigma\bZ^d):a\in \bZ_\sigma^d)_b\}.
\end{equation}
In words, at each $x$, the occupation of the set at coordinate $b$ 
is decided by translating $\bZ^d$ so that the $b$th lattice covers $x$, 
intersecting all sets with this translation, and then applying the 
discrete rule. It immediately follows from (\ref{2dshapes-eq1}) that 
the discrete and continuous rules are conjugate: 
\begin{equation}\label{2dshapes-eq2}
\Psi(\widetilde S_a \cap (a+\sigma\bZ^d):a\in \bZ_\sigma^d)=
(\widetilde \Psi(\widetilde S_a :a\in \bZ_\sigma^d)_b\cap (b+\sigma\bZ^d): b\in \bZ_\sigma^d).
\end{equation}
The continuous rule is useful because of its translation invariance when applied 
to half-spaces. To formulate this property, 
fix a direction $u\in \cS^{d-1}$ and a vector $(\alpha_a^0: a\in \bZ_\sigma^d)$.
Then, there exists a vector $(\alpha_a^1: a\in \bZ_\sigma^d)$ so that 
\begin{equation}\label{2dshapes-eq3}
\widetilde \Psi(\alpha_a^0 u+H_u^-:a\in \bZ_\sigma^d)=(\alpha_a^1 u+H_u^-: a\in \bZ_\sigma^d).
\end{equation}
Now iterate $\widetilde\Psi$ to get a sequence of vectors $(\alpha_a^t:a\in \bZ_\sigma^d)$, $t=0,1,\ldots$
Due to strong irreducibility and the discrete nature of the dynamics, 
there exist a number $w(u)\ge 0$ and an integer $k\ge 1$ so that, for a large enough $t$,
$$
\alpha_a^{t+k}-\alpha_a^t=kw(u), 
$$
for every $a$. Due to monotonicity, 
$w(u)$ is independent of the initial vector $(\alpha_a^0)$.
This proves the existence of the half-space velocities.
 Now the theorem follows from methods from \cite{Wil, GG1, GG3}.
Observe also that $\tilde \Psi$ is set-additive, that is, for any $\widetilde S_a, 
\widetilde S_a'\subset\bR^d$, 
$$
\widetilde \Psi(\widetilde S_a\cup \widetilde S_a':a\in \bZ_\sigma^d)=
\widetilde \Psi(\widetilde S_a:a\in \bZ_\sigma^d)\cup
\widetilde \Psi(\widetilde S_a':a\in \bZ_\sigma^d), 
$$
where the second union is coordinate-wise. Writing a half-space as a union of 
its points, this implies that $K_{1/w}=L^*$ and thus $K_{1/w}$ is convex. 
\end{proof}

We now turn to examples. We will restrict ourselves to two-dimensional Moore neighborhood
{\it Tot $\theta$} rules (see Section~\ref{2d-section}). 
We start with the observation that it is quite possible that 
$W=\emptyset$. For example,  $\eta\equiv 0$ is a fixed state (with $\sigma=\pi=1$) 
for {\it Tot $\theta$} when $\theta\ge 1$ and has $W=\emptyset$ when $\theta\ge 2$. 

We start with $\theta=1$. We have generated all possible doubly periodic states 
with $\sigma\le 4$. There are 12 of them (modulo symmetries of the lattice $\bZ^2$) 
and none have $W=\emptyset$, although in four cases the interior of $W$ is empty. 
We provide two examples: 
\begin{itemize}
\item tile {\scriptsize
$\begin{matrix} 
0000\\
0000\\
0011\\
0011
\end{matrix}$}, $\pi=2$, first quarter vertices of $W$  $(2/3,0)$, $(2/5, 2/5)$, $(0,2/3)$, and the defect
density profile $\rho|_W\equiv 3/4$ on $W$ (Fig.~\ref{ex1shapes-examples:sub1});
\item tile {\scriptsize
$\begin{matrix} 
0000\\
0011\\
0000\\
1100
\end{matrix}$}, $W=[-2/3,2/3]\times \{0\}$, which has empty interior, thus $\rho\equiv 0$
(Fig.~\ref{ex1shapes-examples:sub2}).
\end{itemize}
For the first of these, Fig.~\ref{ex1frank} illustrates the relationship between the 
Frank diagram (the larger outline with first quarter vertices $(1,2/3)$, $(2/3,1)$, and the shape described in Theorem~\ref{2dshapes-thm}.

\begin{figure}[ht]
\hskip1cm
\begin{minipage}[t]{0.4\linewidth}
\vspace{0pt}
\vskip0.3cm
\includegraphics[trim=0cm 1cm 0cm 1cm, clip, width=5cm]{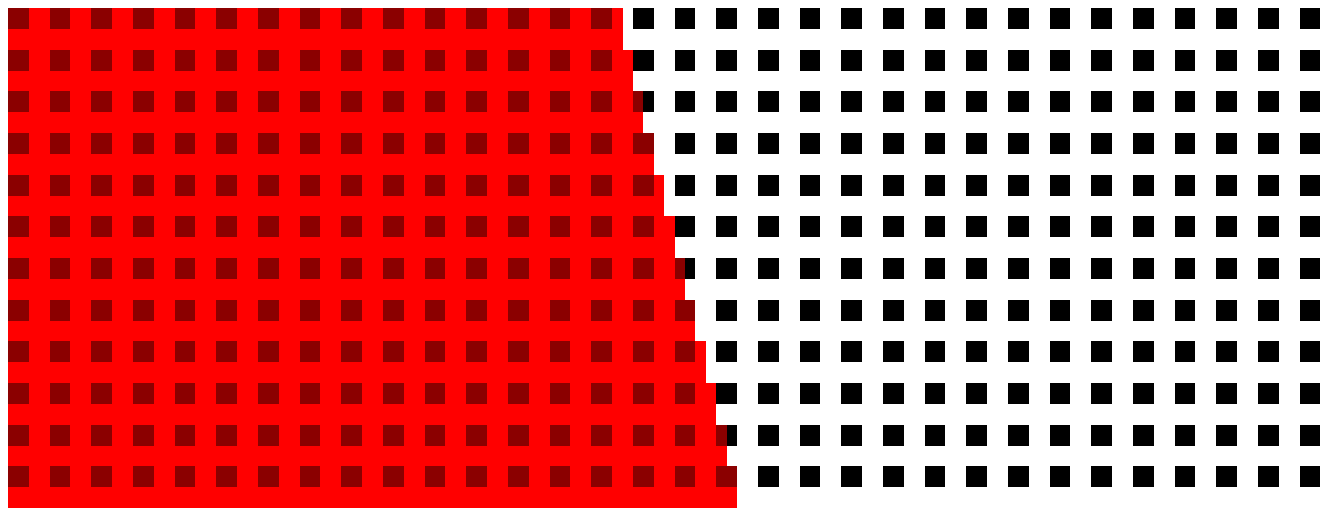}

\vskip-1cm
\includegraphics[trim=0cm 1cm 0cm 1cm, clip, width=5cm]{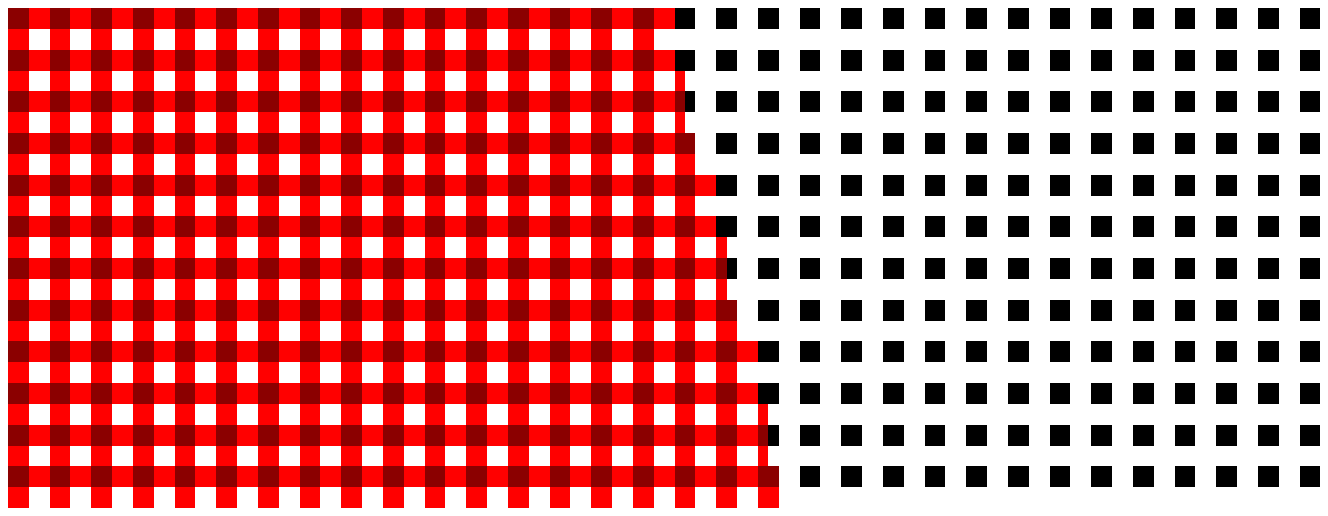}

\vskip-1cm
\includegraphics[trim=0cm 1cm 0cm 1cm, clip, width=5cm]{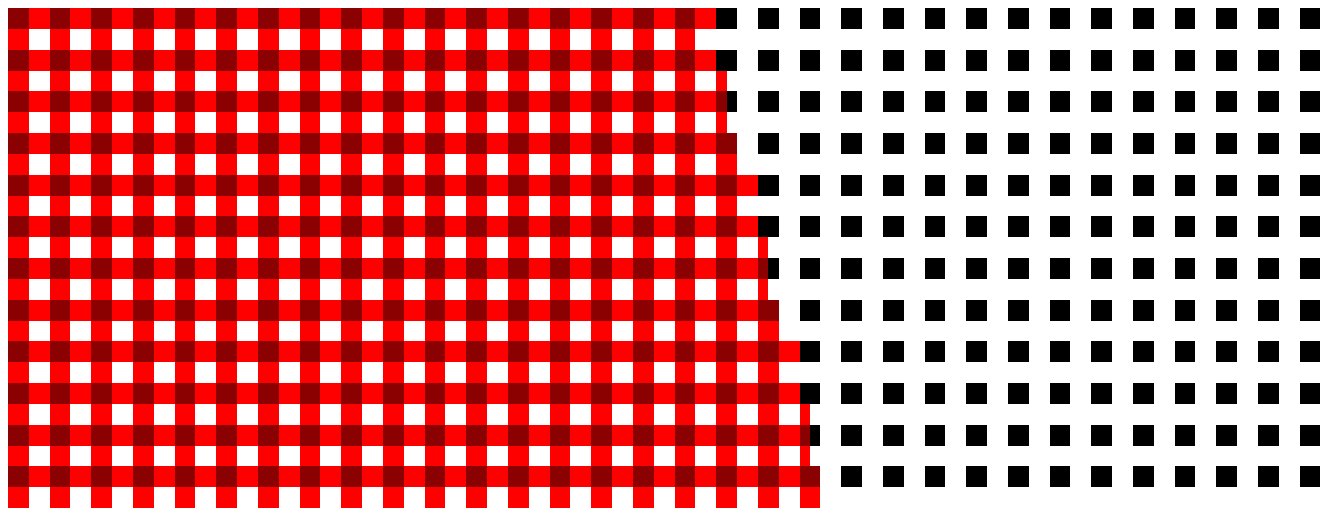}
\vfill\eject
\end{minipage}
\hskip-1cm
\begin{minipage}[t]{0.4\linewidth}
\vspace{0pt}
\includegraphics[trim=0cm 0cm 0cm 0cm, clip, width=8cm]{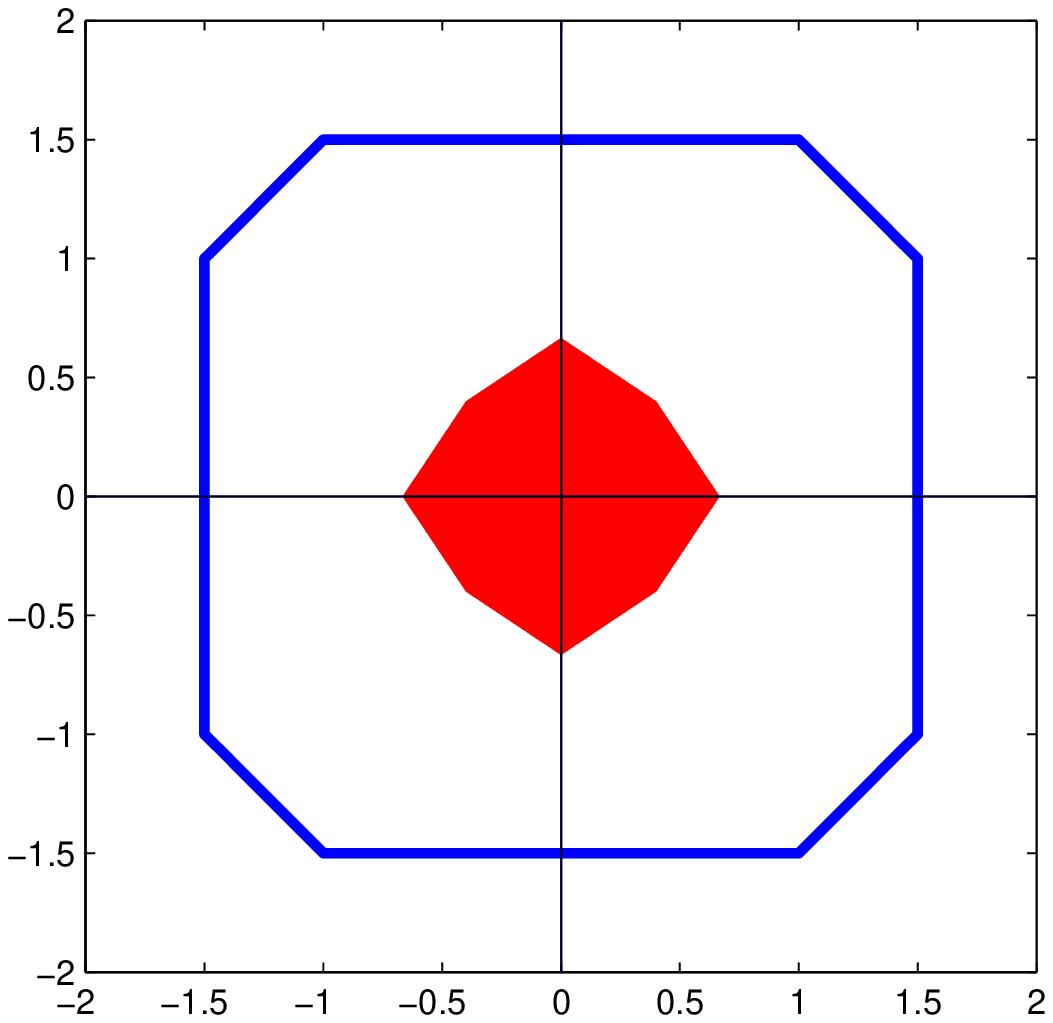}
\end{minipage}
\vspace{-0.5cm}
\caption{
The shape characterization for our first {\it Tot 1\/} example in the text. Left: 
propagation of the half-plane with boundary slope $-4$, depicted at times  
$t=0,6,12$.  The configuration at $t=12$ is a horizontal translation by $4$
of the one at $t=6$, which results in the advancement by $4$ every $6$ time steps.
Right: defect shape from the Frank diagram.}
\label{ex1frank} 
\end{figure}

When $\theta=3$ there are 24 doubly periodic states with $\sigma=4$, of which we selected a 
nonsymmetric shape:
\begin{itemize}
\item 
 tile {\scriptsize
$\begin{matrix} 
0000\\
0001\\
0010\\
1001
\end{matrix}$}, 
with $\pi=6$, eleven vertices 
$(\pm2/3,-1)$,
$(8/9,-8/9)$,
$(1,\pm2/3)$,
$(\pm 2/3,1)$,
$(-8/9,8/9)$,
$(-1,2/3)$,
$(-1,-1/3)$,
$(-8/9,-2/3)$, and $\rho|_W\equiv 5/6$ (Fig.~\ref{ex1shapes-examples:sub3}).
\end{itemize}
Our final example has $\theta=7$, 
\begin{itemize}
\item
tile 
 {\scriptsize
$\begin{matrix}
0111\\
1011\\
1110\\
1101\\
\end{matrix}$}
and $\pi=2$ (Fig.~\ref{ex1shapes-examples:sub4}). 
This case is clearly not essentially strongly 
irreducible. In fact, it is easy to check that defects 
on $0$s and $1$s do not communicate.  On $1$s the defects spread as fast as the light 
cone, resulting in the defect shape $[-1,1]^2$. However, the spread on $0$s is considerably 
slower, resulting in the inner symmetric octagon with two of its vertices $(1,0)$, $(2/3,2/3)$.
This octagon is not visible in the defect shape, but clearly shows up 
in the defect density profile $\rho$, 
which is $1$ on the octagon and $3/4$ on the region between the square and the octagon.
\end{itemize}
 
\begin{figure}[ht]
\begin{center}
\begin{subfigure}{.25\textwidth}
\centering
\includegraphics[trim=0cm 0cm 0cm 0cm, clip, width=3.5cm]{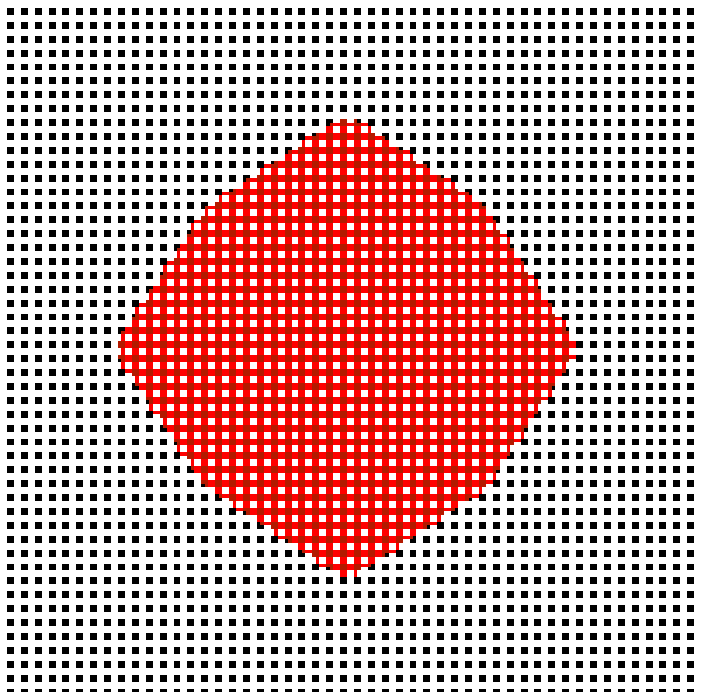}
\caption{}
  \label{ex1shapes-examples:sub1}
\end{subfigure}
\hskip-0.2cm
\begin{subfigure}{.25\textwidth}
\centering
\includegraphics[trim=0cm 0cm 0cm 0cm, clip, width=3.5cm]{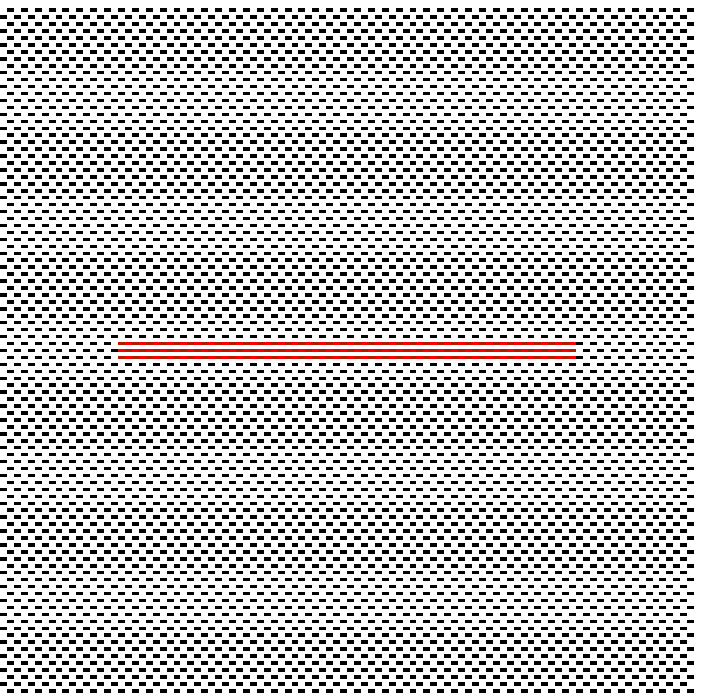}
\caption{}
  \label{ex1shapes-examples:sub2}
\end{subfigure}
\hskip-0.2cm
\begin{subfigure}{.25\textwidth}
\centering
\includegraphics[trim=0cm 0cm 0cm 0cm, clip, width=3.5cm]{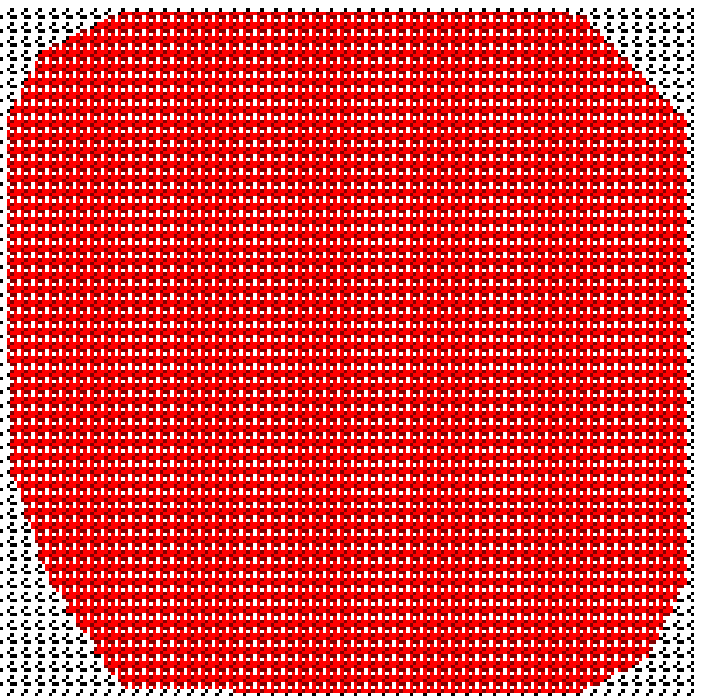}
\caption{}
  \label{ex1shapes-examples:sub3}
\end{subfigure}
\hskip-0.2cm
\begin{subfigure}{.25\textwidth}
\centering
\includegraphics[trim=0cm 0cm 0cm 0cm, clip, width=3.5cm]{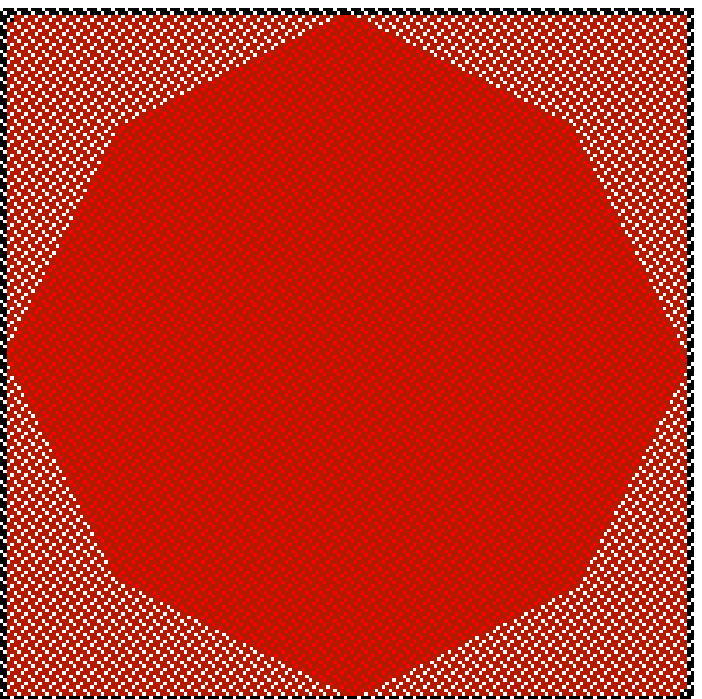}
\caption{}
  \label{ex1shapes-examples:sub4}
\end{subfigure}
\end{center}
\caption{Examples of defect shapes for  {\it Tot 1\/} ((a) and (b)), 
{\it Tot 3\/} (c), and {\it Tot 7\/} (d) rules.}
\label{ex1shapes-examples} 
\end{figure}

\subsection{Lyapunov profiles in one dimension}\label{periodic-profiles}

Our discussion on Lyapunov profiles of doubly periodic configurations will be 
limited to $d=1$ for simplicity. Most of our techniques extend readily to higher 
dimensions. 

\subsubsection{Variational principle in the irreducible case}\label{periodic-profiles-irreducible}

The input for the Lyapunov profile computation is the {\it expansion graph\/} $\cE$
that we define first. The vertices of this directed graph are numbers in 
$\bZ_\sigma=\{0,\ldots, \sigma-1\}$ and we attach to each edge $e$ of $\cE$ 
a {\it displacement 
label\/} $D(e)$ and a {\it size label\/} $N(e)$. 
For an $i\in \{0,\ldots,\sigma-1\}$, assume $\De_0$ is $1$ at $i$ and $0$ otherwise.
Suppose $\De_{\pi_0}$ has $n_i\ge 0$ nonzero values $N_j^{(i)}$ at
$i+D_j^{(i)}$, $j=1,\ldots, n_i$; each of these values generates an edge 
$(i,i+D_j^{(i)}-\sigma_0\mod \sigma)$ of $\cE$ emanating from $i$, 
with the displacement label $D_j^{(i)}$ and  size label $N_j^{(i)}$.
Note that an oriented pair of vertices $(i_1, i_2)$ may be joined 
by multiple edges with distinct displacement 
labels. Let $d_T=\sum_i n_i$ be the number of edges of $\cE$.
Assume the edges of $\cE$ are ordered $e_1,\ldots, e_{d_T}$, 
say lexicographically
among the oriented pairs of vertices and by increasing displacement label within the same oriented pair.

Construct the $d_T\times d_T$ matrix $T$ as follows. 
If edges $e_k$ and $e_{\ell}$ connect ordered
pairs $(i_1,j_1)$ and $(i_2,j_2)$ then 
$$
T_{k, \ell}=
\begin{cases}
N(e_k)  &\text{if } j_1=i_2\\
0 &\text{otherwise}
\end{cases}
$$
The {\it weight matrix\/} $W_y$, which depends on a real parameter $y$, 
is a diagonal matrix of the same size as $T$ given (using the order of edges) by
\begin{equation}\label{diagonal-matrix}
W_y=\text{diag}(
\exp(yD(e_k)), k=1,\ldots, d_T).
\end{equation}
The much simpler matrix is $T'$ is a $\sigma\times \sigma$ matrix indexed 
by vertices of $\cE$
with entries
$$
T_{i_1, i_2}'=\sum_{e\in \cE\text{ connects }(i_1,i_2)} N(e).
$$
Thus the matrix $T'$ counts defect paths that connect the $\sigma$ phases, while 
$T$ keeps track of their displacements as well. 

The large deviation principles that determine $L$ have particularly simple variational 
form when $\eta$ is irreducible, and therefore both $T$ and $T'$ are 
irreducible. This is the setting in the next theorem. 
We use the notation $\spr$ for the spectral radius of a matrix.

\begin{theorem}\label{LP-irreducible-thm}
Assume that $\eta$ is irreducible. Then $L$ is proper,
independent of $\De_0$,
and is given as follows. Let
\begin{equation}\label{log-spr}
\Lambda(y)=\log\spr(T\cdot W_y).
\end{equation}
Then the Lyapunov profile $L$ is given by the Legendre transform of $\Lambda$
that is, by 
\begin{equation}\label{Lp-eq}
L(\alpha)=\inf_{y\in \bR}(-y\alpha+\Lambda(y)).
\end{equation}
Furthermore, let $\lambda_1=\spr(T)$ be the 
largest eigenvalue of $T$. Then the MLE is given by 
\begin{equation}\label{MLE-eq}
\lambda=\log\spr(T')=\log\lambda_1.
\end{equation} 
For $k=1\ldots, d_T$ 
define constants $c_k$ so that the $k$th diagonal element $T^n_{kk}$ of $T^n$ satisfies
\begin{equation}\label{MLE-ck}
T^n_{kk}\sim c_k\lambda_1^n
\end{equation}
as $n\to\infty$. Then 
the unique MLE direction equals 
\begin{equation}\label{MLE-direction-eq}
\sum_{k=1}^{d_T} c_kD(e_k).
\end{equation} 
\end{theorem}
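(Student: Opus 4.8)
The plan is to identify the unique MLE direction with the derivative $\Lambda'(0)$ of the cumulant generating function, and then to show that this derivative is exactly the weighted displacement $\sum_k c_k D(e_k)$ via first‑order Perron–Frobenius perturbation theory. I take for granted the parts of the theorem already established above it: the Legendre representation (\ref{Lp-eq}) with $\Lambda(y)=\log\spr(T W_y)$, and the identity $\Lambda(0)=\log\spr(T)=\log\lambda_1=\lambda$ of (\ref{MLE-eq}), which holds because $W_0$ is the identity matrix.

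First I would carry out the duality step. The function $\Lambda(y)=\log\spr(TW_y)$ is convex: each entry $(TW_y)_{k\ell}=T_{k\ell}\,e^{yD(e_\ell)}$ is log-affine, hence log-convex, in $y$, so log-convexity of the Perron root is Kingman's theorem (equivalently, $\Lambda$ is the scaled cumulant generating function of the displacement accumulated along defect paths). Since (\ref{Lp-eq}) says $L(\alpha)=\inf_y(\Lambda(y)-y\alpha)=-\Lambda^*(\alpha)$, the conjugate satisfies $\Lambda^*(\alpha)\ge -\Lambda(0)$ with equality at $\alpha=\Lambda'(0)$, so $\max_\alpha L(\alpha)=-\min_\alpha\Lambda^*(\alpha)=\Lambda(0)=\lambda$. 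Concretely, whenever the infimum over $y$ is attained one differentiates to get $L'(\alpha)=-y(\alpha)$, where $\Lambda'(y(\alpha))=\alpha$; thus $L'(\alpha)=0$ forces $y(\alpha)=0$, i.e.\ $\alpha=\Lambda'(0)$. Strict convexity of $\Lambda$, which follows from irreducibility of $T$ (the same hypothesis the theorem invokes for uniqueness), makes $\Lambda'(0)$ the unique maximizer, so the MLE direction equals $\Lambda'(0)$, and it remains to compute this number.

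Next I would compute $\Lambda'(0)$ by perturbing the Perron root. Because $T$ is irreducible and $W_y$ is a positive diagonal matrix, $M(y)=TW_y$ shares the zero pattern of $T$ and is therefore irreducible; its spectral radius $\rho(y)=e^{\Lambda(y)}$ is a simple eigenvalue depending analytically on $y$, with strictly positive right and left eigenvectors $u,v$ at $y=0$ satisfying $Tu=\lambda_1 u$ and $v^{\top}T=\lambda_1 v^{\top}$. The standard first-order formula gives $\rho'(0)=\bigl(v^{\top}M'(0)u\bigr)/\bigl(v^{\top}u\bigr)$. From (\ref{diagonal-matrix}) one has $M'(0)=T\,\mathrm{diag}(D(e_k))$, and using $v^{\top}T=\lambda_1 v^{\top}$ this reduces to $\rho'(0)=\lambda_1\sum_k v_k u_k D(e_k)/(v^{\top}u)$. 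Dividing by $\rho(0)=\lambda_1$ yields $\Lambda'(0)=\sum_k u_k v_k D(e_k)/(v^{\top}u)$.

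Finally I would identify the coefficients $c_k$ of (\ref{MLE-ck}). By the Perron–Frobenius spectral decomposition, $\lambda_1^{-n}T^n\to P$, where $P=uv^{\top}/(v^{\top}u)$ is the rank-one projection onto the Perron eigenline; reading off the diagonal gives $c_k=P_{kk}=u_kv_k/(v^{\top}u)$, which also makes the normalization $\sum_k c_k=\operatorname{tr}P=1$ transparent. Substituting into the displacement sum gives $\sum_k c_k D(e_k)=\sum_k u_kv_k D(e_k)/(v^{\top}u)=\Lambda'(0)$, which is precisely the MLE direction established in the first step, proving (\ref{MLE-direction-eq}). The one point requiring care — and the main obstacle — is the possible periodicity of $T$: if $T$ is irreducible but not primitive, then $\lambda_1^{-n}T^n$ oscillates rather than converges, so the limit defining $c_k$ must be interpreted as a Cesàro average (equivalently, taken along multiples of the period and divided by the period). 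This averaging still returns $c_k=P_{kk}$ and keeps $\sum_k c_k=1$, so the conclusion is unchanged; verifying that this is the reading intended by the asymptotic notation in (\ref{MLE-ck}) is the only subtlety in an otherwise routine computation.
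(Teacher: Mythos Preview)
Your proof is correct, and it follows a genuinely different route from the paper's. The paper does not pass through the Legendre dual point $\Lambda'(0)$ at all; instead it uses the similarity $T=\lambda_1\Gamma^{-1}P\Gamma$ with $P$ stochastic and $\mu$ its stationary distribution, and then argues probabilistically: fixing an edge $e_1$, it computes the expected proportion of $e_1$-steps on a uniformly random defect path of length $t$ and shows this converges to $\mu_1$ (so the expected displacement per step converges to $\sum_k\mu_kD(e_k)$), and a second-moment calculation upgrades this to concentration. The identification $c_k=\mu_k$ then comes directly from $T^n_{kk}=\lambda_1^nP^n_{kk}\to\lambda_1^n\mu_k$.

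Your argument replaces the path-counting and second-moment work by the duality observation that the maximizer of $L$ is $\Lambda'(0)$, and then reads off $\Lambda'(0)$ by first-order perturbation of the Perron root. This is cleaner analytically and makes the normalization $\sum_k c_k=1$ transparent via $\operatorname{tr}P=1$. The paper's approach, by contrast, makes the probabilistic meaning of the MLE direction explicit (it is the almost-sure average displacement along a typical defect path) and handles the concentration step directly rather than hiding it inside the large-deviation formalism already invoked for (\ref{Lp-eq}). Your caveat about periodicity of $T$ is well taken; the paper's proof is silent on this but its Ces\`aro-style time average in the expected-proportion calculation effectively handles it in the same way you propose. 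One small point: strict convexity of $\Lambda$ is not a consequence of irreducibility of $T$ alone---it also needs the displacements $D(e_k)$ to be nonconstant---but the conclusion (uniqueness of the maximizer at $\Lambda'(0)$) survives the affine degenerate case anyway, so this does not affect your argument.
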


\begin{proof}
Apart from (\ref{MLE-direction-eq}), the claims follows from
standard large deviation theory and Perron-Frobenius theory 
(see Section 3.1 in \cite{DZ}).

To verify (\ref{MLE-direction-eq}), we use further results 
on asymptotics of nonnegative matrices. By Section 5 of \cite{FS}, 
there exist a diagonal matrix 
$\Gamma=\text{diag}(\gamma_1,\ldots, \gamma_{d_T})$ 
with all $\gamma_i>0$ and a stochastic matrix $P$ so that 
$T=\lambda_1\Gamma^{-1}P\Gamma$. Then
$$
T_{ij}^n=\lambda_1^n\gamma_i^{-1}\gamma_j P_{ij}^n.
$$
Let $\mu=(\mu_1,\ldots, \mu_{d_T})$ be the probability 
measure that is a left eigenvector of $P$. 

Assume that the initial edge is $e_k$ and that 
$D$ is $1$ on $e_1$ and $0$ otherwise. By linearity,
this suffices. The expected proportion of the edge
$e_1$ on a path of length $t$ chosen uniformly at random is
$$
\frac 1{t+1} \sum_{s=0}^{t}\frac {\sum_\ell T_{k1}^s T_{1\ell}^{t-s}}
{\sum_\ell T_{k\ell}^t}
\xrightarrow[{t\to\infty}]{} \frac{\sum_\ell \gamma_k^{-1}\gamma_1\mu_1\gamma_1^{-1}\gamma_\ell\mu_\ell}
{\sum_\ell \gamma_k^{-1}\gamma_\ell\mu_\ell}=\mu_1
$$
and 
$$T_{11}^n\sim\mu_1\lambda_1^n,$$
so that $c_1=\mu_1$ in (\ref{MLE-ck}).
A similar computation also handles the second moment and finishes the proof. 
\end{proof}

The constants $c_k$ can be readily obtained by linear algebra; for example, 
if $T$ has an invertible eigenvector matrix $V$, with the first 
column being the eigenvector of $\lambda_1$, and we let $I_{1}$ be the matrix with 
a $1$ at position $11$ and $0$s elsewhere, then $c_k=(VI_{1}V^{-1})_{kk}$. 

Next, we give three examples. 
In Figs.~\ref{ex1_lp} and~\ref{r110ether_lp}, we compare the approximation to the 
defect profile at a modest finite time 
to the limit given by Theorem~\ref{LP-irreducible-thm}.

\subsubsection{Two examples for {\it Rule 22\/}}\label{periodic-profiles-rule22}

For illustration we begin with perhaps 
the simplest nontrivial case, the fixed point $(10)^\infty$
for {\it Exactly 1\/}. Hence, $\sigma=2$ and $\pi=1$. We specify the tile to be $10$; we 
always assume that the leftmost state of the tile is at the origin, which specifies
the states of $\cE$. It is easy to check that a defect at $(x,t)$:
\begin{itemize}
\item  creates 3 children located on a 1 at $(x,t+1)$ and on $0$s at
$(x\pm 1, t+1)$, if $\ca_t(x)=1$; and
\item  creates 2 children on $1$s at  
$(x\pm 1, t+1)$, if $\ca_t(x)=0$. 
\end{itemize}
This describes the graph $\cE$, which has 5 edges,  
thus $d_T=5$,
$$
T=
\begin{bmatrix}
0 &0 &0 &1 &1\\ 
1 &1 &1 &0 &0\\ 
   0 &0 &0 &1 &1\\
   1 &1 &1 &0 &0\\
   1 &1 &1 &0 &0\\
\end{bmatrix}
$$
and $D=(-1,0,1,-1,1)$. 
The resulting density profile, which is
given in Fig.~\ref{ex1_lp:sub1}, 
is nonnegative on $W=[-1,1]$,
vanishes at the boundary, and its MLE is about $0.941$.
In fact, we can give the precise 
value for the MLE, 
$$
\lambda=\log\spr T'=\log\spr \begin{bmatrix} 1 & 2\\ 2 & 0 \end{bmatrix}=\log\frac{1+\sqrt{17}}2.
$$

For our second {\it Exactly 1\/} example, consider the doubly periodic configuration given 
by the tile $1^60^2$, 
which has $\sigma=8$, $\pi=6$, $\pi_0=3$ and $\sigma_0=4$.
Now the defects at the middle two $1s$ die due to the fact that $111\mapsto 1$ 
is a stable update. Thus we only need to consider 6 states for our graph $\cE$. 
This graph has no multiple edges, so we only need to specify the matrix $T'$ and 
the displacements associated with each entry. These are given in the Table~\ref{ex1-table2}, 
from which we conclude that $d_T=22$.  
The resulting Lyapunov profile is given in Fig.~\ref{ex1_lp:sub2}. In this case 
there is a nontrivial defect density $\rho$ that equals $3/4$ on $W=[-2/3,2/3]$, 
$L$ equals $\log(2)/6\approx 0.116$ at $\pm 2/3$, and the
MLE is about $0.638$. 

\begin{table}
\centering
\begin{quote}
\caption{Specification for the graph $\cE$ for the {\it Exactly 1\/} example
with tile $1^60^2$. 
The states 2 and 3, which produce no children, are left out.}\label{ex1-table2}
\end{quote}
\vspace{-0.8cm}
\begin{tabular}{| c | c | c | c | c | c | c | c |}
			\hline
 \begin{tabular}{@{}c@{}}$i\in\bZ_\sigma$\end{tabular}
& \begin{tabular}{@{}c@{}}tile\end{tabular}
& \begin{tabular}{@{}c@{}} $N_0^{(i)},D_0^{(i)}$\end{tabular}
& \begin{tabular}{@{}c@{}} $N_1^{(i)},D_1^{(i)}$\end{tabular}
& \begin{tabular}{@{}c@{}} $N_4^{(i)},D_4^{(i)}$\end{tabular}
& \begin{tabular}{@{}c@{}} $N_5^{(i)},D_5^{(i)}$\end{tabular}
& \begin{tabular}{@{}c@{}} $N_6^{(i)},D_6^{(i)}$\end{tabular}
& \begin{tabular}{@{}c@{}} $N_7^{(i)},D_7^{(i)}$\end{tabular}
  \\
			\hline
			\hline
0 & 1 &   & $1,-3$  & $4,0$  & $3,1$  & $1,2$  &   \\ \hline
1 & 1 &   &       & $2,-1$ & $2,0$  & $1,1$  &  \\ \hline
4 & 1 &$2,0$& $2,1$   &      &      &      & $1,-1$ \\ \hline  
5 & 1 &$3,-1$&$4,0$   & $1,3$  &      &      & $1,-2$ \\ \hline       
6 & 0 &$1,-2$&$3,-1$  & $3,2$  & $1,3$  &      &  \\ \hline       
7 & 0 &$1,-3$&$3,-2$  & $3,1$  & $1,2$  &      &  \\ \hline       
\end{tabular}
\end{table}
 
\begin{figure}[ht]
\begin{center}
\begin{subfigure}{.45\textwidth}
\centering
\includegraphics[trim=1cm 2cm 0cm 0cm, clip, width=8cm]{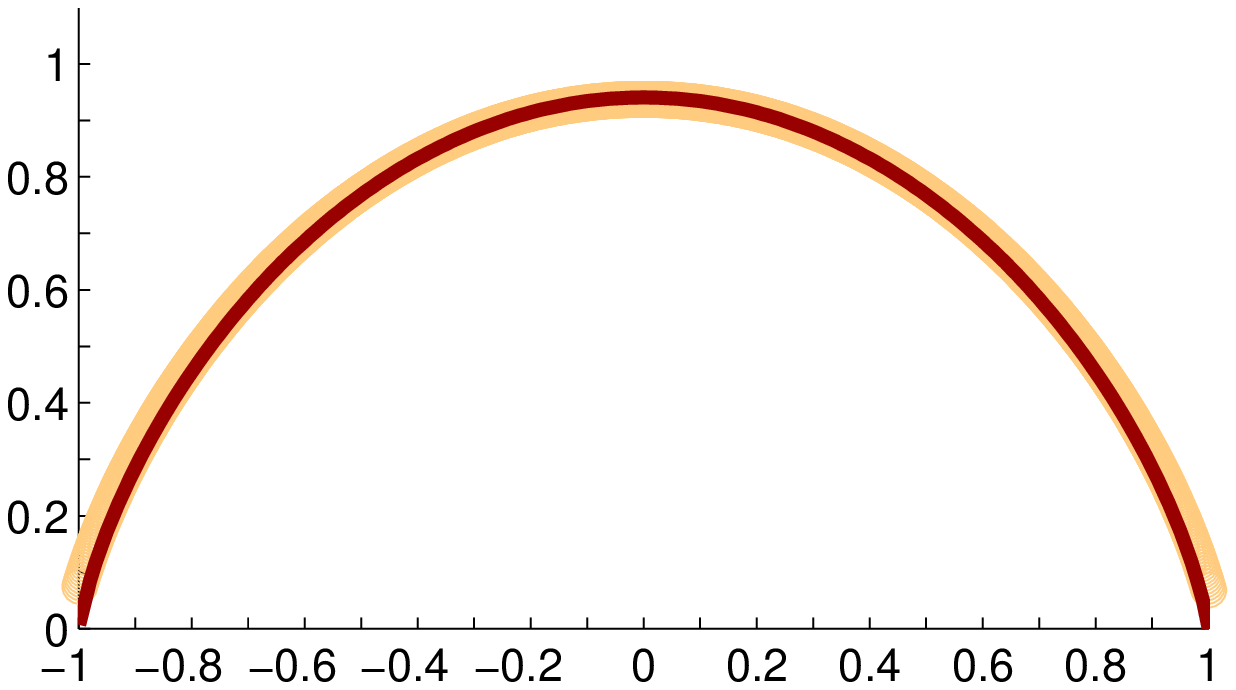}
\caption{}
  \label{ex1_lp:sub1}
\end{subfigure}
\hskip0.5cm
\begin{subfigure}{.45\textwidth}
\centering
\includegraphics[trim=1cm 2cm 0cm 0cm, clip, width=8cm]{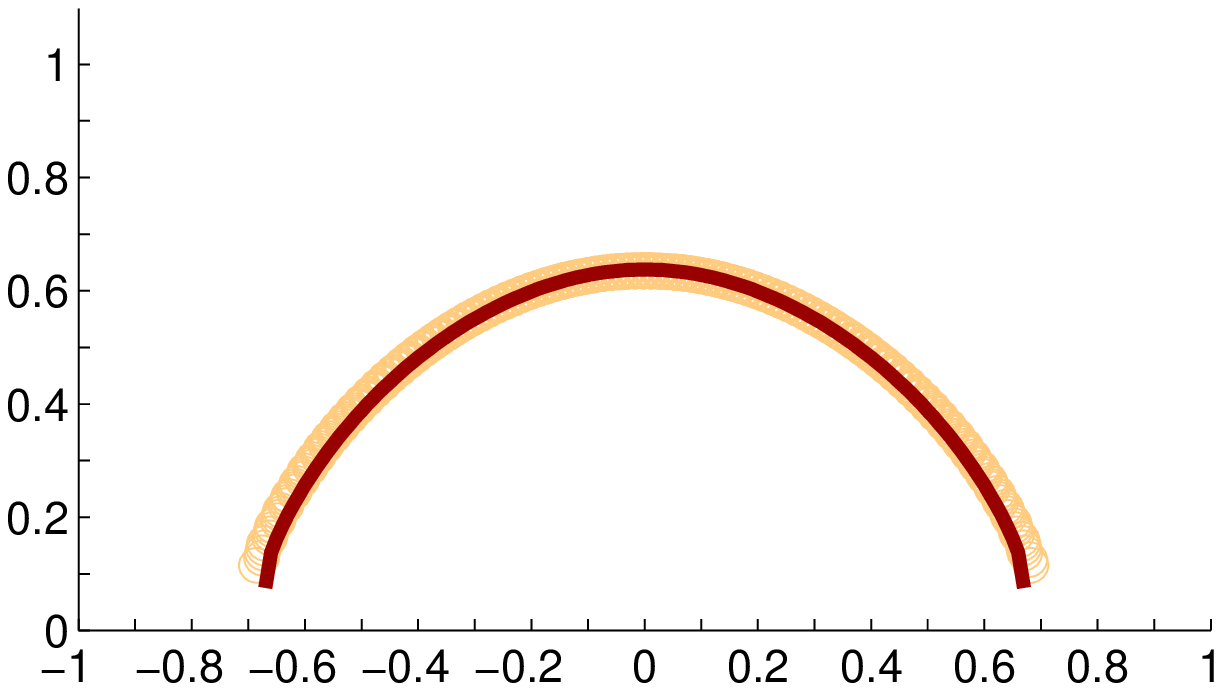}
\caption{}
  \label{ex1_lp:sub2}
\end{subfigure}
\end{center}
\vskip-0.5cm
\caption{The Lyapunov profiles (dark red) on $[-1,1]$ and their approximations 
(light red circles) at time
$560$ for two {\it Exactly 1\/} examples, with doubly periodic initial states (a) $(10)^\infty$, 
and (b) $(1^60^2)^\infty$.}
\label{ex1_lp} 
\end{figure}

\subsubsection{A {\it Rule 110\/} example}\label{periodic-profiles-rule110}

Perhaps the most important example of our method is the Lyapunov profile 
for the {\it Rule 110\/} {\it ether\/} \cite{Coo}. This is a doubly
periodic solution with $\sigma=14$,  $\tau=7$, 
$\tau_0=1$, $\sigma_0=10$, and tile $1^50^310^21^20$.
This ether supports a variety of gliders with complex interactions
(in fact, as complex as possible
\cite{Coo}). As mentioned in Section~\ref{intro}, it remains unresolved whether,
starting from the uniform product measure, the Lyapunov profile 
agrees with the one started from the ether.
 We now proceed to describe the 
latter profile. The expansion graph is rather sparse and is given in 
Table~\ref{rule110-table}:
for any $i$, and an edge $i\to j$, $j$ is given in the 
column corresponding to $D_j^{(i)}$ either $-1$, $0$ or $1$; these are the only 
displacement values and all corresponding $N_j^{(i)}=1$. Thus $d_T=26$.
 
\begin{table}[ht!]
\centering
\begin{quote}
\caption{Specification for the graph $\cE$ for the {\it Rule 110\/} ether. 
}
\label{rule110-table}
\end{quote}
\vspace{-0.4cm}
\begin{tabular}{| c | c | c | c | c |c|}
			\hline
 \begin{tabular}{@{}c@{}}$i\in\bZ_\sigma$\end{tabular}
& \begin{tabular}{@{}c@{}}tile\end{tabular}
& \begin{tabular}{@{}c@{}}$j$: $D_j^{(i)}=-1$\end{tabular}
& \begin{tabular}{@{}c@{}}$j$: $D_j^{(i)}=0$\end{tabular}
& \begin{tabular}{@{}c@{}}$j$: $D_j^{(i)}=1$\end{tabular}
  \\
			\hline
			\hline
0 & 1 & 4 &   & 6 \\ \hline
1 & 1 &   & 6 & 7 \\ \hline
2 & 1 & 6 & 7 & 8 \\ \hline
3 & 1 & 7 & 8 &       \\ \hline
4 & 1 & 8 & 9 &        \\ \hline
5 & 0 & 9 &10 &       \\ \hline
6 & 0 &10 &11 &      \\ \hline
7 & 0 &11 &   &       \\ \hline
8 & 1 &12 &13 &        \\ \hline
9 & 0 &   &14 &       \\ \hline
10 &0 &14 &   &2       \\ \hline
11 &1 & 1 &   &        \\ \hline
12 &1 &   & 3 &        \\ \hline
13 &0 & 3 & 4 &5       \\ \hline
\end{tabular}
\end{table}

The profile, given in Fig.~\ref{r110ether_lp}, is nonnegative on $[-8/9,2/3]$, 
vanishes at $2/3$ and equals $\log 3/9\approx 0.122$ at $-8/9$. 
The MLE equals about $0.647$ and is attained at
the MLE direction about $-0.276$.
We remark that the defect shape and values of 
$L$ at the boundaries, obtained here by a boundary 
analysis of defect dynamics, are closely connected to the spectral behavior of
perturbed nilpotent matrices \cite{EM}. 

\begin{figure}[ht!]
\vspace{1cm}
\begin{center}
\includegraphics[trim=0cm 0cm 0cm 1cm, clip, width=10cm]{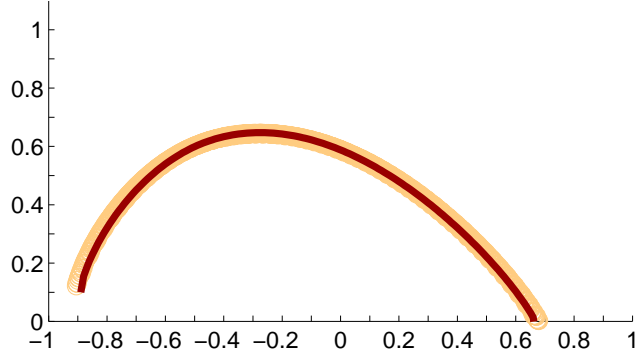}
\end{center}
\vspace{-1.5cm}
\caption{The Lyapunov profile (dark red) on $[-1,1]$ and its approximation (light red circles) at time
$560$ for the {\it Rule 110\/} ether.}
\label{r110ether_lp} 
\end{figure}

\subsubsection{Variational principle in the reducible case}\label{periodic-profiles-reducible}

If $T'$ is not essentially irreducible, but contains states 
that connect to several irreducible classes one can still characterize the Lyapunov profile
by a variational principle, which is, however, multidimensional.
We will state it below, but we first 
give two examples to show that the defect shape is 
not necessarily convex and that
the defect profile is not necessarily a concave function. The simplest ECA example 
is {\it Rule 184\/} with doubly periodic state with tile $01$ which has 
$\sigma=\pi=2$.  This generates $W=\{-1,1\}$ with $L=0$ on $W$. For a simple 
example with $W=[-2,2]$, consider
the CA with $\cN=\{0,\pm 1, \pm 2\}$ and the update function given by  
$00101\mapsto 0$, $01011\mapsto 1$, and in all other cases $abcde\mapsto c$. Clearly, 
$(01)^\infty$ is a fixed point, thus has $\pi=1$. Also, it is easy to see that, 
provided that the support of $\De_0$ includes both an even integer and 
an odd integer, the profile is given by
$$
L(\alpha)=\begin{cases}
-\dfrac{|\alpha|}2 \log\dfrac{|\alpha|}2-\left(1-\dfrac{|\alpha|}2\right) \log\left(1-\dfrac{|\alpha|}2\right)&\text{ if } |\alpha|\le 2,\\
-\infty &\text{ otherwise. }
\end{cases}
$$
In this case the MLE equals $\log 2$, and in both examples there are two MLE directions, namely 
$\pm 1$. 

\newcommand{\measures}{\mathcal P}
\newcommand{\kernels} {\mathcal K}  
Let $\measures$ be the set of probability measures on $\{1,\ldots, d_T\}$. 
For a given $\alpha\in \bR$, let 
$$
\measures_\alpha=\{(\mu_1,\ldots, \mu_{d_T})\in \cP: \sum_k\mu_kD(e_k)=\alpha\}.
$$
Write $k\leadsto \ell$ if $k=\ell$ or $T^n_{k\ell}$ is positive for some $n$; 
that is, an oriented path in the graph $\cE$ leads from edge $e_k$ to edge $e_\ell$. 
Moreover, for a given $b\in\{1,\ldots, d_T\}$, let 
$$
\begin{aligned}
\measures_b'=\{(\mu_1,\ldots, \mu_{d_T})\in \cP: &\text{ for all $k,\ell\in \{1,\ldots,d_T\}$,}\\
 &\text{ if } b\not\leadsto k\text{ then }\mu_k=0
\text{, and if } \ell\not\leadsto k\text{ and }k\not\leadsto \ell\text{ then }\mu_k\mu_\ell=0\}.
\end{aligned}
$$
For any $\mu\in \measures$, let $\kernels_\mu$ be 
the set of all $d_T\times d_T$ stochastic matrices $q=(q_{k\ell})$  that leave $\mu$ invariant, 
that is, they have positive entries and satisfy $\sum_\ell q_{k\ell}=1$, for all $k$, 
and $\sum_k\mu_kq_{k\ell}=\mu_\ell$, for all $\ell$.
The expression that plays a role related to the relative entropy is the function 
$H$ defined on $\kernels\times \measures$ by
$$
H(q,\mu)=\sum_{k,\ell} \mu_kq_{k\ell} \log \frac {T_{k\ell}}{q_{k\ell}}.
$$
 
\begin{theorem} \label{LP-general-thm}
Assume that a doubly periodic state $\eta$ is the 
initial CA state $\ca_0$. Fix also an initial set 
$\De_0$ and let 
$$
B_0=\{b\in \{1,\ldots, d_T\}: \text{ the edge }e_b\text{ originates from }x\mmod \sigma 
\text{ for some }x\in \De _0\}.$$
Then the Lyapunov profile is proper and given by the following triple supremum
$$
L(\alpha)=\sup_{b\in B_0}\,\,\,\sup_{\mu\in \measures_\alpha\cap \measures'_b}
\,\,\,\sup_{q\in \kernels_\mu} 
H(q,\mu).
$$
\end{theorem}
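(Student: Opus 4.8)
The plan is to identify $L(\alpha)$ as the exponential growth rate of an $N$-weighted path count in the expansion graph $\cE$, and then to read off the variational formula from the method of types for Markov paths, treating the reducible structure of $T$ by hand. First I would make the dictionary between defect accumulation and paths in $\cE$ precise. By the very construction of $\cE$, if $\De_0$ charges exactly the phases indexing $B_0$, then after $n$ applications of $\Phi^{\pi_0}$ the number of defects deposited at the site whose displacement from the origin (in the co-moving frame that undoes the shift $\sigma_0$) equals $m$ is
$$
\sum_{b\in B_0}\ \sum_{\substack{e_{k_0}\to\cdots\to e_{k_n}\ \text{in }\cE\\ k_0=b,\ \sum_{i} D(e_{k_i})=m}} \ \prod_{i} N(e_{k_i}),
$$
which is a sum of entries of powers of $T$ sorted by the displacement labels. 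Consequently, writing $\alpha$ for displacement per step, $L(\alpha)$ is the exponential growth rate, as $n\to\infty$, of the total $N$-weighted count of length-$n$ paths started in $B_0$ whose mean displacement is close to $\alpha$. (The passage from ``per step'' to the $x/t$ normalization of (\ref{lyapunov-profile-def}) is a fixed rescaling by $\pi_0$, which I suppress.)

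Next I would invoke the combinatorial level-two large deviation estimates for finite Markov chains (Section~3.1 of \cite{DZ}). For a length-$n$ path record the transition type $\nu_{k\ell}=n^{-1}\#\{i:(k_i,k_{i+1})=(k,\ell)\}$; up to boundary terms that vanish as $n\to\infty$, flow conservation forces $\nu$ to factor as $\nu_{k\ell}=\mu_kq_{k\ell}$ with $\mu$ the $q$-invariant marginal and $q\in\kernels_\mu$ supported on the edges of $\cE$ (so $q_{k\ell}=0$ whenever $T_{k\ell}=0$, with the convention $0\log 0=0$, and only the values of $q$ on $\{\mu_k>0\}$ entering). The number of paths of a fixed type grows like $\exp\!\big(-n\sum_{k,\ell}\mu_kq_{k\ell}\log q_{k\ell}\big)$, while each carries weight $\prod(T_{k\ell})^{n\nu_{k\ell}}=\exp\!\big(n\sum_{k,\ell}\mu_kq_{k\ell}\log T_{k\ell}\big)$; the two exponents add to exactly $nH(q,\mu)$. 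Since there are only polynomially many types, summing over them under the displacement constraint $\sum_k\mu_kD(e_k)=\alpha$ (that is, $\mu\in\measures_\alpha$) turns the $\frac1n\log$-limit into the supremum of $H(q,\mu)$; because the per-type count is a genuine two-sided bound, this simultaneously yields that $L$ is proper.

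The step I expect to be the main obstacle is the reducible bookkeeping that produces the constraint set $\measures'_b$. Two features must be reconciled. First, along a path started at $e_b$ the empirical measure can only charge edges reachable from $b$, and, since the path crosses the strongly connected components (SCCs) of $\cE$ in a fixed order and never re-enters one it has left, in the limit it cannot charge two $\leadsto$-incomparable edges with positive frequency; this gives the inclusion of realizable limits into $\measures'_b$. Second, any edge of positive limiting frequency is traversed $\Theta(n)$ times, hence lies inside an SCC, so $\text{supp}(\mu)$ is a union of SCCs; and the invariance $\sum_k\mu_kq_{k\ell}=\mu_\ell$ forbids net flow between distinct SCCs, forcing $q\in\kernels_\mu$ to be block-diagonal along the chain of SCCs meeting $\text{supp}(\mu)$.

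For the matching lower bound I would, given $(\mu,q)\in\measures_\alpha\cap\measures'_b$, construct paths that run from $e_b$ through transient edges into the first SCC of $\text{supp}(\mu)$ and then spend a fraction $\approx\mu(C_i)$ of their time in each component $C_i$, visited in the forced linear order, realizing the within-component frequencies $q|_{C_i}$ up to $o(1)$; the total ordering guaranteed by $\measures'_b$ is exactly what lets a single path realize this mixture. Degenerate targets (where some $q|_{C_i}$ fails to be irreducible on the support of $\mu|_{C_i}$) are handled by perturbing $q$ to be irreducible on the same chain, realizing the perturbation exactly, and passing to the limit using continuity of $(\mu,q)\mapsto H(q,\mu)$ and of the linear displacement functional. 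Assembling the upper and lower bounds over $b\in B_0$ and over $\measures_\alpha\cap\measures'_b$ yields the claimed triple supremum.
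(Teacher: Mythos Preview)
Your argument is correct and is essentially the paper's approach: the paper's proof simply cites the general large deviation theorem for (possibly reducible) finite Markov chains (Corollary~13.6 and Section~13.3 of \cite{RS}) together with the contraction principle (Section~4.2.1 of \cite{DZ}), and then takes the supremum over starting edges $b\in B_0$. Your method-of-types derivation and explicit SCC bookkeeping for the constraint $\measures'_b$ are a self-contained unpacking of exactly what those references supply.
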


\begin{proof}
Assuming the defect paths must start with a 
fixed $b\in B_0$, the result follows from the general large deviation theorem 
for finite Markov chains (see Corollary 13.6 and Section 13.3 in \cite{RS}) and 
the Contraction principle (Section 4.2.1 in \cite{DZ}). Further, it is clear that the  
profile is obtained by the supremum over all possible choices of edges out of
$\De_0$.
\end{proof}

\section{Conclusions and open problems}

The introduced non-equilibrium defect dynamics allows a simultaneous study of both the 
spatial extent and 
local accumulation of perturbations on a CA trajectory. The resulting Lyapunov profiles
reveal quite a bit more information than the equilibrium version 
of Bagnoli et al.~\cite{BRR}. In particular, we provide a division of CA trajectories
into three classes: in expansive cases defects spread (on the lattice and in their 
state space), in collapsing cases they die out, and in marginal cases they do neither 
of the two. Employing a mixture of rigorous and empirical methods, we classify
all elementary CA starting from translation invariant product measures. 
We also make theoretical progress in the case of periodic initial conditions, 
where asymptotic shapes and large deviation rates are the main components of a 
Floquet theory for CA. 

Our approach retains some 
of the spirit of the Wofram's damage spreading \cite{Wol1}, although, as we have 
seen, it is fundamentally different and further insights into connections between the 
two would be welcome. In fact, the entire paper can be read as an invitation into a
new topic with a wealth of intriguing open problems (many of which were mentioned 
in previous sections), and we conclude with a selection of
them:

\begin{enumerate}
\item Can one prove that a CA trajectory has a proper Lyapunov profile under 
general conditions? Is there a simple example with a non-proper profile?
\item Can one understand which properties of a CA cause a phase transition between 
marginal and expansive dynamics 
as the initial density $p$ of $1$s 
varies, such as in the example at the end of Section~\ref{prelim-defdam}? Can 
one determine the critical $p$ in that example?
\item For {\it Rule 38\/} and other expansive stripes rules, is it possible to 
provide rigorous (numerical) bounds on the MLE and its direction? 
\item For general stripes CA, can one 
prove, under proper conditions, the difference between $W$ and $W_\damage$ discussed in
 Section~\ref{prelim-defdam}?
\item Is it possible to extend Theorem~\ref{wc-ub} to higher 
dimensions and thus give a general sufficient condition that a rule 
is marginal?
\item Does there exist a general algorithm to exactly determine the MLE 
for marginal CA, such as those in Table~\ref{ECA-marginal}?
\item Can one prove that all rules in Table~\ref{ECA-expansive} are indeed expansive?
\item Is it possible to classify glider collisions for CA in Table~\ref{ECA-mystery}
and then show that each rule belongs to the conjectured class? 
\item Can a rigorous damage spreading theory be developed for periodic states? 
\item Does the following version of irreducibility hold for all rules
in Table~\ref{ECA-expansive}: if $\xi_0$ is the uniform product 
measure and $A\subset\bZ$ is finite, then either 
$L_A\equiv -\infty$ or $L_A=L_\infty$?
\end{enumerate}

\section*{Acknowledgements}
This project was partially
funded by the Erasmus Mundus Programme of the European Commission under the
Transatlantic Partnership for Excellence in Engineering Project. We gratefully acknowledge
the assistance of STEVIN Supercomputer Infrastructure at Ghent University.
Janko Gravner was partially supported by the Simons Foundation Award \#281309
and the Republic of Slovenia's Ministry of Science
program P1-285.  


\begin{thebibliography}{99}

 

\bibitem[{BD}]{BD} J.~M.~Baetens, B.~De Baets, {\it
Phenomenological study of irregular cellular automata based on Lyapunov
exponents and Jacobians\/}, 
Chaos 20 (2010), 033112, 1--15. 

\bibitem[{BER}]{BER} F.~Bagnoli, S.~El Yacoubi, R.~Rechtman, {\it
Control of cellular automata\/}, Physical Review E 86 (2012), 
066201--7,
DOI: 10.1103/PhysRevE.86.066201. 

\bibitem[{BF}]{BF}
V.~Belitsky, P.~A,~Ferrari, {\it 
Ballistic annihilation and deterministic surface growth\/},
Journal of Statistical Physics 80 (1995), 517--543. 

\bibitem[{BG}]{BG} J.~M.~Baetens, J.~Gravner, {\it
Introducing Lyapunov profiles of
cellular automata\/},  in ``Proceedings of the 20th 
International Workshop on Cellular Automata and Discrete Complex Systems 
(AUTOMATA 2014) Himeji, Japan, July 2014,'' T.~Isokawa, K.~Imai, N.~Matsuin, F.~Peper, and 
H.~Umeo, editors, pp.~133--140.  {\tt arXiv:1509.06639}

\bibitem[{Big}]{Big}
J.~D.~Biggins, {\it 
The growth and spread of the general branching random walk\/}, 
Annals of Applied Probability 5 (1995), 1008--1024. 

\bibitem[{BRR}]{BRR} F.~Bagnoli, R.~Rechtman, S.~Ruffo, {\it 
Damage spreading and Lyapunov exponents in cellular automata\/}, 
Physics Letters A 172 (1992), 34--38.


\bibitem[{BNT}]{BNT} M.~Bramson, P.~Ney,  J.~Tao, {\it
The population composition of a multitype branching random walk\/},
Annals of Applied Probability 2 (1992), 519--765.

\bibitem[{CK}]{CK}
M.~Courbage, B.~Kami\'nski, {\it Space-time directional Lyapunov exponents for cellular
automata\/}, Journal of Statistical Physics 124 (2006) 1499--1509.

\bibitem[{Coo}]{Coo} M.~Cook,
{\it Universality in elementary cellular automata\/},
Complex Systems 15 (2004), 1--40.

\bibitem[{DS}]{DS}  R.~Durrett, J.~Steif, 
{\it Some rigorous results for the Greenberg-Hastings 
model\/}, Journal of Theoretical Probability (1991), 669--690. 

\bibitem[{DZ}]{DZ}  A.~Dembo, O.~Zeitouni, {\it 
Large Deviations Techniques and Applications\/}, Second Edition. Springer, 1998. 

\bibitem[{EM}]{EM} A.~Edelman, Y.~Ma, {\it 
Non-generic eigenvalue perturbations of Jordan blocks\/}, 
Linear Algebra and Applications 273 (1998), 45--63.
  
\bibitem[{FMM}]{FMM}
M.~Finelli, G.~Manzini, L.~Margara, {\it Lyapunov exponents versus expansivity and
sensitivity in cellular automata\/} Journal of Complexity 14 (1998), 210--233.

\bibitem[{FS}]{FS} S.~Friedland, H.~Schneider, {\it 
The growth of powers of a nonnegative matrix\/}, 
SIAM Journal on Algebraic Discrete Methods 1 (1980), 185--200.

\bibitem[{Gig}]{Gig}
M.-H.~Giga, Y.~Giga, {\it Evolving graphs by singular weighted curvature\/},
Archive for Rational Mechanics and Analysis 141 (1998), 117--198.

\bibitem[{Gra1}]{Gra1} P. Grassberger, {\it 
Chaos and diffusion in deterministic cellular automata\/}, Physica
D 10 (1984), 52--58.

\bibitem[{Gra2}]{Gra2} P. Grassberger, {\it 
Long-range effects in an elementary cellular automaton\/}, Journal of
Statistical Physics 45 (1986), 27--39.

\bibitem[{GG1}]{GG1}
J.~Gravner, D.~Griffeath, {\it
First passage times for discrete threshold growth
dynamics\/}, Annals of Probability 24 (1996), 1752--1778.

\bibitem[{GG2}]{GG2}
J.~Gravner, D.~Griffeath, {\it
Cellular automaton growth on $\bZ^2$:
theorems, examples, and problems\/}, 
Advances in Applied Mathematics 21 (1998), 241--304.

\bibitem[{GG3}]{GG3}
J.~Gravner, D.~Griffeath, {\it Random growth models with polygonal shapes\/}, 
Annals of Probability 34 (2006), 181--218.

\bibitem[{GG4}]{GG4}
J.~Gravner, D.~Griffeath, {\it The one-dimensional Exactly 1 cellular automaton:
replication, periodicity, and chaos from finite seeds\/},
Journal of Statistical Physics 142 (2011), 168--200.

\bibitem[{GG5}]{GG5}
J.~Gravner, D.~Griffeath, {\it Robust periodic solutions and evolution from seeds in one-dimensional edge cellular automata\/},
Theoretical Computer Science 466 (2012), 64--86.

\bibitem[{GH}]{GH} J.~Gravner, A.~Holroyd, {\it 
Percolation and disorder-resistance in cellular automata\/},
Annals of Probability 43 (2015), 1731--1776. 


\bibitem[{Jen}]{Jen}
E.~Jen, {\it Exact solvability and quasiperiodicity of one-dimensional cellular automata\/}, 
Nonlinearity 4 (1991), 251--276.

\bibitem[{LN}]{LN}
W.~Li, M.~G.~Nordahl, {\it Transient behavior of cellular automaton rule 110\/}, 
Physics Letters A 166 (1992), 335--339.


\bibitem[{Mar}]{Mar}
G.~J.~Martinez, {\it A note on elementary cellular automata classification\/}, 
 Journal of Cellular Automata 8 (2013), 233--259 . 
 

\bibitem[{Moo}]{Moo} G.~Moore, {\it 
Floquet theory as a computational tool\/}, 	
SIAM Journal on Numerical Analysis 
42 (2004), 2522--2568.

 \bibitem[{MSZ}]{MSZ}
G.~J.~Martinez, J.~C.~Seck-Tuoh-Mora, H.~Zenil, {\it Computation and universality:
Class IV versus Class III cellular automata\/}, 
 Journal of Cellular Automata 7 (2012), 393--430 . 

\bibitem[{RS}]{RS}  F.~Rassoul-Agha, T.~Sepp\"al\"ainen, 
``A Course on Large Deviations with an Introduction to Gibbs Measures.'' 
American Mathematical Society, Graduate Studies in Mathematics Volume 162, 2015.  

\bibitem[{Sch}]{Sch} 
R.~H.~Schonmann, {\it Finite size scaling behavior of a biased majority rule cellular
automaton\/}, Physica A 167 (1990), 619--627.

\bibitem[{She}]{She} 
M.~A.~Shereshevsky, {\it 
Lyapunov exponents for one-dimensional cellular
automata\/},
Journal of Nonlinear Science 2 (1992), 1--8.

\bibitem[{Tis1}]{Tis1} 
P.~Tisseur, {\it Cellular automata and Lyapunov exponents\/}, Nonlinearity 13 (2000), 
1547--1560.

\bibitem[{Tis2}]{Tis2} 
P.~Tisseur, {\it
Always finite entropy and Lyapunov exponents
of two-dimensional cellular automata\/}.
{\tt arXiv:math/0502440}

\bibitem[{TM}]{TM} 
T.~Toffoli, N.~Margolus, 
``Cellular Automata Machines.''
MIT Press, 1991.

\bibitem[{Vic1}]{Vic1} G.~Vichniac, {\it
Cellular automata models of disorder and organization\/}, 
in ``Disordered Systems and Biological Organization,'' 
E.~Bienenstock, F.~Fogelman Souli\'e, 
G.~Weisbuch, eds., Springer (1986), pp.~3--20. 

\bibitem[{Vic2}]{Vic2} G.~Vichniac, {\it 
Boolean derivatives on cellular automata\/}, Physica D 45 (1990), 63--74.

\bibitem[{Wol1}]{Wol1}
S.~Wolfram, {\it Universality and complexity in cellular automata\/}, Physica D 10 (1984), 1--35.

\bibitem[{Wol2}]{Wol2}
S.~Wolfram, {\it The Wolfram Atlas: Elementary Cellular Automata\/},  \\
{\tt http://atlas.wolfram.com/01/01/}

\bibitem[{Wil}]{Wil}
S.~J.~Willson, {\it On convergence of configurations\/}, Discrete Mathematics 23 (1978), 279--300.

\bibitem[{Yil}]{Yil}
A.~Yilmaz, {\it 
Quenched large deviations for random walk in a random environment\/}, 
Communications on Pure and Applied Mathematics 62 (2009), 1033--1075.

 \bibitem[{ZV}]{ZV} 
  H.~Zenil, E.~Villareal-Zapata, 
  {\it Computation and universality:
Class IV versus Class III cellular automata\/}, 
 International Journal of Bifurcation and Chaos 23 (2013), 
 18 pages, DOI: 10.1142/S0218127413501599. 


\end{thebibliography}
\end{document}